\providecommand{\R}{}
\providecommand{\N}{}
\renewcommand{\R}{\mathbb{R}}
\renewcommand{\N}{{\mathbb N}}
\newcommand{\E}[1]{{\mathbf E}\left(#1\right)}										
\newcommand{\e}{{\mathbf E}}
\newcommand{\V}[1]{{\mathbf{Var}}\left\{#1\right\}}
\newcommand{\p}[1]{{\mathbf P}\left\{#1\right\}}
\newcommand{\I}[1]{{\mathbf 1}_{[#1]}}
\newcommand{\set}[1]{\left\{ #1 \right\}}
\newcommand{\Cprob}[2]{\mathbf{P}\set{\left. #1 \; \right| \; #2}} 
\newcommand{\probC}[2]{\mathbf{P}\set{#1 \; \left|  \; #2 \right. }}
\newcommand{\Cexp}[2]{\mathbf{E}\set{\left. #1 \; \right| \; #2}}
\newcommand\cB{\mathcal B}
\newcommand\cG{\mathcal G}
\newcommand\cH{\mathcal H}
\newcommand\cL{{\mathcal L}}
\newcommand\cM{\mathcal M}
\newcommand\cN{\mathcal N}
\newcommand\cS{{\mathcal S}}
\newcommand\cT{{\mathcal T}}
\newcommand\cU{{\mathcal U}}
\newcommand{\convdist}{\ensuremath{\stackrel{\mathrm{d}}{\rightarrow}}}
\newcommand{\convp}{\ensuremath{\stackrel{\mathrm{prob}}{\longrightarrow}}}
\newcommand\sumstar{\mathop{{\sum\nolimits^{\mathrlap{\star}}}}}
\newcommand{\pran}[1]{\left(#1\right)}
\providecommand{\eps}{}
\renewcommand{\eps}{\epsilon}
\providecommand{\ora}[1]{}
\renewcommand{\ora}[1]{\overrightarrow{#1}}
\newcommand\urladdrx[1]{{\urladdr{\def~{{\tiny$\sim$}}#1}}} % For title
\DeclareRobustCommand{\SkipTocEntry}[5]{} %For table of contents when using AMS styles. Change 5 to 4 if not using hyperref. 
\definecolor{clou}{rgb}{0.8,0.25,0.5125}
\newtheorem{thm}{Theorem}
\newtheorem{lem}[thm]{Lemma}
\newtheorem{prop}[thm]{Proposition}
\newtheorem{cor}[thm]{Corollary}
\newtheorem{fact}[thm]{Fact}
\newcommand{\dseq}{\mathrm{d}}
\newcommand{\cseq}{\mathrm{c}}
\newcommand{\dist}{\mathrm{dist}}
\newcommand{\rK}{\mathrm{K}}
\newcommand{\rH}{\mathrm{H}}
\newcommand{\tdeg}{a}
\newcommand{\gdeg}{b}
\newcommand{\urand}{\mathrm{U}}
\numberwithin{thm}{section}
\newcommand{\binprob}{\rho}
\providecommand{\parent}{}
\renewcommand{\parent}{\mathrm{par}}
\newcommand{\vertices}{\mathrm{v}}
\newcommand{\edges}{\mathrm{e}}
\begin{document}

\title{Random tree-weighted graphs} 
\author{Louigi Addario-Berry}
\author{Jordan Barrett}
\address{Department of Mathematics and Statistics, McGill University, Montr\'eal, Canada}
\email{louigi.addario@mcgill.ca}
\email{jordan.barrett@mail.mcgill.ca}
\date{August 27, 2020; revised January 22, 2021} %; revised ...
\urladdrx{http://problab.ca/louigi/}

%\keywords{<keywords>}
\subjclass[2010]{Primary: 60C05, 05C80; Secondary: 05C05, 60F05} 

%{60C05 (68P10,68W40)} %%{Primary: <subject>; Secondary: <subject>}
\begin{abstract} 
For each $n \ge 1$, let $\dseq^n=(d^{n}(i),1 \le i \le n)$ be a sequence of positive integers with even sum $\sum_{i=1}^n d^n(i) \ge 2n$. Let $(G_n,T_n,\Gamma_n)$ be uniformly distributed over the set of simple graphs $G_n$ with degree sequence $\dseq^n$, endowed with a spanning tree $T_n$ and rooted along an oriented edge $\Gamma_n$ of $G_n$ which is not an edge of $T_n$. Under a finite variance assumption on degrees in $G_n$, we show that, after rescaling, $T_n$ converges in distribution to the Brownian continuum random tree as $n \to \infty$. Our main tool is a new version of Pitman's additive coalescent \cite{MR1673928}, which can be used to build both random trees with a fixed degree sequence, and random tree-weighted graphs with a fixed degree sequence. As an input to the proof, we also derive a Poisson approximation theorem for the number of loops and multiple edges in the superposition of a fixed graph and a random graph with a given degree sequence sampled according to the configuration model; we find this to be of independent interest. 
\end{abstract}

\maketitle

%\tableofcontents

%%%%%%%%%%%%%%%

\section{\bf Introduction}\label{sec:intro} 

By a {\em rooted tree} we mean a labeled tree $t=(\vertices(t),\edges(t))$, with a distinguished root node denoted $r(t)$. 
A {\em tree-rooted graph} is a pair $(g,t,\gamma)$ where $g=(\vertices(g),\edges(g))$ is a labeled graph, $t=(\vertices(t),\edges(t))$ is a spanning tree of $g$, and $\gamma=uv$ is a distinguished oriented edge with $\{u,v\} \in \edges(g)\setminus \edges(t)$. We view $t$ as a rooted tree by setting $r(t)=u$. 

Throughout this work, we allow our graphs to have multiple edges and loops; in tree-rooted graphs, the root edge is allowed to be a loop. We say $(g,t,\gamma)$ is {\em simple} if $g$ is simple, i.e., if $g$ contains no multiple edges or loops. 

For a node $u$ of a rooted tree $t$, we write $c_t(u)$ for the number of children of $u$ in $t$. Given a rooted tree $t$ with $\vertices(t)=[n]:=\{1,2,\ldots,n\}$, the {\em child sequence} of $t$ is the sequence $\cseq_t=(c_t(i),1 \le i \le n)$. Similarly, given a tree-rooted graph $(g,t,\gamma)$ with vertex set $\vertices(g)=[n]$, the {\em degree sequence} of $(g,t,\gamma)$ is the sequence $(d_g(i),1 \le i \le n)$, where $d_g(i)$ is the number of endpoints of edges incident to $i$ in $g$; here loops are counted twice. 

For any sequence $\dseq=(d(1),\ldots,d(n))$ of non-negative integers, we define the {\em degree distribution} 
$p_{\dseq}=(p_{\dseq}(k),k \ge 1)$ of $\dseq$ by letting $p_{\dseq}(k) = \#\{i \in [n]: d(i)=k\}/n$. 

The following theorem contains our main result, which is an invariance principle for the spanning trees in random tree-rooted graphs with a fixed degree sequence. To state it, two further pieces of notation are needed. Given a finite graph $g=(v,e)$ and a constant $c >0$, we write $cg$ for the measured metric space $(v,\dist,\pi)$ whose points are the elements of $v$, with $\dist(x,y):= c\cdot \dist_g(x,y)$, where $\dist_g(x,y)$ denotes graph distance in $g$, and with $\pi$ the uniform probability measure on $v$. Also, for a sequence $p=(p(k),k \ge 1)$ of real numbers, we write $\mu_1(p):=\sum_{k\ge 1} kp(k)$ and $\mu_2(p):= \sum_{k \ge 1} k^2 p(k)$.
\begin{thm}\label{thm:main}%
For each $n \ge 1$ let $\dseq^{n}=(d^{n}(i),1 \le i \le n)$ be a degree sequence with $\min_{1 \le i \le n} d^n(i) \ge 1$, with $\sum_{i \in [n]} d^n(i) \ge 2n$ and with $\sum_{i \in [n]} d^n(i)$ even. Let $p^n$ be the degree distribution of $\dseq^n$. Suppose that there exists a probability distribution $p=(p(k),k \ge 1)$ such that (a) $p^n \to p$ pointwise and $p(2)<1$, and (b) $\mu_2(p^n) \to \mu_2(p) \in (0,\infty)$. Then there exists $\sigma=\sigma(p) \in (0,\infty)$ such that the following holds. 

For $n \ge 1$ let $(G_n,T_n,\Gamma_n)$ be chosen uniformly at random among all simple tree-rooted graphs with vertex set $[n]$ and degree sequence  $d^n$. Then
\[
\frac{\sigma}{n^{1/2}} T_n \convdist \cT
\]
as $n \to \infty$ with respect to the Gromov-Hausdorff-Prokhorov topology, where $\cT$ is the Brownian continuum random tree. 
\end{thm}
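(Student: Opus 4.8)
The plan is to pass to a configuration-model relaxation of the model, analyse its spanning tree by means of an additive-coalescent construction together with known invariance principles for trees with a prescribed degree sequence, and finally remove the multigraph defects by a Poisson-approximation argument.

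\textbf{Reduction to a multigraph model.} Write $m_n:=\tfrac12\sum_{i\in[n]}d^n(i)$ and let $s_n:=m_n-n+1\ge1$ denote the number of surplus (non-tree) edges. Let $(G_n',T_n',\Gamma_n')$ be the natural configuration-model version of the object: choose, at each vertex, which of its half-edges lie in the tree; pair the tree half-edges into a uniform spanning tree; pair the remaining $2s_n$ half-edges into the surplus graph $H_n$ by the configuration model; and orient one surplus edge. A counting argument shows that every \emph{simple} tree-rooted graph arises from exactly $\prod_i d^n(i)!$ configurations, so that $(G_n,T_n,\Gamma_n)$ is precisely $(G_n',T_n',\Gamma_n')$ conditioned on $G_n'$ being simple; it therefore suffices to prove $\tfrac{\sigma}{n^{1/2}}T_n'\convdist\cT$ jointly with the defect count $N_n$ (loops plus parallel-edge pairs of $G_n'=T_n'\uplus H_n$) converging to an \emph{independent} $\mathrm{Poisson}(\lambda)$, since by the Poisson-approximation step below one then has $\p{N_n=0}\to e^{-\lambda}>0$ and $\{N_n=0\}$ is an asymptotic continuity set for $N_n$, so conditioning on it does not change the limit of $\tfrac{\sigma}{n^{1/2}}T_n'$.

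\textbf{The coalescent decomposition.} Using a variant of Pitman's additive coalescent to build $(G_n',T_n',\Gamma_n')$ — or by a direct enumeration at the level of half-edge configurations — the law of $T_n'$ decomposes cleanly. Setting $e^n(i):=\deg_{T_n'}(i)-1\ge0$, the vector $(e^n(i))_{i\in[n]}$ has the law of a vector of independent $\mathrm{Bin}(d^n(i)-1,\tfrac12)$ random variables conditioned on $\sum_i e^n(i)=n-2$; and, conditionally on $(e^n(i))_i$, the tree $T_n'$ is \emph{uniform} among all labelled trees on $[n]$ with degree sequence $(e^n(i)+1)_i$, independently of the surplus structure (the root of $T_n'$, i.e.\ the tail of $\Gamma_n'$, being a surplus half-edge chosen with probability proportional to $d^n(i)-1-e^n(i)$, which plays no role for the metric limit). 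Thus $T_n'$ is a uniform tree with a prescribed, if random, degree sequence, and it remains to verify that this degree sequence meets the hypotheses of the known invariance principles for such trees (Broutin--Marckert, Haas--Miermont, Kortchemski, Addario-Berry, \dots) and to identify $\sigma$.

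\textbf{Asymptotics of the degree sequence and the CRT limit.} First, $\max_i d^n(i)=o(n^{1/2})$: since $p^n\to p$ pointwise and $\mu_2(p^n)\to\mu_2(p)<\infty$, a Scheff\'e-type argument gives $\sum_{k\ge1}k^2|p^n(k)-p(k)|\to0$, whence $\max_i d^n(i)^2\le n\sum_{k>K}k^2p^n(k)=o(n)$ on choosing $K$ large first. Second, exponential tilting together with a local limit theorem for $\sum_i\mathrm{Bin}(d^n(i)-1,\cdot)$ (whose variance is $\tfrac14(2m_n-n)=\Theta(n)$) shows that $(e^n(i))_i$ behaves like independent $\mathrm{Bin}(d^n(i)-1,\beta_n)$ with $\beta_n:=(n-2)/(2m_n-n)\to\beta:=1/(\mu_1(p)-1)\in(0,1]$; hence the empirical offspring distribution of $T_n'$ converges to the law of $\mathrm{Bin}(D-1,\beta)$ with $D\sim p$, and
\[
\frac1n\sum_{i\in[n]}\big(e^n(i)-1\big)^2\;\convp\;\varsigma^2\;:=\;(\mu_1(p)-1)\beta(1-\beta)+\beta^2\,\V{D}\;=\;\frac{\mu_2(p)-3\mu_1(p)+2}{(\mu_1(p)-1)^2},
\]
where $\varsigma^2>0$ precisely when $p(2)<1$ (using $\mu_1(p)\ge2$ and $\mu_2(p)\ge\mu_1(p)^2$, with equality throughout only if $p(2)=1$). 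Applying the invariance principle conditionally on $(e^n(i))_i$, on the (asymptotically sure) event that it satisfies these two conditions, and using that the limit $\cT$ does not depend on $(e^n(i))_i$, yields $\tfrac{\sigma}{n^{1/2}}T_n'\convdist\cT$ with $\sigma=\sigma(p)$ a fixed multiple of $\varsigma(p)$ determined by the normalisation convention for $\cT$ (namely $\sigma(p)=\varsigma(p)/2$ in the usual convention).

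\textbf{Removing the defects, and the main obstacle.} Conditionally on $T_n'=t$, the surplus graph is a configuration model on the degrees $f(i):=d^n(i)-1-e^n(i)$ superimposed on the fixed graph $t$, and I would invoke a Poisson-approximation theorem for the number of loops and parallel edges of such a superposition: $N_n$ is close in total variation to $\mathrm{Poisson}(\lambda_n(t))$, with $\lambda_n(t)$ explicit, depending on $t$ through $\tfrac1{4s_n}\sum_i f(i)(f(i)-1)$ and $\tfrac1{2s_n}\sum_{\{i,j\}\in\edges(t)}f(i)f(j)$. The one genuinely delicate point is to show $\lambda_n(T_n')\convp\lambda$ for a finite constant $\lambda$; this rests on adjacency statistics of the random tree, namely that the probability that two given vertices are adjacent in $T_n'$ is of order $(\deg_{T_n'}(i)+\deg_{T_n'}(j))/n$, so that $\mathbf E\big[\sum_{\{i,j\}\in\edges(T_n')}f(i)f(j)\big]$ is of order $\tfrac{s_n}{n}\sum_i f(i)\deg_{T_n'}(i)=O\big(s_n\,\mu_2(p)\big)$ — controlled by the second moment of $p$ rather than by its (possibly infinite) third moment, which is exactly why the finite-variance hypothesis suffices. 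Granting this, $N_n$ converges jointly with $\tfrac{\sigma}{n^{1/2}}T_n'$ to $\big(\mathrm{Poisson}(\lambda),\cT\big)$ with independent coordinates, $\p{N_n=0}\to e^{-\lambda}>0$, and conditioning on $\{N_n=0\}$ leaves the limit of $\tfrac{\sigma}{n^{1/2}}T_n'$ unchanged, giving $\tfrac{\sigma}{n^{1/2}}T_n\convdist\cT$. I expect the main obstacles to be (a) setting up the additive-coalescent construction so that the decomposition above holds exactly at the level of graphs, correctly handling the distinguished oriented surplus edge and the tree structure; and (b) this control of $\lambda_n(T_n')$, i.e.\ understanding the adjacency structure of random trees with a given degree sequence well enough to promote the conditional Poisson approximation to a joint scaling limit. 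Everything else is, modulo careful bookkeeping, an application of existing machinery.
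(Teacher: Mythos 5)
Your proposal follows essentially the same route as the paper: the coalescent/configuration-model relaxation with the simple model recovered by conditioning, the identification of the tree's child sequence as a hypergeometric (conditioned-binomial) projection of $\dseq^n$ with the tree uniform given that sequence, the Broutin--Marckert invariance principle with the same limiting offspring law $\mathrm{Bin}(D-1,1/(\mu_1(p)-1))$ and the same variance $(\mu_2(p)-3\mu_1(p)+2)/(\mu_1(p)-1)^2=\mu_2(q)-1$, and a Poisson approximation for the superposition of the fixed tree and the surplus configuration model, whose delicate input (concentration of $\sum_{uv\in\edges(T)}f(u)f(v)$, handled in the paper via the exchangeability of the coalescent) you correctly single out. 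The only point you do not anticipate is the degenerate case $\mu_1(p)=2$, where the surplus graph has $o(n)$ edges, the generic Poisson approximation does not apply (its hypotheses force a non-degenerate surplus degree distribution), and the paper runs a separate first/second-moment argument to show the simplicity probability tends to $1$.
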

We refer the reader to \cite{MR3035742} for a good discussion of the Gromov-Hausdorff-Prokhorov topology aimed at probabilists. 
The technical insight underlying the proof of Theorem~\ref{thm:main} is the fact that  Pitman's additive coalescent~\cite{MR1673928} can be modified to yield a simple construction procedure for random tree-weighted graphs with a given degree sequence. We anticipate that this procedure has further interesting features to be explored. 

\subsection{Related work}
The enumerative combinatorics of tree-rooted maps was developed in the 1960's and 1970's \cite{MR205882,MR314687}. The area has seen renewed attention over the last decade or so \cite{MR3366469,MR2438581,MR2285813}. 
Random tree-rooted maps can be interpreted as samples from a Fortuin-Kastelyn model at zero temperature, and are an active object of study in the planar probability community (see, e.g.,\cite{MR4126936,MR3861296,MR4010949,MR3778352,MR3681382,gwynne2017,li2017}). 

There has also been some work on the typical number of spanning trees in uniformly random graphs \cite{MR657198,MR3645782,MR3177540} with given degree sequences. (In such models, the underlying graph is sampled uniformly at random from some set of allowed graphs; in our model, it is the tree-weighted graph which is uniformly random, which means the underlying measure on graphs is biased in favour of graphs with a greater number of spanning trees.) 

Except in the setting of graphs on surfaces, we have not found any previous work on tree-weighted graphs, random or otherwise.

\subsection{Overview of the proof}
We begin with a small number of facts and definitions that are required for the overview. We say a sequence $\cseq=(c(i),1 \le i \le n)$ of non-negative integers is a child sequence if it is the child sequence of some tree. Note that $\cseq$ is a child sequence if and only if $\sum_{1 \le i \le n} c(i)=n-1$, in which case 
\begin{equation}\label{eq:treecount}
\#\{\mbox{rooted trees}~t: \cseq_t=\cseq\} 
= {n-1 \choose c(1), \dots ,c(n)} = \frac{1}{n} \frac{n!}{\prod_{i=1}^n c(i)!}\, ;
\end{equation}
see \cite{MR0274333}, Section 3.3.
 
Given any sequence $\mathrm{c}=(c(i),1 \le i \le n)$ of non-negative integers, for $k \ge 1$ we write $Q_{\mathrm{c}}(k) = \#\{i \in [n]: c(i)=k\}$. We call $Q_{\mathrm{c}}=(Q_{\mathrm{c}}(k),k \ge 0)$ the {\em child statistics vector} of $\cseq$. For a tree $t$ with child sequence $\cseq_t$, we will  sometimes write $Q_t=Q_{\cseq_t}$ for succinctness. 

Given a graph $g$, for an edge $e \in \edges(g)$ we write $m_g(e)$ for the multiplicity of edge $e$ in $g$. Given a degree sequence $\dseq=(d(1),\ldots,d(n))$, the classical {\em configuration model} \cite[Chapter 7]{MR3617364} produces a random graph $G$ such that 
for any fixed graph $g$ with degree sequence $\dseq$, 
\begin{equation}\label{eq:cm}
\p{G=g} \propto \frac{1}{\prod_{i=1}^n 2^{m_g(ii)} \prod_{e \in \edges(g)} m_g(e)!}\, .
\end{equation}
In Section~\ref{sec:additive_coal}, we define a sampling procedure, inspired by the configuration model and by Pitman's additive coalescent~\cite{MR1673928}, which produces a random tree-weighted graph $(G,T,\Gamma)$ with the property that for any fixed tree-weighted graph $(g,t,\gamma)$ with degree sequence $\dseq$, 
\begin{equation}\label{eq:twglaw}
\p{(G,T,\Gamma)=(g,t,\gamma)} \propto \frac{2^{\I{\gamma~\mathrm{is~a~loop}}}\cdot m_{g-t}(\gamma)}{\prod_{i=1}^n 2^{m_{g-t}(ii)} \cdot \prod_{e \in \edges(g)} m_{g-t}(e)!}\, ,
\end{equation}
where $g-t$ is the graph with the same vertex set as $g$ and with edge multiplicities given by
\[
m_{g-t}(e) = \begin{cases}
				m_g(e)	& \mbox{ if } e \not \in \edges(t) \\
				m_g(e)-1& \mbox{ if }e \in \edges(t)\, .
			\end{cases}
\]

We call a random tree-weighted graph $(G,T,\Gamma)$ with distribution given by (\ref{eq:twglaw}) a {\em random tree-weighted graph with degree sequence $\dseq$}. Note that in this case, conditionally given that $G$ is simple, $(G,T,\Gamma)$ is uniformly distributed over simple tree-rooted graphs with degree sequence $\dseq$.

The sampling procedure we use has enough exchangeability that, conditional on its child sequence, the resulting spanning tree $T$ is uniformly distributed; that is, for any fixed child sequence $\cseq=(c(i),1 \le i \le n)$, and any tree $t$ with $\cseq_t=\cseq$, 
\[
\probC{T=t}{\cseq_T=\cseq} = {n-1 \choose 
c(1),\dots,c(n)}^{-1}\, .
\]

Now let $(\dseq^n,n \ge 1)$ be a sequence of degree sequences satisfying the conditions of Theorem~\ref{thm:main}; for each $n$ let $(G(\dseq^n),T(\dseq^n),\Gamma(\dseq^n))$ be a random tree-weighted graph with degree sequence $\dseq^n$. 
We prove (see Proposition~\ref{prop:degree_concentration}) that 
there is a probability distribution $q=(q(i),i \ge 0)$ with $\mu_2(q)<\infty$ such that the child statistics vector $Q_{T(\dseq^n)}$ satisfies that
$n^{-1}Q_{T(\dseq^n)}(\tdeg) \to q(\tdeg)$ in probability for all $\tdeg \ge 0$, and moreover that $\mu_2(n^{-1}Q_{T(\dseq^n)}) \to \mu_2(q)$ in probability. It then follows from a result of Broutin and Marckert \cite{MR3188597} that 
\begin{equation}\label{eq:weak_conv}
\frac{\sigma}{n^{1/2}} T(\dseq^n) \convdist \cT
\end{equation}
in the Gromov-Hausdorff-Prokhorov sense, where $\sigma^2 = \mu_2(q)-1$ and $\cT$ is the Brownian continuum random tree.

This is not quite the convergence claimed in Theorem~\ref{thm:main}, because 
$(G(\dseq^n),T(\dseq^n),\Gamma(\dseq^n))$ is a random tree-weighted graph with degree sequence $\dseq^n$, whereas Theorem~\ref{thm:main} concerns random {\em simple} tree-weighted graphs. To obtain Theorem~\ref{thm:main} from (\ref{eq:weak_conv}), we show that there is $\alpha \in (0,1]$ such that as $n \to \infty$, 
\begin{equation}\label{eq:simplicity_constant}
\probC{G(\dseq^n)\mbox{ is simple}}{T(\dseq^n)} \to \alpha
\end{equation}
in probability. The value of (\ref{eq:simplicity_constant}), informally, is that it implies that conditioning $G(\dseq^n)$ to be simple has an asymptotically negligible effect on the law of $T(\dseq^n)$. Since the law of $(G(\dseq^n),T(\dseq^n),\Gamma(\dseq^n))$, conditional on the simplicity of $G(\dseq^n)$, is uniform over simple tree-weighted graphs with degree sequence $\dseq^n$, we can then conclude straightforwardly. 

We finish the overview with a brief discussion of how we prove (\ref{eq:simplicity_constant}).
Our procedure for constructing random tree-weighted graphs with a given degree sequence first constructs the tree $T(\dseq^n)$, then randomly pairs the remaining half-edges as in the standard configuration model. Viewing $T(\dseq^n)$ as fixed, this leads us to the following more general question. Let $G=(V,E)$ be a random graph with a given degree sequence generated according to the configuration model, and let $T=(V,E')$ be a fixed, simple graph with the same vertex set. What is the probability that the union of $G$ and $T$ forms a simple graph (i.e. that $G$ is a simple graph and that $E$ and $E'$ are disjoint)? We provide a partial answer to this question by proving a fairly general Poisson approximation theorem for the number of loops and multiple edges in the superposition of a fixed graph and a random graph drawn from the configuration model (see Theorem~\ref{Poisson Asymptotics}). 
In order to apply Theorem~\ref{Poisson Asymptotics}, we need that the joint degree statistics in $G(\dseq^n)$ and in $T(\dseq^n))$ are sufficiently well-behaved; proving this is the task of Section~\ref{sec:concentration}. We then state and prove Theorem~\ref{Poisson Asymptotics} in Section~\ref{sec:poisson_approx}, and finally put all the pieces together to prove Theorem~\ref{thm:main} in Section~\ref{sec:finalproof}.
\begin{figure}[htb]
\includegraphics[width=0.9\textwidth]{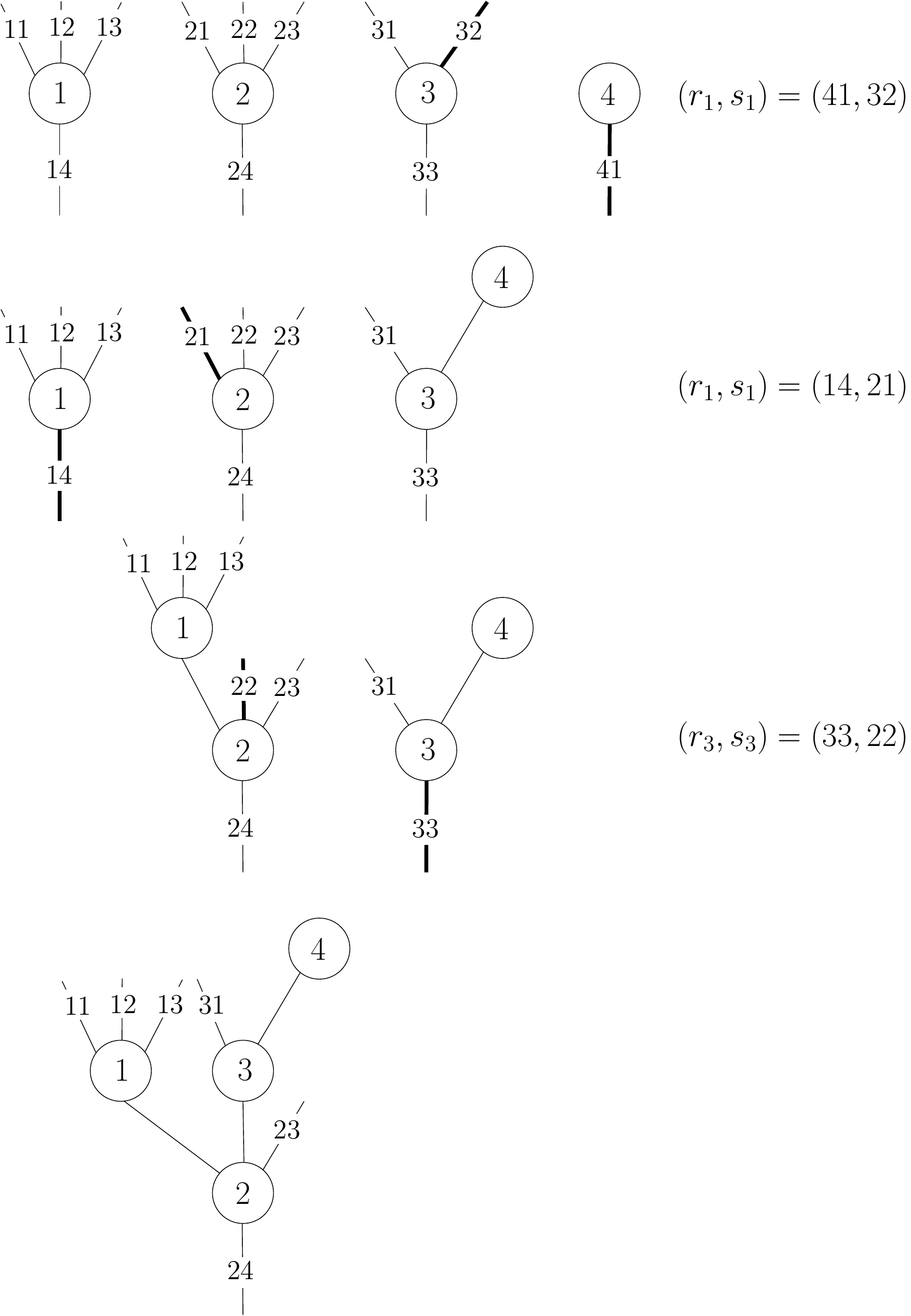}
\caption{An example of an execution path of Pitman's additive coalescent. The forests $F_1,F_2,F_3$ and $F_4$ are displayed in successive rows.}
\label{fig:coalescent_example}
\end{figure}
\medskip

\section{Pitman's additive coalescent with a fixed degree sequence}\label{sec:additive_coal}

\subsection{The sampling process}
Let $\dseq=(d(1),\ldots,d(n))$ be a degree sequence, which is to say that $d(1),\ldots,d(n)$ are non-negative integers. 
To be well-defined, the next process requires that $\sum_{1 \le i \le n} d(i) \ge 2n-1$ and that $d(i) \ge 1$ for all $1 \le i \le n$. 
{
\begin{mdframed}[style=thmbox]
{\bf Pitman's additive coalescent.}
The process has $n-1$ steps, and at the start of step $k$ consists of a rooted forest $F_k(\dseq)=\{T_1^{k}(\dseq),\ldots,T_{n+1-k}^{k}(\dseq)\}$ with $n+1-k$ trees. At the start of step $1$, these trees are isolated vertices with labels $1,\ldots,n$.  Vertex $i$ has $d(i)$ half-edges $(i1,i2,\ldots,id(i))$ attached to it, and $id(i)$ is distinguished as the {\em root half-edge}.

\noindent {\em Step k}: 

Choose a uniformly random pair $(r_k,s_k)$, where $r_k$ is a root half-edge which is not paired in $F_k(\dseq)$ and $s_k$ is a non-root half-edge which is not paired in $F_k(\dseq)$ and additionally belongs to a different tree of $F_k(\dseq)$ from $r_k$. 

Pair the half-edges $r_k$ and $s_k$ to create an edge $e_k$ connecting their endpoints; this merges two trees of $F_k(\dseq)$. 
The root of the new tree is the same as the root of the tree of $F_k(\dseq)$ containing $s_k$. In the new tree, the vertex incident to $r_k$ is the child of the vertex incident to $s_k$. 

Define $F_{k+1}(\dseq)$ to be the forest consisting of the new tree thus created, together with the remaining $n-k-1$ unaltered trees of $F_k(\dseq)$. 
\end{mdframed}
}
\medskip 

An example is shown in Figure~\ref{fig:coalescent_example}.
Write $T(\dseq) = T^n_1(\dseq)$ for the single tree in the random forest $F_n(\dseq)$. Attached to the tree $T(\dseq)$ there is a single pendant (unpaired) root half-edge which is incident to the root of $T(\dseq)$, and if $\sum_{i=1}^n d(i) > 2n-1$ then there are also other pendant half-edges. By ignoring pendant half-edges, we may view $T(\dseq)$ as a random rooted tree with vertex set $[n]$. 

It will be useful to additionally define two edge labellings of $T(\dseq)$, denoted $\rK$ and $\rH$. We define $\rK(e)$ to be the step at which edge $e$ was added; so $\rK(e_k):= k$. We define $\rH(e)$ to be the non-root half-edge used in creating $e$; so  $\rH(e_k)=s_k$. Note that $\rK$ is a bijection between $\edges(T(\dseq))$ and $[n-1]$. Also, if $i \in [n]$ has $c_{T(\dseq)}(i)=c$, then $\rH$ assigns $c$ distinct half-edges from the set $\{i1,\ldots,i(d(i)-1)\}$ to the edges between $i$ and its children in $T(\dseq)$. 

We use the phrase ``execution path'' to mean a sequence of pairs $(r_1,s_1),\ldots,(r_{n-1},s_{n-1})$ which may concievably appear as the ordered sequence of pairs of half-edges added during the course of Pitman's coalescent. 

The next proposition fully describes the joint distribution of $T(\dseq)$, $\rK$, and $\rH$. 
In its proof, and in what follows, for a rooted tree $t$ and a node $u \in \vertices(t)\setminus\{r(t)\}$ we write $\parent(u)$ for the parent of $u$ in $t$. Also, we use the falling factorial notation $(k)_\ell:=k(k-1)\cdot\ldots\cdot (k-\ell+1)=k!/(k-\ell)!$. 
\begin{prop}\label{prop:tree_formula} 
Let $\dseq=(d(1),\ldots,d(n))$ be a degree sequence with $d(i) \ge 1$ for all $i \in [n]$ and with $\sum_{i=1}^n d(i) \ge 2(n-1)$, and write $m=\frac{1}{2} \sum_{i=1}^n d(i)$. (Note: we allow that $\sum_{i=1}^n d(i)$ is odd.) Then the following properties all hold. 
\begin{enumerate}
\item For any fixed rooted tree $t$ with vertex set $[n]$, 
\[
\p{T(\dseq)=t} = 
\frac{1}{(2m-n)_{n-1}} \prod_{i=1}^n (d(i)-1)_{c_t(i)}\, .
\]
\item Fix any set $\cH \subset \bigcup_{i=1}^n \{i1,\ldots,i(d(i)-1)\}$ with $|\cH|=n-1$. 
Conditionally given that $\{s_1,\ldots,s_{n-1}\}=\cH$, the triple 
$(T(\dseq),\rK,\rH)$ is uniformly distributed over the $((n-1)!)^2$  triples which are consistent with the event $\{s_1,\ldots,s_{n-1}\}=\cH$. 
\item The sequence $(s_1,\ldots,s_{n-1})$ of non-root half-edges, added by Pitman's coalescent, is uniformly distributed over the set of sequences of $(n-1)$ distinct elements of $\bigcup_{1 \le i \le n} \{i1,\ldots,i(d(i)-1)\}$. Consequently, $\{s_1,\ldots,s_{n-1}\}$ is a uniformly random size-$(n-1)$ subset of $\bigcup_{1 \le i \le n} \{i1,\ldots,i(d(i)-1)\}$. 
\item 
Finally, conditionally given that $T(\dseq)=t$ and given the set $\{s_1,\ldots,s_{n-1}\}$ of non-root half-edges added by Pitman's coalescent, the ordering $(e_1,\ldots,e_{n-1})$ of $\edges(t)$ is uniformly distributed over the $(n-1)!$ possible orderings of $\edges(t)$. 
\end{enumerate}
\end{prop}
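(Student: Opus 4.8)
The plan is to reduce the whole proposition to a single structural fact: at every step of the coalescent the number of legal choices of $(r_k,s_k)$ is the same regardless of the current forest. To establish this I would first record the invariant that, at the start of step $k$, the forest $F_k(\dseq)$ contains exactly one unpaired root half-edge per tree — so $n+1-k$ of them — and exactly $2m-n-(k-1)$ unpaired non-root half-edges, both counts being independent of the execution history: the first holds because a merge retires the root half-edge $r_k$ while keeping the root, hence the still-unpaired root half-edge, of the tree of $s_k$; the second because each step pairs exactly one non-root half-edge. Now if $r_k$ lies in a tree $\tau$ carrying $u_\tau$ unpaired non-root half-edges, there are $(2m-n-k+1)-u_\tau$ admissible $s_k$; summing over the $n+1-k$ choices of $r_k$ and using $\sum_\tau u_\tau=2m-n-k+1$ collapses the count to $(n-k)(2m-n-k+1)$, free of the configuration. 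Hence every execution path occurs with the same probability,
\[
\prod_{k=1}^{n-1}\frac{1}{(n-k)(2m-n-k+1)}=\frac{1}{(n-1)!\,(2m-n)_{n-1}}\, .
\]

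Next I would set up a bijection between execution paths and triples $(t,\rK,\rH)$, where $t$ is a rooted tree on $[n]$, $\rK$ is a linear order on $\edges(t)$, and $\rH$ assigns to each edge $e=\{u,\parent(u)\}$ (with $u$ the child) a non-root half-edge of $\parent(u)$, subject to edges with a common parent receiving distinct half-edges. From a path one reads off $t$ (the vertex carrying $r_k$ becomes the child of the vertex carrying $s_k$), the order $\rK(e_k)=k$, and $\rH(e_k)=s_k$; conversely $r_k$ is forced, being the root half-edge of the child-endpoint of $\rK^{-1}(k)$, and one checks that the reconstructed path is always conceivable — the facts needed are that each non-root vertex has a unique parent-edge, so its root half-edge stays unpaired precisely until that parent-edge is inserted, and that $u$ and $\parent(u)$ lie in different components of the partial forest until $e_k$ is added (in $t$ the only path between them is $e_k$). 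Combined with the equiprobability of execution paths, this shows that, conditionally on any event measurable with respect to $(t,\rK,\rH)$, the triple is uniform over the consistent possibilities.

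Part~(1) then follows by counting triples with $t$ fixed: there are $(n-1)!$ orders $\rK$, and for each vertex $i$ the restriction of $\rH$ to the $c_t(i)$ edges from $i$ to its children is an injection into the $d(i)-1$ non-root half-edges of $i$, these restrictions being chosen independently, contributing $\prod_{i=1}^n (d(i)-1)_{c_t(i)}$; multiplying by the common path probability gives the asserted formula. For part~(2), fix $\cH$ with $|\cH|=n-1$ and put $h_i=|\cH\cap\{i1,\dots,i(d(i)-1)\}|$; a triple is consistent with $\{s_1,\dots,s_{n-1}\}=\cH$ precisely when $\rH$ maps $\edges(t)$ bijectively onto $\cH$, and since $\rH(e)$ lies at the parent-endpoint of $e$ this forces $c_t(i)=h_i$ for every $i$, so there are $\binom{n-1}{h_1,\dots,h_n}$ admissible trees by~\eqref{eq:treecount}, $\prod_i h_i!$ admissible $\rH$, and $(n-1)!$ orders, i.e.\ $((n-1)!)^2$ triples, all equally likely. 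For part~(3), given any sequence $(s_1,\dots,s_{n-1})$ of distinct non-root half-edges, step $k$ leaves exactly $n-k$ legal values of $r_k$ (any root half-edge outside the current tree of $s_k$), so exactly $(n-1)!$ execution paths carry that particular $s$-sequence — the same number for every sequence — and equiprobability yields uniformity; the statement about the set $\{s_1,\dots,s_{n-1}\}$ is then immediate. Part~(4) follows from~(2): conditioning further on $T(\dseq)=t$ retains the $(n-1)!\,\prod_i h_i!$ triples with that tree, and each order $\rK$, equivalently each ordering $(e_1,\dots,e_{n-1})$ of $\edges(t)$, occurs in exactly $\prod_i h_i!$ of them, hence uniformly.

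I expect the only genuine obstacle to be the first step — verifying that the number of legal pairs is forest-independent, which rests on the cancellation $\sum_\tau u_\tau=2m-n-k+1$ and on correctly tracking which half-edges remain unpaired. Everything after that is bookkeeping through the bijection. (One should also note that the borderline case $\sum_i d(i)=2(n-1)$, in which the process cannot run to completion, is degenerate and is to be excluded or read as vacuous; once $\sum_i d(i)\ge 2n-1$ the falling factorial $(2m-n)_{n-1}$ is a positive integer and all the formulas are meaningful.)
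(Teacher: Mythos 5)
Your proposal is correct and follows essentially the same route as the paper: show that all execution paths are equiprobable because the number of legal pairs $(r_k,s_k)$ at each step is configuration-independent, identify execution paths with triples $(t,\rK,\rH)$, and count. The only differences are cosmetic --- you count legal pairs by fixing $r_k$ first (the paper fixes $s_k$ first, which avoids your cancellation $\sum_\tau u_\tau = 2m-n-k+1$), and in part (2) you enumerate consistent triples directly via (\ref{eq:treecount}) where the paper enumerates orderings of $\cH$ --- while your explicit verification that every triple is realized by a valid execution path, and your remark on the degenerate boundary case $\sum_i d(i)=2(n-1)$, are careful touches the paper leaves implicit.
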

\begin{proof}
At step $i$ of the process, there are $n+1-i$ components and $2m-n+1-i$ unpaired non-root half-edges. We may specify the pair $(r_i,s_i)$ by first revealing the non-root half-edge $s_i$, then revealing $r_i$. Whatever the choice of $s_i$, there are $n-i$ possibilities for $r_i$, so the number of distinct choices for the pair $(r_i,s_i)$ is $(2m-n+1-i) (n-i)$. 
Thus, the total number of possible execution paths for the process is 
\begin{equation}\label{eq:execution_path_count}
\prod_{i=1}^{n-1} (2m-n+1-i) (n-i) = (n-1)! (2m-n)_{n-1}. 
\end{equation}

The execution path followed by the process is uniquely determined by the tree $T(\dseq)$ and the functions $\rK:\edges(T(\dseq)) \to [n-1]$ and $\rH:\edges(T(\dseq)) \to \bigcup_{i=1}^n \{i1,\ldots,i(\dseq_i-1)\}$. To see this, fix any $k \in [n-1]$. Then the edge $e_k$ created at step $k$ of Pitman's coalescent may be recovered as $e_k = \rK^{-1}(k)$; and, if $e_k=uv$ with $v=\parent(u)$ then the half-edges paired to create $e_k$ are the root half-edge $vd(v)$ incident to $v$ and the half-edge $\rH^{-1}(e_k)$. 

Now, fix any tree $t$ with degree sequence $\dseq$, any bijection $\mathrm{k}:\edges(t) \to [n-1]$, and any function $\mathrm{h}:\edges(t) \to \N$ which, for all $i \in [n]$, assigns $c_t(i)$ distinct values from the set $\{1,\ldots,(d(i)-1)\}$ to the edges between $i$ and its children in $t$. Together with (\ref{eq:execution_path_count}), the observation of the preceding paragraph implies that 
\[
\p{T(\dseq)=t,\rK=\mathrm{k},\rH=\mathrm{h}} = \frac{1}{(n-1)! (2m-n)_{n-1}}\, .
\]

Having fixed the tree $t$, the number of possible values for $\rK$ is $(n-1)!$ and the number of possible values for $\rH$ is $\prod_{i \in [n]} (d(i)-1)_{c_t(i)}$. It follows that 
\[
\p{T(\dseq)=t} = \frac{(n-1)!\cdot \prod_{i \in [n]} (d(i)-1)_{c_t(i)}}{(n-1)! (2m-n)_{n-1}} = 
\frac{\prod_{i \in [n]} (d(i)-1)_{c_t(i)}}{(2m-n)_{n-1}}
\, ,
\]
which proves the first claim of the proposition. 

Next, fix $\cH$ as in the second assertion of the proposition, 
and any ordering of $\cH$ as $(h_1,\ldots,h_{n-1})$. Then the number of execution paths which yield that $s_k=h_k$ for $k \in [n-1]$ is precisely $(n-1)!$. To see this, note that if $s_j=h_j$ for $1 \le j \le k$ then, whatever the choices of the root half-edges $(r_j,1 \le j \le k)$, the forest $F^n_k$ has $n+1-k$ component trees so there are $n-k$ unpaired root half-edges in components different from that of $s_k$; any such root half-edge may be chosen as $r_k$. Since there are also $(n-1)!$ possible orderings of $\cH$, the second assertion of the proposition follows. 

To prove the third statement, fix a set $\cH$ and an ordering $(h_1,\ldots,h_{n-1})$ of its elements, as in the previous paragraph. 
For each $1 \le k < n-1$, given that $s_j=h_j$ for $1 \le j < k$, whatever the choices of $(r_j,1 \le j < k)$ may be, there are $n-k$ ways to choose $r_k$ in a distinct tree from $h_k$. It follows that there are $\prod_{k=1}^{n-2} (n-k) = (n-1)!$ execution paths with the property that $s_k=h_k$ for each $1 \le k \le n-1$. 
Since this number does not depend on $\cH$, it follows that each size-$(n-1)$ subset of $\bigcup_{1 \le i \le n} \{i1,\ldots,i(d(i)-1)\}$ is equally likely. 

Finally, fix both the tree $t$ and an unordered set 
$\cH$ of non-root half-edges with $|\cH \cap \{i1,\ldots,i(d(i)-1)\}|=c_t(i)$ for all $i \in [n]$. We consider the number of execution paths which yield $T(\dseq)=t$ and $\{s_1,\ldots,s_{n-1}\}=\cH$. The number of choices of an ordering function $\mathrm{k}:\edges(t) \to [n-1]$ consistent with these constraints is still $(n-1)!$. Moreover, whatever the choice of $\mathrm{k}$, under the further constraint $\rK=\mathrm{k}$, the number of possibilities for $\rH$ is $\prod_{i \in [n]} c_t(i)!$. To see this, note that for each $i \in [n]$, the constraints precisely imply that   $\cH \cap \{i1,\ldots,i(d(i)-1)\}=\{s_1,\ldots,s_{n-1}\} \cap \{i1,\ldots,i(d(i)-1)\}$, and $\rH$ is fixed once we additionally specify which of these $c_t(i)$ half-edges is matched to which child of $i$, for each $i \in [n]$. It follows that the number of execution paths which yield that $T(\dseq)=t$, that $\rK=\mathrm{k}$ and that $\{s_1,\ldots,s_n\}=\cH$ is 
\[
\prod_{i=1}^n c_t(i)!\, . 
\]
As this quantity doesn't depend on the choice of the ordering function $\mathrm{k}$, the final assertion of the proposition follows. 
\end{proof}

We state a corollary of the above proposition, for later use. 
\begin{cor}
The tree $T(\dseq)$ is a uniformly random rooted tree with child sequence $\cseq_T$.
\end{cor}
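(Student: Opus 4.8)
The plan is to derive the corollary directly from part (1) of Proposition~\ref{prop:tree_formula}, combined with the tree-counting formula~\eqref{eq:treecount}. The key observation is that the probability $\p{T(\dseq)=t}$ given in part (1) depends on $t$ only through its child sequence $\cseq_t$: indeed, $\prod_{i=1}^n (d(i)-1)_{c_t(i)}$ is determined once we know $c_t(i)$ for each $i \in [n]$, and the normalizing factor $(2m-n)_{n-1}$ does not depend on $t$ at all. Hence $T(\dseq)$ assigns equal probability to any two trees with the same child sequence.

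Concretely, I would fix a child sequence $\cseq=(c(i),1 \le i \le n)$ with $\sum_{i \in [n]} c(i) = n-1$ (so that, by the discussion preceding~\eqref{eq:treecount}, it is realizable as the child sequence of some tree), and condition on the event $\{\cseq_{T(\dseq)}=\cseq\}$. For any tree $t$ with $\cseq_t = \cseq$, part (1) gives
\[
\p{T(\dseq)=t} = \frac{1}{(2m-n)_{n-1}} \prod_{i=1}^n (d(i)-1)_{c(i)}\, ,
\]
a quantity independent of the particular $t$. Dividing by $\p{\cseq_{T(\dseq)}=\cseq}$, which is a sum of such equal terms over all trees with child sequence $\cseq$, yields
\[
\probC{T(\dseq)=t}{\cseq_{T(\dseq)}=\cseq} = \frac{1}{\#\{\mbox{rooted trees }t': \cseq_{t'}=\cseq\}} = {n-1 \choose c(1),\dots,c(n)}^{-1}\, ,
\]
where the last equality is~\eqref{eq:treecount}. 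This is precisely the uniform distribution on rooted trees with child sequence $\cseq$, which is the assertion of the corollary; it also recovers the conditional-uniformity statement displayed in the overview.

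I do not expect any genuine obstacle here — the corollary is essentially a restatement of part (1) once one notices the product formula is a function of the child statistics alone. The only point requiring a word of care is the hypothesis: Proposition~\ref{prop:tree_formula} requires $\sum_{i=1}^n d(i) \ge 2(n-1)$ and $d(i) \ge 1$ for all $i$, so the corollary inherits these standing assumptions on $\dseq$ (which hold throughout the relevant applications), and one should also note that the event $\{\cseq_{T(\dseq)}=\cseq\}$ has positive probability exactly when $\cseq$ is a valid child sequence with $c(i) \le d(i)-1$ for each $i$, so that the conditioning is well-defined.
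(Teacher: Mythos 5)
Your proposal is correct and follows exactly the paper's argument: the formula for $\p{T(\dseq)=t}$ in Proposition~\ref{prop:tree_formula}~(1) depends on $t$ only through $\cseq_t$, so conditioning on the child sequence yields the uniform distribution. The paper states this in one line; your added computation via~\eqref{eq:treecount} just makes the same observation explicit.
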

The corollary follows since the formula for $\p{T(\dseq)=t}$ from Proposition~\ref{prop:tree_formula}  only depends on $t$ through $\cseq_t$. 

We now assume that $\sum_{i=1}^n d(i) \ge 2n$ and that $\sum_{i=1}^n d(i)$ is even, and define a random tree-rooted graph $(G,T,\Gamma)=(G(\dseq),T(\dseq),\Gamma(\dseq))$ as follows: First, let $T=T(\dseq)$ be the random tree built by Pitman's coalescent, and let $\Gamma^+=\Gamma^+(\dseq)$ be its root half-edge. We refer to $T$ as the {\em spanning tree-elect} of a to-be-constructed tree-rooted graph. 
Next, choose a uniformly random matching of the $2m-2(n-1)$ pendant half-edges attached to $T$, and pair the half-edges according to this matching to create $G=G(\dseq)$. Then let  $\Gamma$ be the edge containing $\Gamma^+$, oriented so that $\Gamma^+$ is at the head; for later use, let $\Gamma^-=\Gamma^-(\dseq)$ be the other half-edge of $\Gamma$. We call $(G(\dseq),T(\dseq),\Gamma(\dseq))$, or any other graph with the same distribution, a {\em random tree-rooted graph with degree sequence $\dseq$}. The tree $T$ has now taken office. 

The next proposition describes the distribution of $(G(\dseq),T(\dseq),\Gamma(\dseq))$.  For a tree-rooted graph $(g,t,\gamma)$, 
\begin{prop}\label{eq:gtg_dist}
Let $\dseq=(d(1),\ldots,d(n))$ be a degree sequence with $d(i) \ge 1$ for all $i \in [n]$, and write $m=\frac{1}{2} \sum_{i=1}^n d(i)$. 
Fix a tree-rooted graph $(g,t,\gamma)$ where $g$ is a graph with degree sequence $\dseq$. Then
\[
\p{(G(\dseq),T(\dseq),\Gamma(\dseq))=(g,t,\gamma)} \propto \frac{2^{\I{\gamma~\mathrm{is~a~loop}}}\cdot m_{g-t}(\gamma)}{\prod_{i=1}^n 2^{m_{g-t}(ii)} \cdot \prod_{e \in \edges(g)} m_{g-t}(e)!}\, .
\]
\end{prop}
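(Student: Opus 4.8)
The plan is to condition on the tree-elect $T=T(\dseq)$ and then analyze the uniform random matching $M$ of the pendant half-edges that produces $G$. I would begin by recording the relevant combinatorics of the coalescent: in the tree $t$, every non-root vertex $i$ uses its root half-edge $id(i)$ for the edge to its parent and uses $c_t(i)$ of the half-edges $\{i1,\dots,i(d(i)-1)\}$ for edges to its children, and so is left with $\delta(i):=d(i)-1-c_t(i)$ pendant half-edges; the root $u:=r(t)$ uses $c_t(u)$ half-edges for its children and none for a parent, leaving $\delta(u):=d(u)-c_t(u)$ pendant half-edges, one of which is $\Gamma^+$. One checks that $\sum_i\delta(i)=2m-2(n-1)$, that $\delta(u)\ge 1$ always, and that, because $t$ is a spanning tree of $g$, $\delta(i)$ is precisely the number of endpoints of edges of $g-t$ incident to $i$; consequently $G=g$ holds if and only if the multigraph $\hat G$ formed by $M$ equals $g-t$, and $g-t$ has degree sequence $(\delta(i),1\le i\le n)$.

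Next I would invoke the standard configuration-model enumeration underlying (\ref{eq:cm}) (see \cite[Chapter 7]{MR3617364}): conditionally on $T=t$, for every multigraph $h$ with degree sequence $(\delta(i),1\le i\le n)$, $\p{\hat G=h\mid T=t}$ equals $\prod_i\delta(i)!$ divided by $\bigl((2m-2n+1)!!\cdot\prod_i 2^{m_h(ii)}\cdot\prod_e m_h(e)!\bigr)$. To pin down the root edge $\Gamma$, I would show that, conditionally on $\hat G=h$, the half-edge $\Gamma^+$ is equally likely to be paired into each of the $\delta(u)$ half-edges of $h$ incident to $u$; the cleanest rigorous route is a direct count of the matchings producing $h$ with $\Gamma^+$ forced into a prescribed copy of a prescribed edge at $u$, which reduces to the displayed enumeration applied to $h$ with one copy of that edge deleted. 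Since, given $T=t$, the event $\{\Gamma=\gamma\}$ coincides with the event that $\Gamma^+$ lies in a copy of the underlying unoriented edge $\bar\gamma$ (the orientation of $\Gamma$ being forced to have head $r(t)$, consistently with $\gamma$), this gives $\p{\Gamma=\gamma\mid\hat G=h}=2^{\I{\gamma\text{ is a loop}}}\,m_h(\bar\gamma)/\delta(u)$, the factor $2$ accounting for the two endpoints of a loop at $u$; that these probabilities sum to $1$ over edges incident to $u$ is a useful consistency check.

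Putting the pieces together, $\p{(G,T,\Gamma)=(g,t,\gamma)}$ is the product of $\p{T=t}$ — supplied by Proposition~\ref{prop:tree_formula}(1) — with $2^{\I{\gamma\text{ is a loop}}}m_{g-t}(\gamma)/\delta(u)$ and with the configuration-model weight above evaluated at $h=g-t$. The final step is an algebraic simplification based on the falling-factorial identities $(d(i)-1)_{c_t(i)}\,\delta(i)!=(d(i)-1)!$ for $i\ne u$ and $(d(u)-1)_{c_t(u)}\,\delta(u)!=\delta(u)\,(d(u)-1)!$: the factor $\prod_i(d(i)-1)_{c_t(i)}$ coming from $\p{T=t}$, the factor $\prod_i\delta(i)!$ coming from the configuration-model weight, and the factor $1/\delta(u)$ coming from the law of $\Gamma$ together collapse to $\prod_{i=1}^n(d(i)-1)!$, which depends only on $\dseq$. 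Since the remaining $\dseq$-dependent quantities $(2m-n)_{n-1}$ and $(2m-2n+1)!!$ are likewise independent of $(g,t,\gamma)$, what is left is exactly the claimed proportionality (noting also that $\prod_{e\in\edges(g)}m_{g-t}(e)!=\prod_{e\in\edges(g-t)}m_{g-t}(e)!$).

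I expect the main obstacle to be the bookkeeping at the two non-routine points above: establishing with full rigour the conditional uniformity of $\Gamma^+$ given $\hat G=h$ together with the loop factor $2^{\I{\gamma\text{ is a loop}}}$; and checking that the stray $\delta(u)$-factor produced by the root vertex cancels against the falling factorial $(d(u)-1)_{c_t(u)}$ in $\p{T=t}$, since this cancellation is precisely what removes the only dependence on $t$ that is not already of the advertised form.
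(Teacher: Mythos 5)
Your proposal is correct and follows essentially the same route as the paper's proof: condition on $T(\dseq)=t$, apply the exact configuration-model formula to the pendant degree sequence $(\delta(i))$ (your $(2m-2n+1)!!$ normalization agrees with the paper's $2^{m'}(m')!/(2m')!$), compute $\p{\Gamma=\gamma\mid T=t,\,G=g}=2^{\I{\gamma\ \mathrm{loop}}}m_{g-t}(\gamma)/\delta(r(t))$, and cancel via the same falling-factorial identities, including the key cancellation of $\delta(r(t))$ at the root. The only cosmetic difference is that you propose a direct matching count for the conditional law of $\Gamma^+$ where the paper simply invokes the symmetry of the uniform matching.
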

\begin{proof}
Proposition~\ref{prop:tree_formula} gives us a formula for $\p{T(\dseq)=t}$. We next focus on computing 
\[
\probC{G(\dseq)=g}{T(\dseq)=t}.
\] 
Write $r$ for the root of $t$, and $\gamma=qr$ for the oriented root edge of $g$. 
Given that $T(\dseq)=t$, each $i \in [n]$ with $i \ne r(t)$ has $d'(i):=d(i)-c_t(i)-1$ pendant half-edges attached to it, and $r$ has $d'(r) := d(r)-c_t(r)$ half-edges attached to it. Conditionally given that $T(\dseq)=t$, the graph $G(\dseq)-T(\dseq)$ is distributed as $\mathop{C\!M}(\dseq')$, a random graph with degree sequence $\dseq'=(d'(1),\ldots,d'(n))$ sampled according to the configuration model, so with distribution as in (\ref{eq:cm})), and more 
Writing $m':=m-(n-1)=\frac{1}{2}\sum_{i=1}^n d'(i)$ and $g'=(\vertices(g),\edges(g)\setminus \edges(t))$, it follows that
\begin{align*}
\probC{G(\dseq)=g}{T(\dseq)=t}
& = \p{\mathop{C\!M}(\dseq') = g'} \\
& = 
\frac{2^{m'}(m')!}{(2m')!}\frac{\prod_{i=1}^n d'(i)!}{\prod_{i=1}^n 2^{m_{g'}(ii)} \cdot \prod_{e \in \edges(g')} m_{g'}(e)!}\\
& = \frac{2^{m'}(m')!}{(2m')!} \frac{\prod_{i=1}^n d'(i)!}{\prod_{i=1}^n 2^{m_{g-t}(ii)} \cdot \prod_{e \in \edges(g)} m_{g-t}(e)!}\, .
\end{align*} 
For the second equality we have used the exact expression for the distribution of $\mathop{C\!M}(\dseq')$, which can be found in, e.g., \cite{MR3617364}, equation  (7.2.6). 
For the last equality, we use that $m_{g'}(ii)=m_{g-t}(ii)$ since $t$ is a tree so contains no loops, and that $m_{g'}(e)=m_{g-t}(e)$ by definition when $e \in \edges(g')$. 

Given that $T(\dseq)=t$ and that $G(\dseq)=g$, in order to have $(G(\dseq),T(\dseq),\Gamma(\dseq))=(g,t,\gamma)$ it is necessary and sufficient that $\Gamma(\dseq)=\gamma$. This occurs precisely if $\gamma^+$, the half-edge of $\gamma$ incident to $r$, was matched with some half-edge incident to $q$. Since the matching of half-edges in $G(\dseq)-T(\dseq)$ is chosen uniformly at random, by symmetry the conditional probability that this occurred is $m_{g-t}(\gamma)/d'(r)$ if $\gamma$ is not a loop, and is $2m_{g-t}(\gamma)/d'(r)$ if $\gamma$ is a loop. We may unify these two formulas by writing
\[
\probC{\Gamma(\dseq)=\gamma}{T(\dseq)=t,G(\dseq)=g} = 
\frac{2^{\I{\gamma~\mathrm{is~a~loop}}}m_{g-t}(\gamma)}{d'(r)}.  
\]
Combined with the formula for $\p{T(\dseq)=t}$ from Proposition~\ref{prop:tree_formula},  this gives 
\begin{align*}
& \p{(G(\dseq),T(\dseq),\Gamma(\dseq))=(g,t,\gamma)}\\
& = 
\frac{1}{(2m-n)_{n-1}} \prod_{i=1}^n (d(i)-1)_{c_t(i)} 
\\
& \quad \cdot  
\frac{2^{m'}(m')!}{(2m')!} \frac{\prod_{i=1}^n d'(i)!}{\prod_{i=1}^n 2^{m_{g-t}(ii)} \cdot \prod_{e \in \edges(g)} m_{g-t}(e)!}\\
& \quad \quad \cdot\frac{2^{\I{\gamma~\mathrm{is~a~loop}}}m_{g-t}(\gamma)}{d'(r)}\\
& = \prod_{i=1}^n (d(i)-1)! \cdot
\frac{2^{m-(n-1)}(m-(n-1))!}{2m'(2m-n)!}
\cdot  
\frac{2^{\I{\gamma~\mathrm{is~a~loop}}}m_{g-t}(\gamma)}{\prod_{i=1}^n 2^{m_{g-t}(ii)} \cdot \prod_{e \in \edges(g)} m_{g-t}(e)!}\, .
\end{align*}
In the second equality we have used that $(2m-n)_{n-1}(2m')!=2m'(2m-n)!$, that $(d(i)-1)_{c_t(i)}d'(i)! = (d(i)-1)!$ for $i \ne r$, and that $(d(r)-1)_{c_t(r)}d'(r)! = d'(r) (d(r)-1)!$. The first two terms on the final line do not depend on the triple $(g,t,\gamma)$, so the result follows. 
\end{proof}

\section{Concentration of degrees}
\label{sec:concentration}
Throughout this section, let $(\dseq^n,n \ge 1)$ be a sequence of degree sequences satisfying the conditions of Theorem~\ref{thm:main}, and also let $p^n$ and $p$ be as in Theorem~\ref{thm:main}. Next, for $n \ge 1$ let $T(\dseq^n)$ be the tree built by Pitman's additive coalescent applied to the degree sequence $\dseq^n=(d^n(i),1 \le i \le n)$. Let $\cseq^n=(c^n(i),1 \le i \le n)$ be the child sequence of $T(\dseq^n)$, and recall that  $Q_{\cseq^n}=(Q_{\cseq^n}(a),a \ge 0)$ is the child statistics vector of $\cseq^n$. Also, for $0 \le a < b$, let $P^n_{\gdeg,\tdeg} = \#\{1 \le i \le n: d^n(i)=\gdeg,c^n(i)=\tdeg\}$. Finally, let $\rho := 1/(\mu_1(p)-1)$. Note that 
since $\sum_{i \in [n]} d^n(i) \ge 2n$, necessarily $\mu_1(p^n) \ge 2$; since $p^n \to p$ pointwise and $\mu_2(p^n) \to \mu_2(p)$, it follows that $\mu_1(p^n) \to \mu_1(p)$, so $\mu_1(p) \ge 2$ and hence $\rho \in (0,1]$.
\begin{prop} \label{prop:degree_concentration}
For $\tdeg \ge 0$ let 
\[
q(\tdeg) := \sum_{\gdeg=\tdeg+1}^{\infty} p(\gdeg)\cdot \p{\mathrm{Bin}(\gdeg-1,\binprob)=\tdeg}. 
\]
Then $\mu_2(q) < \infty$ and $\mu_2(n^{-1}Q_{\cseq^n}) \to \mu_2(q)$ in probability as $n \to \infty$. Moreover, for all $0 \le \tdeg < \gdeg$, $n^{-1}P^n_{\gdeg,\tdeg} \convp p(\gdeg)\cdot \p{\mathrm{Bin}(\gdeg-1,\binprob)=\tdeg}$, and $n^{-1}Q_{\cseq^n}(\tdeg) \convp q(\tdeg)$, in both cases as $n \to \infty$. 
\end{prop}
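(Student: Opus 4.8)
The plan is to reduce the whole statement to elementary asymptotics of a multivariate hypergeometric vector. First, combining Proposition~\ref{prop:tree_formula}(1) with the tree-counting identity \eqref{eq:treecount} and summing $\p{T(\dseq^n)=t}$ over the $\binom{n-1}{c(1),\ldots,c(n)}$ trees $t$ with child sequence $\cseq=(c(i),1\le i\le n)$, one obtains, for every such $\cseq$ with $c(i)\le d^n(i)-1$ for all $i$,
\[
\p{\cseq^n=\cseq}=\frac{\prod_{i=1}^n\binom{d^n(i)-1}{c(i)}}{\binom{N_n}{n-1}},\qquad N_n:=\sum_{i=1}^n\bigl(d^n(i)-1\bigr)=n\bigl(\mu_1(p^n)-1\bigr).
\]
Equivalently, by Proposition~\ref{prop:tree_formula}(3), $c^n(i)=\lvert\{s_1,\ldots,s_{n-1}\}\cap\{i1,\ldots,i(d^n(i)-1)\}\rvert$, so $(c^n(i),1\le i\le n)$ is exactly the count vector of a uniformly random $(n-1)$-subset of a population of size $N_n$ partitioned into blocks of sizes $d^n(1)-1,\ldots,d^n(n)-1$. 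Since $\mu_1(p^n)\to\mu_1(p)\ge2$, we have $N_n\to\infty$ and $(n-1)/N_n\to\binprob=1/(\mu_1(p)-1)$; these are the only features of $\dseq^n$ entering the hypergeometric estimates below.

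The bound $\mu_2(q)<\infty$ is then immediate: interchanging the non-negative sums, $\mu_2(q)=\sum_{\gdeg\ge1}p(\gdeg)\,\E{\mathrm{Bin}(\gdeg-1,\binprob)^2}$, and $\E{\mathrm{Bin}(\gdeg-1,\binprob)^2}=(\gdeg-1)\binprob(1-\binprob)+((\gdeg-1)\binprob)^2\le(\gdeg-1)\gdeg\le\gdeg^2$, so $\mu_2(q)\le\mu_2(p)<\infty$. For the law of large numbers for $P^n_{\gdeg,\tdeg}$ I would use the second moment method; fix $0\le\tdeg<\gdeg$, and observe that the case $p(\gdeg)=0$ is trivial since then $n^{-1}P^n_{\gdeg,\tdeg}\le p^n(\gdeg)\to0$. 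In general, the marginal law of $c^n(i)$ for an index $i$ with $d^n(i)=\gdeg$ is $\mathrm{Hypergeometric}(N_n,\gdeg-1,n-1)$, which converges (with $\gdeg$ fixed, $N_n\to\infty$, $(n-1)/N_n\to\binprob$) to $\mathrm{Bin}(\gdeg-1,\binprob)$; since there are $np^n(\gdeg)$ such indices, $\e\bigl[n^{-1}P^n_{\gdeg,\tdeg}\bigr]=p^n(\gdeg)\,\p{c^n(i)=\tdeg}\to p(\gdeg)\,\p{\mathrm{Bin}(\gdeg-1,\binprob)=\tdeg}$. The pmf above is invariant under permuting coordinates of equal $d^n$-value, so $\e\bigl[(P^n_{\gdeg,\tdeg})^2\bigr]$ equals the $O(n)$ diagonal contribution plus $np^n(\gdeg)\bigl(np^n(\gdeg)-1\bigr)\,\p{c^n(i)=\tdeg,\,c^n(j)=\tdeg}$ for one fixed pair $i\ne j$ with $d^n(i)=d^n(j)=\gdeg$, and the joint law of $(c^n(i),c^n(j))$ converges to that of two independent $\mathrm{Bin}(\gdeg-1,\binprob)$ variables, so $\p{c^n(i)=\tdeg,c^n(j)=\tdeg}\to\p{\mathrm{Bin}(\gdeg-1,\binprob)=\tdeg}^2$. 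Hence $\e\bigl[(n^{-1}P^n_{\gdeg,\tdeg})^2\bigr]$ tends to the square of the limit of $\e[n^{-1}P^n_{\gdeg,\tdeg}]$, so $\va(n^{-1}P^n_{\gdeg,\tdeg})\to0$ and Chebyshev's inequality yields $n^{-1}P^n_{\gdeg,\tdeg}\convp p(\gdeg)\,\p{\mathrm{Bin}(\gdeg-1,\binprob)=\tdeg}$. (A bounded-differences inequality for sampling without replacement would do equally well, since swapping one element of the random $(n-1)$-subset changes $P^n_{\gdeg,\tdeg}$ by at most $4$.)

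It remains to pass to $Q_{\cseq^n}(\tdeg)=\sum_{\gdeg>\tdeg}P^n_{\gdeg,\tdeg}$ and to $\mu_2(n^{-1}Q_{\cseq^n})=\sum_{\tdeg\ge0}\tdeg^2\,n^{-1}Q_{\cseq^n}(\tdeg)$, which I would do by truncation; here the hypothesis $\mu_2(p^n)\to\mu_2(p)<\infty$ is essential. The relevant deterministic tail bounds are $n^{-1}P^n_{\gdeg,\tdeg}\le p^n(\gdeg)$ and, using $c^n(i)\le d^n(i)-1$ pointwise, $\sum_{\tdeg>M}\tdeg^2\,n^{-1}Q_{\cseq^n}(\tdeg)\le\sum_{\gdeg>M+1}\gdeg^2 p^n(\gdeg)$. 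By Scheff\'{e}'s lemma, $p^n\to p$ pointwise together with $\mu_2(p^n)\to\mu_2(p)$ forces $\sum_{\gdeg\ge1}\lvert\gdeg^2 p^n(\gdeg)-\gdeg^2 p(\gdeg)\rvert\to0$, whence $\sup_n\sum_{\gdeg>M}\gdeg^2 p^n(\gdeg)\to0$ as $M\to\infty$. Splitting each sum at level $M$: the finitely many terms with $\gdeg\le M$ (resp.\ $\tdeg\le M$) converge in probability by the previous paragraph, the tails are uniformly (in $n$) small by the above bounds, and on the limit side $\sum_{\gdeg>M}p(\gdeg)$ and $\sum_{\tdeg>M}\tdeg^2 q(\tdeg)$ are small because $p$ is a probability distribution and $\mu_2(q)<\infty$. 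This gives $n^{-1}Q_{\cseq^n}(\tdeg)\convp q(\tdeg)$ and $\mu_2(n^{-1}Q_{\cseq^n})\convp\mu_2(q)$.

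I expect the hypergeometric-to-binomial limits and the second-moment bound to be routine; the only delicate point — and precisely where the finite-variance hypothesis is needed — is the uniform control of the degree tails that legitimises interchanging $n\to\infty$ with the (a priori infinite) sums over $\gdeg$ and over $\tdeg$ defining $Q_{\cseq^n}(\tdeg)$ and its second moment.
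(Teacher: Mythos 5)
Your proposal is correct and follows essentially the same route as the paper: it identifies the child counts as a multivariate hypergeometric vector via Proposition~\ref{prop:tree_formula}(3), proves concentration of $P^n_{\gdeg,\tdeg}$ by the second moment method with hypergeometric asymptotics (this is exactly the content of the paper's Lemma~\ref{lem:degree_change_control}), and passes to $Q_{\cseq^n}$ and its second moment by truncation using $c^n(i)\le d^n(i)-1$ and the uniform tail control coming from $\mu_2(p^n)\to\mu_2(p)$. The only presentational difference is that you treat the limits $\rho<1$ and $\rho=1$ uniformly, where the paper separates the cases $\mu_1(p)>2$ and $\mu_1(p)=2$; your unified hypergeometric-to-binomial limit does cover the degenerate case $\rho=1$, so nothing is lost.
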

Let $(G(\dseq^n),T(\dseq^n),\Gamma(\dseq^n))$ be a random tree-weighted graph with degree sequence $\dseq^n$. Using Proposition \ref{prop:degree_concentration}, together with existing results from the literature,  it is fairly straightforward to establish that $(\sigma n^{-1/2})T(\dseq^n) \convdist \cT$, with $\sigma=\mu_2(q)-1 \in (0,\infty)$, where $\cT$ is the Brownian continuum random tree. However, in order to show that such convergence holds for the corresponding random {\em simple} tree-weighted graphs, we additionally need the next proposition, which establishes that the number of pairs of tree-adjacent vertices  in $T(\dseq^n)$ with given fixed degrees is well-concentrated around its expected values. This will be used in order to show that the probability of $G(\dseq^n)$ being simple given $T(\dseq^n)$ asymptotically behaves like a constant.

Write $G_-(\dseq^n) = G(\dseq^n)-T(\dseq^n)$ and let $\dseq_-^n=(d^n_-(i),1 \le i \le n)$ be the degree sequence of $G_-(\dseq^n)$. For integers $k,\ell \ge 0$, let 
\begin{equation}\label{eq:alphadef}
\alpha(k,\ell) = 
\sum_{\tdeg_1,\tdeg_2 \ge 0}
\tdeg_2p(\ell+\tdeg_2+1)\p{\mathrm{Bin}(\ell+\tdeg_2,\rho)=\tdeg_2}\cdot p(k+\tdeg_1+1)\p{\mathrm{Bin}(k+\tdeg_1,\rho)=\tdeg_1}\, .
\end{equation} 
\begin{prop}\label{prop:akl_conv}
For integers $k,\ell \ge 0$ let 
\[
A^n(k,\ell) = 
\left|\left\{uv \in \edges(T(\dseq^n)) : \dseq^n_-(u)=k,\dseq^n_-(v)=\ell \right\} \right|.
\] 
Then for all $k,\ell \ge 0$, 
\[
\frac{1}{n}A^n(k,\ell) \convp \alpha(k,\ell)
\]
as $n \to \infty$, and also 
\[
\frac{1}{n}\sum_{k,\ell \ge 0} 
k\ell A^n(k,\ell) \convp \sum_{k,\ell \ge 0} k\ell \alpha(k,\ell). 
\]
\end{prop}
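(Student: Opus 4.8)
The plan is to argue conditionally on the child sequence $\cseq^n=(c^n(i),1\le i\le n)$ of $T(\dseq^n)$. By the Corollary to Proposition~\ref{prop:tree_formula}, given $\cseq^n$ the tree $T(\dseq^n)$ is uniformly distributed among rooted trees with that child sequence, and the vector $(d^n_-(i),1\le i\le n)$ is deterministic, since $d^n_-(i)=d^n(i)-c^n(i)-1$ for $i\neq\mathrm{root}$ (and $d^n_-(\mathrm{root})=d^n(\mathrm{root})-c^n(\mathrm{root})$). Writing $S_k=\{i\in[n]:d^n_-(i)=k\}$, we have $|S_k|=\sum_{\tdeg\ge 0}P^n_{k+\tdeg+1,\tdeg}$ and $\sum_{i\in S_k}c^n(i)=\sum_{\tdeg\ge 0}\tdeg\,P^n_{k+\tdeg+1,\tdeg}$ up to a single-vertex discrepancy at the root, both deterministic given $\cseq^n$. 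Proposition~\ref{prop:degree_concentration} controls each summand, and dominated convergence handles the infinite sums --- their tails are bounded by $\sum_{\gdeg>J}p^n(\gdeg)\le\mu_1(p^n)/J$ and $\sum_{\gdeg>J}\gdeg\,p^n(\gdeg)\le\mu_2(p^n)/J$, with $\mu_1(p^n)\to\mu_1(p)$ and $\mu_2(p^n)\to\mu_2(p)$ --- so that $\tfrac1n|S_k|$ and $\tfrac1n\sum_{i\in S_k}c^n(i)$ converge in probability to $\sum_{\tdeg}p(k+\tdeg+1)\p{\mathrm{Bin}(k+\tdeg,\rho)=\tdeg}$ and $\sum_{\tdeg}\tdeg\,p(k+\tdeg+1)\p{\mathrm{Bin}(k+\tdeg,\rho)=\tdeg}$ respectively.

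The combinatorial input is the identity that for a uniformly random rooted tree $T$ with child sequence $\cseq$ on $[n]$ and distinct vertices $u,v$ (and a disjoint edge $\{u',v'\}$), $\p{\parent(u)=v}=c(v)/(n-1)$, $\p{\mathrm{root}=u}=c(u)/(n-1)$, and $\p{\parent(u)=v,\ \parent(u')=v'}=c(v)c(v')/((n-1)(n-2))$; each follows from the count~(\ref{eq:treecount}) by contracting the relevant edge(s) --- identifying a child into its parent maps the set of such trees onto trees with a modified child sequence on fewer vertices, the fibres having size $\binom{c(v)+c(u)-1}{c(u)}$, and the multinomial factors telescope. With $A^n(k,\ell)=\sum_{u\neq\mathrm{root}}\I{d^n_-(u)=k}\,\I{d^n_-(\parent(u))=\ell}$, the first identity gives
\[
\E{A^n(k,\ell)\mid\cseq^n}=\frac{1}{n-1}\,|S_k|\sum_{v\in S_\ell}c^n(v)+R_n,
\]
where $R_n$ collects the $k=\ell$ self-interaction ($O(1)$) and the root discrepancy ($O(c^n(\mathrm{root}))$, which is $o(n)$ in probability, since $\E{d^n(\mathrm{root})\mid\cseq^n}=(n-1)^{-1}\sum_i d^n(i)c^n(i)$ is bounded in probability and hence $d^n(\mathrm{root})=o(n^{\eps})$ by Markov). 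Dividing by $n$ and invoking the previous paragraph yields $n^{-1}\E{A^n(k,\ell)\mid\cseq^n}\convp\alpha(k,\ell)$; one checks directly that $\alpha(k,\ell)$ of~(\ref{eq:alphadef}) is exactly the product of the two limiting sums. The two-edge identity then controls $\V{A^n(k,\ell)\mid\cseq^n}$: generic pairs of edges contribute $O\!\big((|S_k|\sum_{v\in S_\ell}c^n(v))^2/n^3\big)=O(n)$, using $\sum_v c^n(v)=n-1$, and the coincidence terms (two edges sharing a parent, or forming a path of length two) are $O(n)$ in probability by the same sum-level bounds together with $\sum_v c^n(v)^2=n\mu_2(n^{-1}Q_{\cseq^n})=O(n)$ in probability. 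Thus $\V{A^n(k,\ell)\mid\cseq^n}=o(n^2)$ in probability, and since this quantity is always at most $n^2$, Chebyshev's inequality applied conditionally on $\cseq^n$, together with bounded convergence, gives $n^{-1}A^n(k,\ell)\convp\alpha(k,\ell)$.

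For the weighted statement, observe $n^{-1}\sum_{k,\ell}k\ell\,A^n(k,\ell)=n^{-1}\sum_{u\neq\mathrm{root}}d^n_-(u)\,d^n_-(\parent(u))$, to which the same two-moment scheme applies. Up to a root correction that is $o(1)$ in probability, the first identity gives
\[
\E{n^{-1}\!\!\sum_{u\neq\mathrm{root}}\! d^n_-(u)\,d^n_-(\parent(u))\ \Big|\ \cseq^n}=\tfrac{n}{n-1}\Big(\tfrac1n\!\sum_i d^n_-(i)\Big)\Big(\tfrac1n\!\sum_i d^n_-(i)c^n(i)\Big)-\tfrac{1}{n(n-1)}\!\sum_i d^n_-(i)^2c^n(i),
\]
and the last term is $O\!\big(n^{-2}\sum_i d^n(i)^3\big)=O(n^{-1/2})$ because $\sum_i d^n(i)^2=n\mu_2(p^n)=O(n)$ forces $\max_i d^n(i)=O(n^{1/2})$. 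Here $\tfrac1n\sum_i d^n_-(i)=\mu_1(p^n)-2(n-1)/n\to\mu_1(p)-2$, while $\tfrac1n\sum_i d^n_-(i)c^n(i)$ converges in probability by Proposition~\ref{prop:degree_concentration} and the uniform-integrability bound $\tfrac1n\sum_i d^n(i)c^n(i)=\tfrac1n\sum_{\gdeg,\tdeg}\gdeg\tdeg\,P^n_{\gdeg,\tdeg}\le\tfrac12(\mu_2(p^n)+\mu_2(n^{-1}Q_{\cseq^n}))$, using $\mu_2(p^n)\to\mu_2(p)$ and $\mu_2(n^{-1}Q_{\cseq^n})\convp\mu_2(q)$; an elementary binomial-moment computation identifies the resulting product with $\sum_{k,\ell}k\ell\,\alpha(k,\ell)$. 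Finally the conditional variance of $n^{-1}\sum_{u\neq\mathrm{root}}d^n_-(u)d^n_-(\parent(u))$ tends to $0$ in probability, by the two-edge identity and the same cubic-moment estimates, and conditional Chebyshev closes the proof.

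I expect the \emph{main obstacle} to be pushing these limits through under only a second-moment hypothesis on the degrees: every asymptotic above involves an infinite sum or a sum of order $n^2$ terms with unbounded summands, and one repeatedly needs that $\mu_2(p^n)\to\mu_2(p)<\infty$ and $\mu_2(n^{-1}Q_{\cseq^n})\convp\mu_2(q)<\infty$ force $\max_i d^n(i)$ and $\max_i c^n(i)$ to be $O(n^{1/2})$ in probability, so that cubic-in-degree errors are absorbed by the $n^{-2}$ normalisations and the coincidence contributions to the conditional variances are genuinely $o(n^2)$. The accompanying delicate point is the lone anomalous vertex --- the root of $T(\dseq^n)$, whose $d^n_-$ value is off by one and whose degree is a priori unbounded; it is controlled via $\p{\mathrm{root}=u\mid\cseq^n}=c^n(u)/(n-1)$, which shows the root degree is bounded in probability once $\cseq^n$ is revealed, so the root's contribution to every quantity above is negligible.
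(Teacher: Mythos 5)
Your argument is correct in substance, but it takes a genuinely different route from the paper's. The paper conditions on the set $\cS^n$ of non-root half-edges and exploits the exchangeability of the execution paths of Pitman's coalescent (the identities behind Proposition~\ref{prop:r_asymptotic}), first establishing convergence of the four-index counts $R_{\gdeg_1,\gdeg_2,\tdeg_1,\tdeg_2}$ and then assembling $A^n(k,\ell)$ from them; the first claim is closed by a one-sided argument (finitely many $R$-terms give the lower bound, total mass $\sum_{k,\ell}A^n(k,\ell)=n-1$ gives the upper bound), and the weighted claim by truncation at level $M$ plus an exchangeability-based Markov bound on the tail. You instead condition on the child sequence $\cseq^n$, use that $T(\dseq^n)$ is then uniform with that child sequence, and import the classical contraction identities $\Cprob{\parent(u)=v}{\cseq^n}=c^n(v)/(n-1)$ and its two-edge analogue, running a direct conditional second-moment argument on $A^n(k,\ell)$ and on $\sum_u d^n_-(u)d^n_-(\parent(u))$ itself. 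What your route buys is that it bypasses Proposition~\ref{prop:r_asymptotic} entirely and treats the weighted sum without truncation (the deterministic tail bounds $\sum_{\gdeg>J}\gdeg\,p^n(\gdeg)\le \mu_2(p^n)/J$ do the work that exchangeability does in the paper); what the paper's route buys is that it never leaves the coalescent framework already set up, so no new tree enumeration is needed. Your handling of the root (the one vertex whose $d^n_-$ value is anomalous, contributing at most $c^n(r(T(\dseq^n)))\le\max_i d^n(i)=o(n^{1/2})$ to every count) matches the paper's.

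One point you should make explicit: your stated two-edge identity covers only disjoint edges, but the variance computation genuinely needs its analogues for the overlapping configurations. For the length-two path the crude bound $\Cprob{\parent(u)=v,\ \parent(v)=v'}{\cseq^n}\le c^n(v)/(n-1)$ is \emph{not} enough (it gives $O(n^2)$, not $o(n^2)$, for those terms); you need the refined identity $\Cprob{\parent(u)=v,\ \parent(v)=v'}{\cseq^n}=c^n(v)c^n(v')/((n-1)(n-2))$, and for siblings $\Cprob{\parent(u)=v,\ \parent(u')=v}{\cseq^n}=c^n(v)(c^n(v)-1)/((n-1)(n-2))$. Both follow from the same contraction/telescoping computation as your disjoint-edge identity, so this is a routine addition, but as written the assertion that the coincidence terms are $O(n)$ ``by the same sum-level bounds'' does not quite stand on its own.
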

The proofs of Propositions~\ref{prop:degree_concentration} and~\ref{prop:akl_conv} appear in Appendix~\ref{app:1}. 

To conclude the section, we observe that $\alpha(k,\ell)$ defines a probability distribution on pairs of non-negative integers. Indeed, 
\begin{align*}
\sum_{k \ge 0} \sum_{\tdeg_1 \ge 0} 
p(k+\tdeg_1+1)\p{\mathrm{Bin}(k+\tdeg_1,\rho)=\tdeg_1}
& = 
\sum_{m\ge 0} \sum_{a=0}^m
p(m+1)\p{\mathrm{Bin}(m,\rho)=\tdeg}\\
& = \sum_{m \ge 0} p(m+1) = 1-p(0)=1\, ,
\end{align*}
and 
\begin{align*}
& \sum_{\ell \ge 0} 
\sum_{\tdeg_2 \ge 0}
\tdeg_2p(\ell+\tdeg_2+1)\p{\mathrm{Bin}(\ell+\tdeg_2,\rho)=\tdeg_2}\\
& = 
\sum_{m \ge 0} \sum_{a=0}^m
\tdeg p(m+1) \p{\mathrm{Bin}(m,\rho)=\tdeg}\\
& = \sum_{m \ge 0} p(m+1) \cdot m \rho 
=  (\mu_1(p)-(1-p(0)))\rho = (\mu_1(p)-1)\rho = 1,
\end{align*}
so by factorizing $\sum_{k,\ell \ge 0} \alpha(k,\ell)$ we obtain 
\[
\sum_{k,\ell \ge 0} \alpha(k,\ell) 
=
\pran{\sum_{m \ge 0} p(m+1)}
\cdot
\pran{\sum_{m \ge 0}\sum_{m \ge 0}  p(m+1) \cdot m \rho } 
= 1\, ; 
\] 
the fact that $\sum_{k,\ell\ge 0} \alpha(k,\ell)=1$ will be used in the proof of Proposition~\ref{prop:akl_conv}. 
A similar computation shows that 
\begin{equation}\label{eq:klsum_finite} 
\sum_{k,\ell \ge 0} k\ell\cdot\alpha(k, \ell) 
\le 
\pran{\sum_{m \ge 0} mp(m+1)}\cdot 
\pran{\sum_{m \ge 0} m^2 p(m+1) \rho} 
= \mu_2(p)-2\mu_1(p) +1 < \infty\, , 
\end{equation} 
a fact we will use in bounding the probability of simplicity of $G(\dseq^n)$. 

\section{Poisson approximation for graph superpositions.}\label{sec:poisson_approx}
In this section we state a Poisson approximation theorem for the number of loops and multiple edges in the superposition of a fixed simple graph and a random graph with a fixed degree sequence; this in particular allows us to control the probability that such a superposition yields a simple graph. 

Let $H$ be a simple graph with vertex set $v(H)=[n]$. 
Fix a degree sequence $\dseq=(d(1),\ldots,d(n))$ whose sum of degrees is even, and let $G$ be a random graph with degree sequence $\dseq$ sampled according to the configuration model.
For vertices $u,v \in [n]$ and $i \in [d(u)],j \in [d(v)]$, let $\I{ui,vj}$ be the indicator of the event that half-edge $ui$ is matched with half-edge $vj$ in $G$. Now write 
\begin{align*}
\cL& = \cL(G) = \{(ui,uj): u \in [n], i,j \in [d(u)], i<j\}\\
\cM& = \cM(G,H) 
= \{((ui_1,vj_1),(ui_2,vj_2)): u,v \in [n], uv \notin e(H), \\ 
& 
\ \ \ \ \ \ \ \ \ \ \ \ \ \ \ \ \ \ \ \ \ \ \ \ \ \ \
i_1,i_2 \in [d(u)], j_1,j_2 \in [d(v)], u<v, i_1<i_2,j_1 \neq j_2 \}, \text{ and}\\
\cN& = \cN(G,H) = \{(ui,vj): uv \in e(H), i \in [d(u)], j \in [d(v)] \},
\end{align*}  
and let
\begin{align*} 
L 	& = L(G) = \sum_{(ui,uj) \in \cL} \I{ui,uj}  \, ,\\ 
M 	& = M(G,H) = \sum_{((ui_1,vj_1),(ui_2vj_2)) \in \cM} \I{(ui_1,vj_1)} \I{(ui_2vj_2)} \, , \text{ and} \\ 
N	& = N(G,H) \sum_{(ui,vj) \in \cN} \I{ui,vj} \ .
\end{align*} 
Note that the graph with edge set $e(G)\cup e(H)$ is simple precisely if $L+M+N=0$.

\begin{thm} \label{Poisson Asymptotics}
Fix a sequence of simple graphs $(h_n,n \ge 1)$ with $v(h_n)=[n]$ for all $n \geq 1$ and $\max_{v \in [n]} \{\deg_{h_n}(v)\} = o(n)$. For each $n \ge 1$ let $\dseq^{n}=(d^{n}(v),1 \le v \le n)$ be a degree sequence and let $p^n$ be the degree distribution of $\dseq^n$. 
Suppose that there exists a probability distribution $p=(p(k),k \ge 0)$ with $\mu_2(p) \in [0,\infty)$ and $p(0)<1$ such that the following holds.

First, $p^n \to p$ pointwise and $\mu_2(p^n) \to \mu_2(p)$. Second, there are non-negative numbers $(\alpha(a,b),a,b \ge 0)$ such that for any $a,b \ge 0$
\[
\alpha^n(a,b) := \frac{1}{n} \left|\left\{uv \in e(h_n) : d^n(u)=a,d^n(v)=b \right\} \right| \to \alpha(a,b),
\]
and 
\begin{equation} \label{convergence to eta}
\sum_{k,\ell \geq 0} kl\alpha^n(k,\ell) \rightarrow \sum_{k,\ell \geq 0} kl\alpha(k,\ell) < \infty 
\end{equation}

For $n \ge 1$ let $G_n$ be distributed according to the configuration model on graphs with vertex set $[n]$ and degree sequence  $d^n$. 
Then with $L_n=L(G_n)$, $M_n=M(G_n,h_n)$ and $N_n=N(G_n,h_n)$, we have
\[ 
\|\mathrm{Dist}(L_n,M_n,N_n) - \mathrm{Poi}(\nu/2)\otimes 
\mathrm{Poi}(\nu^2/4)\otimes \mathrm{Poi}(\eta)\|_{\mathrm{TV}} \to 0
\]
as $n \to \infty$, where $\nu = \left(\mu_2(p)/\mu_1(p)\right) - 1$ and 
$\eta = \frac{1}{\mu_1(p)} \sum_{i,j \ge 1} ij\alpha(i,j)$. 
\end{thm}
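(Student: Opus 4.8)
\textbf{Proof proposal for Theorem~\ref{Poisson Asymptotics}.}

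The plan is to run a standard Stein--Chen / Poisson approximation argument for sums of dependent indicators, in the form tailored to the configuration model. First I would set up the index set $\cI = \cL \sqcup \cM \sqcup \cN$, with the corresponding indicator for each index being $\I{ui,uj}$, $\I{(ui_1,vj_1)}\I{(ui_2,vj_2)}$, or $\I{ui,vj}$ respectively, so that $(L_n,M_n,N_n)$ is obtained by summing the indicators within each of the three blocks. Because in the configuration model the probability that a prescribed set of half-edge pairs is realized is an explicit ratio of double factorials, all the first and second moments of these indicators are computable in closed form; the leading-order asymptotics are governed by $2m = \sum_i d^n(i) \sim \mu_1(p) n$ and $\sum_i d^n(i)(d^n(i)-1) \sim (\mu_2(p)-\mu_1(p)) n$, together with the hypothesis $\max_v \deg_{h_n}(v) = o(n)$ and the convergence \eqref{convergence to eta}. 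I expect to get $\e L_n \to \nu/2$, $\e M_n \to \nu^2/4$, and $\e N_n \to \eta$, with $\nu = \mu_2(p)/\mu_1(p) - 1$ and $\eta = \frac{1}{\mu_1(p)}\sum_{i,j\ge 1} ij\alpha(i,j)$, exactly as claimed. (The factor $1/2$ in the loop term and the square in the multi-edge term are the usual combinatorial artifacts of counting ordered versus unordered configurations; I'd track these carefully but not belabor them.)

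Next I would verify the dissociation-type bounds needed for Stein--Chen. For each index $\beta$, define a neighbourhood $B_\beta$ consisting of those indices sharing at least one half-edge with $\beta$ (these are the only indices on which the indicator at $\beta$ has strong positive dependence); indices outside $B_\beta$ are only weakly negatively correlated with $\beta$ in the configuration model. The Stein--Chen bound for the total-variation distance between $\mathrm{Dist}(\sum \I{\beta})$ and the appropriate Poisson law is then controlled by three sums: $b_1 = \sum_\beta \sum_{\gamma \in B_\beta} \e{\I\beta}\e{\I\gamma}$, $b_2 = \sum_\beta \sum_{\gamma \in B_\beta \setminus\{\beta\}} \e{\I\beta\I\gamma}$, and $b_3 = \sum_\beta \e{\,\lvert \e{\I\beta \mid (\I\gamma)_{\gamma \notin B_\beta}} - \e{\I\beta}\rvert\,}$. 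To handle all three components jointly I would use the multivariate version of the Stein--Chen method (e.g.\ Barbour--Holst--Janson, or Arratia--Goldstein--Gordon), which reduces the joint statement — that $(L_n,M_n,N_n)$ converges to a product of three independent Poissons — to showing the same kind of $b_1,b_2,b_3$ sums vanish, where now neighbourhoods may cross blocks but the count of cross-block dependent pairs is still negligible because any two indices that interact must share a half-edge at a common vertex, and each vertex contributes $O(d^n(v)^2)$ or $O(d^n(v)^3)$ such pairs, summable to $o(n)$ after division by the $n^2$ or $n^3$-order normalizations in the denominators. The independence in the limit is forced precisely because the neighbourhoods are small relative to the whole index set.

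The main obstacle I anticipate is controlling $b_2$ for the $M$-block: the indicator at an element of $\cM$ is a product of two half-edge-pairing indicators, so $\e{\I\beta\I\gamma}$ for two overlapping elements $\beta,\gamma \in \cM$ involves up to four half-edge constraints, and the combinatorics of which half-edges coincide (and hence which falling-factorial cancellations occur in the configuration-model probability) requires a careful case analysis. The key quantitative input that makes every case work is that $\sum_v d^n(v)^k = O(n)$ for $k \le 2$ (from $\mu_2(p^n)\to\mu_2(p)<\infty$), that $\max_v \deg_{h_n}(v) = o(n)$ lets us replace one factor of a vertex degree by $o(n)$ wherever an $h_n$-edge is involved, and that higher moments like $\sum_v d^n(v)^3$ appear only divided by at least $n^2$ or multiplied by a $\max_v d^n(v)/n = o(1)$ factor. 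I would organize this as a short lemma bounding, for each pair of blocks, the contribution of overlapping pairs, and then assemble the three Stein--Chen error terms. Once $b_1,b_2,b_3 \to 0$ are established, the multivariate Stein--Chen theorem delivers the stated total-variation convergence to $\mathrm{Poi}(\nu/2)\otimes\mathrm{Poi}(\nu^2/4)\otimes\mathrm{Poi}(\eta)$ immediately.
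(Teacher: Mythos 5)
Your route is genuinely different from the paper's: the paper never invokes Stein--Chen, but instead proves a quantitative estimate on all joint \emph{factorial} moments $\E{(L_n)_q(M_n)_r(N_n)_s}$ (Proposition~\ref{Moment Bound}), shows via Lemmas~\ref{bounds1} and~\ref{bounds2} that these converge to $(\nu/2)^q(\nu^2/4)^r\eta^s$, and concludes by the method of moments for Poisson limits (Theorem 2.6 of \cite{MR3617364}); since the limit law is supported on $\Z_{\ge0}^3$, weak convergence upgrades to total variation for free. The combinatorial heart of your sketch --- classifying overlapping index pairs and exploiting $\sum_v (d^n(v))^2=O(n)$, $d^n_{\max}$ small, and \eqref{convergence to eta} --- is essentially the same work as the paper's classification of ``bad'' and ``good'' conflicts, so the Stein--Chen framing does not save that effort; what it would buy, if completed, is an explicit rate.

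The genuine gap is the term $b_3$. The Arratia--Goldstein--Gordon bound lets you drop $b_3$ only when each indicator is \emph{independent} of the indicators outside its neighbourhood $B_\beta$ (dissociation). In the configuration model this fails: the uniform matching couples all half-edges globally, so the conditional expectation $\E{\I{\beta}\mid(\I{\gamma})_{\gamma\notin B_\beta}}$ can differ from $\E{\I{\beta}}$ by a constant factor on events of non-negligible probability, and ``weak negative correlation'' controls covariances, not this conditional expectation. You give no argument that $b_3\to0$, and none is routine; the standard fix is the coupling form of Stein--Chen (Barbour--Holst--Janson), constructing the matching conditioned on $\I{\beta}=1$ from the unconditioned one by a switching that disturbs only $O(1)$ pairs and checking negative relatedness --- which is delicate for the $\cM$-block, whose summands are products of two pairing indicators rather than single indicators. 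Two smaller omissions: several of your overlap sums need $d^n_{\max}=o(n^{1/2})$ (which does follow from $\mu_2(p^n)\to\mu_2(p)$, cf.\ Fact~\ref{fact:maxdeg}), not merely the hypothesis $\max_v\deg_{h_n}(v)=o(n)$; and the degenerate cases $\mu_2(p)=\mu_1(p)$ (so $\nu=0$) and $\eta=0$, where $|\cL_n|$, $|\cM_n|$ or $|\cN_n|$ fall below the generic order and the normalizations in your $b_1,b_2$ estimates break down, require the separate (easy, first-moment) treatment the paper gives them.
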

In the statement of Theorem \ref{Poisson Asymptotics} we have introduced the notation $\deg_{h_n}(v)$ for the degree of vertex $v$ in $h_n$, and the notation $\|\mu-\nu\|_{\mathrm{TV}}$ for the total variation distance between probability measures. The proof of Theorem~\ref{Poisson Asymptotics} appears in Appendix~\ref{app:2}. This theorem has the following consequence for random tree-weighted graphs, which we will use in the next section. 
\begin{cor}\label{cor:simple_probability}
Let $(\dseq^n,n \ge 1)$ and $(p^n,n \ge 1)$ be as in  Theorem~\ref{thm:main}, and for $n \ge 1$ let $(G(\dseq^n),T(\dseq^n),\Gamma(\dseq^n))$ be a random tree-weighted graph with degree sequence $\dseq$. Then
\[
\probC{G(\dseq^n)~\mathrm{simple}}{T(\dseq^n)} \convp \exp(-\nu/2-\nu^2/4-\eta)\, ,
\]
 as $n \to \infty$. 
\end{cor}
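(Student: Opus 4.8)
The plan is to deduce Corollary~\ref{cor:simple_probability} by applying Theorem~\ref{Poisson Asymptotics} conditionally on the spanning tree-elect $T(\dseq^n)$. Recall from the construction in Section~\ref{sec:additive_coal} that, conditionally given $T(\dseq^n)=t$, the graph $G_-(\dseq^n)=G(\dseq^n)-T(\dseq^n)$ is a configuration model random graph on vertex set $[n]$ with degree sequence $\dseq_-^n$ (the residual half-edge counts), and $G(\dseq^n)$ is simple precisely when the superposition of $G_-(\dseq^n)$ with the fixed simple graph $t$ is simple, i.e.\ when $L+M+N=0$ in the notation of Theorem~\ref{Poisson Asymptotics} with $G=G_-(\dseq^n)$ and $H=t$. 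So the first step is to verify that, for a ``typical'' realization of $T(\dseq^n)$, the pair $(\dseq_-^n, T(\dseq^n))$ satisfies the hypotheses of Theorem~\ref{Poisson Asymptotics}, and then to read off the limiting Poisson parameters.

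The second step is to check the hypotheses. The degree distribution of $\dseq_-^n$: we have $d^n_-(i)=d^n(i)-c^n(i)-1$ for non-root vertices and $d^n_-(r)=d^n(r)-c^n(r)$, so the root contributes $O(1/n)$ to the degree distribution and can be ignored; by Proposition~\ref{prop:degree_concentration}, $n^{-1}P^n_{b,a}\convp p(b)\p{\mathrm{Bin}(b-1,\rho)=a}$, which gives that the degree distribution of $\dseq_-^n$ converges in probability pointwise to $q'$, where $q'(\ell)=\sum_{b\ge \ell+1}p(b)\p{\mathrm{Bin}(b-1,\rho)=\ell}=q(\ell)$ — wait, more precisely $d^n_-(i)=\ell$ means $c^n(i)=d^n(i)-1-\ell$, so summing over $b=d^n(i)$ gives the limiting mass $\sum_{b\ge\ell+1}p(b)\p{\mathrm{Bin}(b-1,\rho)=b-1-\ell}$; call this $p_-(\ell)$. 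One checks $p_-$ is a probability distribution with $p_-(0)<1$ (using $p(2)<1$) and computes $\mu_1(p_-)$ and $\mu_2(p_-)$ from the moment data, with $\mu_2(n^{-1}Q^{-,n})\convp\mu_2(p_-)$ again coming from Proposition~\ref{prop:degree_concentration} (the second-moment convergence there transfers to $\dseq_-^n$ since $d_-\le d$). The graph-superposition statistics: $\alpha^n(k,\ell)=n^{-1}|\{uv\in\edges(T(\dseq^n)):\dseq^n_-(u)=k,\dseq^n_-(v)=\ell\}|=n^{-1}A^n(k,\ell)$, which by Proposition~\ref{prop:akl_conv} converges in probability to $\alpha(k,\ell)$, and the weighted sum $n^{-1}\sum k\ell A^n(k,\ell)\convp\sum k\ell\alpha(k,\ell)<\infty$ by the same proposition together with~\eqref{eq:klsum_finite}. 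Finally $\max_{v}\deg_{T(\dseq^n)}(v)\le\max_i d^n(i)=o(n)$ since $\mu_2(p^n)\to\mu_2(p)<\infty$ forces $\max_i d^n(i)=o(n^{1/2})$. Thus, on an event of probability tending to $1$, all hypotheses of Theorem~\ref{Poisson Asymptotics} hold with the appropriate limits.

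The third step is bookkeeping of the constants. Applying Theorem~\ref{Poisson Asymptotics} to $(\dseq_-^n,T(\dseq^n))$ gives that $\probC{G(\dseq^n)\ \mathrm{simple}}{T(\dseq^n)}=\probC{L_n+M_n+N_n=0}{T(\dseq^n)}\to e^{-\nu'/2-\nu'^2/4-\eta'}$, where $\nu'=\mu_2(p_-)/\mu_1(p_-)-1$ and $\eta'=\mu_1(p_-)^{-1}\sum_{i,j\ge1}ij\alpha(i,j)$, and the product form of the limit law makes $\p{L_n=M_n=N_n=0}$ converge to the product of the three point masses at $0$. It then remains to verify the algebraic identities $\nu'=\nu$ and $\eta'=\eta$, i.e.\ that $\mu_2(p_-)/\mu_1(p_-)-1$ equals $\mu_2(p)/\mu_1(p)-1$ and that $\mu_1(p_-)^{-1}=\mu_1(p)^{-1}$; equivalently $\mu_1(p_-)=\mu_1(p)$ and $\mu_2(p_-)=\mu_2(p)$. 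These follow from direct computation: $\mu_1(p_-)=\sum_b p(b)\cdot\e[b-1-\mathrm{Bin}(b-1,\rho)]=\sum_b p(b)(b-1)(1-\rho)$, and since $1-\rho=(\mu_1(p)-2)/(\mu_1(p)-1)$ and $\sum_b p(b)(b-1)=\mu_1(p)-1$, this gives $\mu_1(p_-)=\mu_1(p)-2$; hmm, so in fact the identities are $\nu'=\nu$, $\eta'=\eta$ after a short calculation that I will carry out using $\mu_1(p_-)=\mu_1(p)-2$ and the analogous second-moment formula, matching these against the definitions $\nu=\mu_2(p)/\mu_1(p)-1$, $\eta=\mu_1(p)^{-1}\sum ij\alpha(i,j)$ as stated in Theorem~\ref{Poisson Asymptotics} (note that the $\alpha$ there is the $h_n$-degree statistic, which in our application is indexed by $\dseq$-degrees, not $\dseq_-$-degrees, so one must be careful that the $\alpha(i,j)$ appearing in $\eta$ is computed with the residual degrees $k,\ell$; this is consistent with~\eqref{eq:alphadef}). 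Finally, one upgrades convergence in distribution of the conditional probability to convergence in probability: the conditional probability is a bounded random variable (a function of $T(\dseq^n)$) converging in distribution to a constant, hence converging in probability to that constant.

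\textbf{Main obstacle.} The conceptual content is entirely in Propositions~\ref{prop:degree_concentration}, \ref{prop:akl_conv} and Theorem~\ref{Poisson Asymptotics}, so the real work here is the second step — carefully transferring the degree-statistics convergence for $\dseq^n$ (and the child statistics of $T(\dseq^n)$) to the residual degree sequence $\dseq_-^n$, handling the root vertex's negligible contribution, and confirming that the limiting parameters produced by Theorem~\ref{Poisson Asymptotics} simplify to exactly $\nu/2+\nu^2/4+\eta$ as defined in terms of $p$ and $\alpha$. The moment identities $\mu_1(p_-)=\mu_1(p)-2$, $\mu_2(p_-)=\mu_2(p)-\dots$ and the resulting cancellations are the fiddliest point, but they are routine generating-function or conditional-expectation computations; I do not anticipate any genuine difficulty beyond care with indices.
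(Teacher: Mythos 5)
Your strategy for the non-degenerate case is exactly the paper's: condition on $T(\dseq^n)$, note that $G_-(\dseq^n)=G(\dseq^n)-T(\dseq^n)$ is then a configuration-model graph with the residual degree sequence $\dseq^n_-$, feed Propositions~\ref{prop:degree_concentration} and~\ref{prop:akl_conv} into Theorem~\ref{Poisson Asymptotics} with $h_n=T(\dseq^n)$, and upgrade the resulting convergence to convergence in probability. But there is a genuine gap: your assertion that $p_-(0)<1$ ``using $p(2)<1$'' is false. As you yourself compute a few lines later, $\mu_1(p_-)=\mu_1(p)-2$, and the hypotheses of Theorem~\ref{thm:main} permit $\mu_1(p)=2$ (for instance $p(1)=p(3)=1/2$; the condition $p(2)<1$ does not exclude this). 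In that case $\mu_1(p_-)=0$ and $p_-(0)=1$, so the hypothesis $p(0)<1$ of Theorem~\ref{Poisson Asymptotics} fails; moreover $G_-(\dseq^n)$ has only $m-(n-1)=o(n)$ edges and your limiting parameters $\nu'=\mu_2(p_-)/\mu_1(p_-)-1$ and $\eta'=\mu_1(p_-)^{-1}\sum_{i,j}ij\alpha(i,j)$ are not even well defined. The paper flags this explicitly and devotes the bulk of the proof of the corollary to this degenerate case: it removes the root edge $\Gamma(\dseq^n)$ to obtain a graph $G_n'$ that is conditionally independent of $T(\dseq^n)$ given the set $\cS^n$ of non-root half-edges, shows $\E{L_n'+M_n'+N_n'}\to 0$ by direct first-moment computations (using $m-n=o(n)$, the exchangeability identities from Proposition~\ref{prop:tree_formula}, and Fact~\ref{fact:maxdeg}), and then reinstates the root edge via a separate size-biasing estimate based on Proposition~\ref{eq:gtg_dist}. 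None of this appears in your proposal, and it cannot be avoided, since the Poisson approximation theorem genuinely does not apply there.

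A secondary, more cosmetic confusion: the constants $\nu$ and $\eta$ in the corollary are simply those produced by Theorem~\ref{Poisson Asymptotics} applied with the residual distribution $p_-$ and the $\alpha$ of (\ref{eq:alphadef}); there is no identity $\mu_1(p_-)=\mu_1(p)$ to verify (it is false, as your own computation shows), and for the use made of the corollary in Section~\ref{sec:finalproof} only the existence and positivity of the constant limit matter. That part of your write-up is muddled but would be harmless if the degenerate case were addressed.
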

This corollary follows straightforwardly from Theorem~\ref{Poisson Asymptotics} when $\mu_2(p^n)>2$, in which case $G_-(\dseq^n)=G(\dseq^n)-T(\dseq^n)$ has a linear number of edges. However, when $\mu_2(p^n)=2$, and the graph $G_-(\dseq^n)$ has a sub-linear number of edges a separate argument is needed. The proof of Corollary~\ref{cor:simple_probability} also appears in Appendix~\ref{app:2}.

\section{Proof of Theorem~\ref{thm:main}}
\label{sec:finalproof} 
Let $(\dseq^n,n \ge 1)$ be a sequence of degree sequences satisfying the conditions of Theorem~\ref{thm:main}.  For $n \ge 1$ let $T(\dseq^n)$ be the tree built by Pitman's additive coalescent applied to degree sequence $\dseq^n$, and let $\cseq^n$ be the child sequence of $T(\dseq^n)$. By Proposition~\ref{prop:tree_formula}~(1), conditionally given $\cseq^n$, the tree $T(\dseq^n)$ is uniformly distributed over the set of trees with child sequence $\cseq^n$. 

By Proposition~\ref{prop:degree_concentration}, the child statistics vectors $(Q_{\cseq^n},n \ge 1)$ satisfy that, as $n \to \infty$, for all $a \ge 0$, 
\begin{equation}\label{eq:qconv}
n^{-1}Q_{\cseq^n}(a) \convp q(a), 
\end{equation}
and moreover that $\mu_2(n^{-1}Q_{\cseq^n}(a)) \to \mu_2(q)$. Here $q=(q(a),a \ge 0)$ is as in Proposition~\ref{prop:degree_concentration}, and in particular satisfies $\mu_2(q) <\infty$. We will also need that $\mu_2(q) > 1$, and we now justify this. 

The convergence (\ref{eq:qconv}) and the 
fact that $\mu_2(n^{-1}Q_{\cseq^n}(a)) \to \mu_2(q)$ together imply that $\mu_1(n^{-1}Q_{\cseq^n}(a)) \to \mu_1(q)$. But $\mu_1(n^{-1}Q_{\cseq^n}(a)) = (n-1)/n$ since $Q_{\cseq^n}$ is a child sequence, so necessarily $\mu_1(q)=1$. By the definition of $q$, 
if $\rho = 1$ then $q(1)=p(2)$, and $p(2) < 1$ by  assumption. If $\rho > 1$ then 
\[
q(0) := \sum_{\gdeg=1}^{\infty} p(\gdeg)\cdot \p{\mathrm{Bin}(\gdeg-1,\binprob)=0} > 0, 
 \]
 so again $q(1) \le (1-q(0))< 1$. Thus, we always have $q(1)<1$, which together with the fact that $\mu_1(q)=1$ implies that $\mu_2(q)>1$.

Writing $\sigma=\mu_2(q)-1 \in (0,\infty)$, it then follows by Theorem 1 of~\cite{MR3188597} that 
\[
\overline{T}(\dseq^n):=
\frac{\sigma}{n^{1/2}} T(\dseq^n) \convdist \cT,
\]
in the Gromov-Hausdorff-Prokhorov sense.\footnote{Theorem 1 of~\cite{MR3188597} is stated for plane trees with a fixed degree sequence, rather than labeled trees with a fixed degree sequence. However, as noted by Broutin and Marckert \citep[page 295]{MR3188597}, a straightforward combinatorial argument shows that the same result holds for labeled trees. Also, as stated, the theorem only yields convergence in the Gromov-Hausdorff sense; but the proof proceeds by establishing convergence distributional of coding functions. As explained in \cite[Section 3]{MR3035742}, such proofs immediately yield the stronger Gromov-Hausdorff-Prokhorov convergence.}

We aim to prove the same statement with $\overline{T}(\dseq^n)$ replaced by $\overline{T}_n:=(\sigma/n^{1/2})T_n$, where $(G_n,T_n,\Gamma_n)$ is is a uniformly random simple tree-rooted graph with degree sequence $\dseq^n$. To accomplish this, we use that the law of $(G_n,T_n,\Gamma_n)$ is precisely the conditional law of $(G(\dseq^n),T(\dseq^n),\Gamma(\dseq^n))$ given that $G(\dseq^n)$ is a simple graph.

Writing $\mathbb{K}$ for Gromov-Hausdorff-Prokhorov space as in \cite{MR3035742}, for any bounded continuous function $f:\mathbb{K} \to \R$ we have 
\begin{align*}
\E{f(\overline{T}(\dseq^n))\cdot\I{G(\dseq^n)~\mathrm{simple}}} & =
\e\Big(\mathbf{E}\left(f(\overline{T}(\dseq^n))\cdot\I{G(\dseq^n)~\mathrm{simple}}~\big|~T(\dseq^n)\right)\Big) \\
& = \E{f(\overline{T}(\dseq^n))\cdot 
\probC{G(\dseq^n)~\mathrm{simple}}{T(\dseq^n)}}. 
\end{align*}
Since $\e{f(\overline{T}(\dseq^n))} \to \e{f(\cT)}$, and $\probC{G(\dseq^n)~\mathrm{simple}}{T(\dseq^n)} \convp \exp(-\nu/2-\nu^2/4-\eta)$ by Corollary~\ref{cor:simple_probability}, 
it follows that 
\[
\E{f(\overline{T}(\dseq^n))\cdot\I{G(\dseq^n)~\mathrm{simple}}} \to \exp(-\nu/2-\nu^2/4-\eta) \E{f(\cT)}.
\]
Furthermore, 
\[
\p{G(\dseq^n)~\mathrm{simple}} = \E{\probC{G(\dseq^n)~\mathrm{simple}}{T(\dseq^n)}} \to \exp(-\nu/2-\nu^2/4-\eta),
\]
and therefore 
\[
\mathbf{E}\left(f(\overline{T}(\dseq^n))~\big|~G(\dseq^n)~\mathrm{simple}\right)
= 
\frac{\E{f(\overline{T}(\dseq^n))\cdot\I{G(\dseq^n)~\mathrm{simple}}}}{\p{G(\dseq^n)~\mathrm{simple}}}
\to 
\E{f(\cT)}\, .
\]
Since 
\[
\E{f(\overline{T}_n)} = \E{f(\overline{T}(\dseq^n)~\big|~G(\dseq^n)~\mathrm{simple}}\, ,
\]
the fact that $\overline{T}_n \convdist \cT$ now follows by the Portmanteau theorem. \hfill\qed

\appendix
\section{Proofs of Propositions~\ref{prop:degree_concentration} and~\ref{prop:akl_conv}}\label{app:1}

Before beginning the proofs in earnest, we
state and prove a simple bound on the asymptotic behaviour of maximum degrees and sums of small sets of degrees, for sequences of degree sequences as in Theorems~\ref{thm:main} and~\ref{Poisson Asymptotics}, which will be used multiple times below. 
\begin{fact}\label{fact:maxdeg}
For each $n \ge 1$ let $\dseq^{n}=(d^{n}(v),1 \le v \le n)$ be a degree sequence and let $p^n$ be the degree distribution of $\dseq^n$. 
Suppose that there exists a probability distribution $p=(p(k),k \ge 0)$ such that $p^n \to p$ pointwise and $\mu_2(p^n) \to \mu_2(p) \in [0,\infty)$. Then $\max_{1 \le i \le n} d^n(i) = o(n^{1/2})$. 
Also, for any sets $(A_n,n \ge 1)$ with $A_n \subset [n]$ and $|A_n|=o(n)$, it holds that $\sum_{i \in A_n} d^n(i)=o(n)$.
\end{fact}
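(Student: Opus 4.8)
The plan is to deduce both assertions from a single \emph{uniform second-moment tail estimate}: for every $\eps>0$ there is an integer $K=K(\eps)$ with $\limsup_{n\to\infty}\sum_{k>K}k^2p^n(k)\le\eps$. I expect this to be the only substantive step, and it is also where the hypotheses are genuinely used: pointwise convergence $p^n\to p$ by itself controls only individual coordinates, whereas the extra requirement that $\mu_2(p^n)\to\mu_2(p)$ with $p$ a bona fide probability distribution is precisely what prevents $k^2$-mass from escaping to infinity.

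To establish the tail estimate I would fix $\eps>0$, use $\mu_2(p)=\sum_kk^2p(k)<\infty$ to choose $K$ with $\sum_{k>K}k^2p(k)<\eps$, and then note that for this fixed $K$ the finite sum $\sum_{k=0}^{K}k^2p^n(k)$ converges to $\sum_{k=0}^{K}k^2p(k)$ term by term. Subtracting this from $\mu_2(p^n)\to\mu_2(p)$ gives $\sum_{k>K}k^2p^n(k)\to\sum_{k>K}k^2p(k)<\eps$, as wanted; in particular $C:=\sup_n\mu_2(p^n)<\infty$.

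For the maximum, write $M_n=\max_{1\le i\le n}d^n(i)$ and observe that whenever $M_n>K$ the vertex attaining the maximum lies in $\{i:d^n(i)>K\}$, so $M_n^2\le\sum_{i:\,d^n(i)>K}(d^n(i))^2=n\sum_{k>K}k^2p^n(k)$. Given $\eps>0$ I would take $K=K(\eps)$ from the tail estimate; then for $n$ large either $M_n\le K$ or $M_n^2\le\eps n$, so $\limsup_nM_n/n^{1/2}\le\eps^{1/2}$, and letting $\eps\downarrow0$ yields $M_n=o(n^{1/2})$.

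For the second assertion, given $A_n\subset[n]$ with $|A_n|=o(n)$ I would fix $\delta>0$, pick $K$ with $C/K<\delta/2$, and split $A_n$ according to whether $d^n(i)\le K$. The low-degree part contributes at most $K|A_n|=o(n)$, while on the high-degree part $d^n(i)\le(d^n(i))^2/K$, so it contributes at most $\frac1K\sum_{i=1}^n(d^n(i))^2=n\mu_2(p^n)/K\le Cn/K<\delta n/2$. Hence $\sum_{i\in A_n}d^n(i)<\delta n$ for all large $n$, and since $\delta$ is arbitrary, $\sum_{i\in A_n}d^n(i)=o(n)$. No step beyond the tail estimate of the second paragraph should present any difficulty.
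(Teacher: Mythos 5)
Your proposal is correct and rests on the same idea as the paper's proof, namely that pointwise convergence of $p^n$ together with $\mu_2(p^n)\to\mu_2(p)<\infty$ forbids $k^2$-mass from escaping to infinity; you phrase this as a direct uniform tail bound $\limsup_n\sum_{k>K}k^2p^n(k)\le\eps$, whereas the paper runs the equivalent contrapositive $\liminf$/$\limsup$ argument. All steps (the reduction $M_n^2\le n\sum_{k>K}k^2p^n(k)$ when $M_n>K$, and the split of $A_n$ at level $K$ using $\sup_n\mu_2(p^n)<\infty$) are sound.
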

\begin{proof}
If $p^n \to p$ pointwise and $\mu_2(p^n) \to \mu_2(p) \in [0,\infty)$, then for all $\eps > 0$ there is $M$ such that 
\[
\liminf_{n \to \infty} \sum_{k=1}^M k^2p^n(k) \ge \mu_2(p) - \eps,
\]
so $\sup_{M \ge 1} \liminf_{n \to \infty} \sum_{k=1}^M k^2p^n(k) \ge \mu_2(p)$. 
If additionally there is $\delta > 0$ such that $\max_{1 \le i \le n} d^n(i) \ge \delta n^{1/2}$ for infinitely many $n$, then 
\[
\limsup_{n \to \infty} 
\mu_2(p^n) \ge 
\delta^2 + \sup_{M \ge 1} \liminf_{n \to \infty} \sum_{k=1}^M k^2p^n(k) > \mu_2(p),
\]
so $\mu_2(p^n) \not \to \mu_2(p)$.

Similarly, for sets $(A_n,n\ge 1)$ as in the statement, since $|A_n|=o(n)$, for any $M \in \N$ we have $\sum_{i \in A_n}^n (d^n(i))^2\I{d^n(i)\le M} = o(n)$, so for any $\eps > 0$ there is $M \in \N$ such that 
\[
\liminf_{n \to \infty} \frac{1}{n} \sum_{i=1}^n (d^n(i))^2\I{d^n(i)\le M}\I{i \not\in A_n} \ge \mu_2(p) - \eps. 
\]
This implies that $\liminf_{n \to \infty} n^{-1} \sum_{i=1}^n (d^n(i))^2\I{i \not\in A_n} \ge \mu_2(p)$. 
If also there is $\delta > 0$ such that $\sum_{i \in A_n} (d^n(i))^2 > \delta n$ for infinitely many $n$, then 
\[
\limsup_{n \to \infty} \mu_2(p^n)=\limsup_{n \to \infty} \frac{1}{n} \sum_{i=1}^n (d^n(i))^2 \ge \mu_1(p)+\delta\, ,
\]
so $\mu_2(p^n) \not\to \mu_2(p)$. 
\end{proof}
Note that the conditions on the degree sequences in both Theorem~\ref{thm:main} and Theorem~\ref{Poisson Asymptotics} allow Fact~\ref{fact:maxdeg} to be applied. 

To prove Proposition~\ref{prop:degree_concentration}, we will  make use of the following lemma, which uses the second moment method to control how subsampling affects degree distributions. The proof of the proposition immediately follows that of the lemma. 
\begin{lem}\label{lem:degree_change_control}
For any integer $\gdeg \ge 1$ there exists $n_0$ such that for all $n \ge n_0$ the following holds. 
Let $\dseq = (d(1),\ldots,d(n))$ be a degree sequence with $d(i) \ge 1$ for all $i\in [n]$ and with  $\sum_{i =1}^n d(i) \ge 2n-1$,  set $S = \bigcup_{i=1}^n \{i1,\ldots,i(d(i)-1)\}$ and write $s=|S|$. Let $\urand$ be a uniformly random subset of $S$ with $|U|=n-1$, and for $1 \le i \le n$ write $U_i = \#\{1 \le j < d(i): (i,j) \in \urand\}$. 
For $0 \le \tdeg < \gdeg$, write $P_{\gdeg,\tdeg} = \#\{1 \le i \le n: (d(i),U_i)=(\gdeg,\tdeg)\}$. 
Then for all $\eps > 0$, 
\[
\p{|P_{\gdeg,\tdeg} - \e P_{\gdeg,\tdeg}| > \eps \e P_{\gdeg,\tdeg}} < \frac{1}{\eps^2} \pran{\frac{1}{\e P_{b,a}}+ \frac{2\gdeg^2}{s}}\, .
\]
\end{lem}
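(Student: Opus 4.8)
The plan is to run a standard second-moment computation for $P_{\gdeg,\tdeg}$ under the hypergeometric-type randomness coming from choosing $\urand$ uniformly among $(n-1)$-subsets of $S$. First I would write $P_{\gdeg,\tdeg} = \sum_{i : d(i)=\gdeg} \I{U_i = \tdeg}$, so that by linearity $\e P_{\gdeg,\tdeg} = Q_\dseq(\gdeg) \cdot \p{U_i=\tdeg}$ for any fixed $i$ with $d(i)=\gdeg$ (the value $\p{U_i=\tdeg}$ is the same for all such $i$ by symmetry). Here $U_i$ counts how many of the $\gdeg-1$ non-root half-edges at $i$ land in $\urand$, and the joint law of $(U_i)_i$ is that of a uniformly random $(n-1)$-subset of $S$ split according to the block sizes $d(i)-1$; marginally $U_i$ is hypergeometric with parameters $(s, \gdeg-1, n-1)$. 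Since $n-1 \sim \rho s$ asymptotically — more precisely $s = 2m-n \ge n-1$ and $s = \Theta(n)$ under the degree hypotheses — this hypergeometric is close to $\mathrm{Bin}(\gdeg-1,\rho)$, and in any case $\p{U_i=\tdeg}$ is bounded below by a positive constant depending only on $\gdeg$ for $n$ large; that is where the $n_0=n_0(\gdeg)$ in the statement comes from.

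Next I would apply Chebyshev: $\p{|P_{\gdeg,\tdeg}-\e P_{\gdeg,\tdeg}| > \eps \e P_{\gdeg,\tdeg}} \le \va(P_{\gdeg,\tdeg})/(\eps^2 (\e P_{\gdeg,\tdeg})^2)$, so everything reduces to bounding the variance. Writing $P_{\gdeg,\tdeg}=\sum_i \xi_i$ with $\xi_i=\I{d(i)=\gdeg}\I{U_i=\tdeg}$, we have $\va(P_{\gdeg,\tdeg}) = \sum_i \va(\xi_i) + \sum_{i\ne j}\mathrm{Cov}(\xi_i,\xi_j) \le \e P_{\gdeg,\tdeg} + \sum_{i\ne j}\mathrm{Cov}(\xi_i,\xi_j)$. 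The bound $\va(\xi_i)\le \e\xi_i$ (indicators) handles the diagonal and yields the $1/\e P_{\gdeg,\tdeg}$ term after dividing by $(\e P_{\gdeg,\tdeg})^2$. The pairwise covariance is where the negative-association structure of the uniform $(n-1)$-subset helps: for $i\ne j$, knowing that many half-edges at $i$ are in $\urand$ makes it slightly less likely that many half-edges at $j$ are, so the covariances are essentially non-positive. I would make this quantitative by comparing to sampling with replacement (equivalently, conditioning on a Poissonized version) or by a direct two-step computation: condition on $U_i$, then $U_j$ is hypergeometric on the reduced ground set of size $s - (\gdeg-1)$ with $n-1-U_i$ still to be placed, and a short Taylor estimate shows $|\p{U_j=\tdeg \mid U_i=\tdeg} - \p{U_j=\tdeg}| = O(\gdeg^2/s)$ uniformly. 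Summing over the $\le (Q_\dseq(\gdeg))^2$ ordered pairs and dividing by $(\e P_{\gdeg,\tdeg})^2 = (Q_\dseq(\gdeg))^2 \p{U_i=\tdeg}^2$ gives a term of order $\gdeg^2/(s\,\p{U_i=\tdeg}^2)$, and absorbing the constant $\p{U_i=\tdeg}^{-2}$ (bounded for $n\ge n_0(\gdeg)$) into the constant "$2$" in the statement produces the claimed $2\gdeg^2/s$ bound.

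The main obstacle I expect is controlling the pairwise dependence cleanly enough to extract exactly the $O(\gdeg^2/s)$ rate with an explicit constant, rather than a soft "negatively associated, hence covariance $\le 0$" statement — one does need the $o(\gdeg^2/s)$ correction because distinct blocks are not independent, and getting the arithmetic to land on the stated inequality (as opposed to one with a larger absolute constant) requires a careful but elementary hypergeometric estimate, e.g. bounding $\bigl|\binom{s-(\gdeg-1)}{n-1-\tdeg}\big/\binom{s}{n-1} - \binom{s}{n-1-\tdeg}\cdot(\text{ratio})\bigr|$ via ratios of falling factorials. None of this is deep; it is just a matter of choosing the comparison (with-replacement coupling versus direct conditioning) that keeps the bookkeeping shortest, and then verifying the $n_0(\gdeg)$ threshold makes $\p{U_i=\tdeg}$ bounded away from $0$ so the constant can be taken to be $2$.
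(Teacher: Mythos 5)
Your overall strategy --- Chebyshev plus first and second moment computations for the hypergeometric counts $P_{\gdeg,\tdeg}$ --- is the same as the paper's, and the diagonal term correctly yields the $1/\e P_{\gdeg,\tdeg}$ contribution. The gap is in the off-diagonal covariances. You bound the \emph{additive} error $|\probC{U_j=\tdeg}{U_i=\tdeg}-\p{U_j=\tdeg}|=O(\gdeg^2/s)$; after summing over pairs and dividing by $(\e P_{\gdeg,\tdeg})^2=(Q_{\dseq}(\gdeg))^2\p{U_i=\tdeg}^2$ this leaves at least one uncancelled factor of $\p{U_i=\tdeg}^{-1}$, which you propose to absorb into the constant on the grounds that $\p{U_i=\tdeg}$ is bounded below by a constant depending only on $\gdeg$ for $n\ge n_0(\gdeg)$. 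That claim is false. The lemma is a standalone statement with no upper bound on $s$, so $\rho=(n-1)/s$ can tend to $0$ (take many high-degree vertices alongside the degree-$\gdeg$ ones), in which case $\p{U_i=\tdeg}\to 0$ for $\tdeg\ge 1$; and even under the hypotheses of Theorem~\ref{thm:main} one can have $\rho\to 1$ (the case $\mu_1(p)=2$, where $s=(1+o(1))(n-1)$), in which case $\p{U_i=\tdeg}\to 0$ for all $\tdeg<\gdeg-1$. So the factor cannot be absorbed into a constant depending only on $\gdeg$, and your argument does not deliver the stated $2\gdeg^2/s$. (Relatedly, $n_0(\gdeg)$ is not there to bound $\p{U_i=\tdeg}$ below; in the paper it is used only for the elementary estimate $(1+\gdeg/(s-2\gdeg))^{\gdeg}\le 1+2\gdeg^2/s$.)

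The repair is to control the \emph{multiplicative} error instead: writing both the joint and the marginal hypergeometric probabilities as ratios of falling factorials and cancelling, one gets
\[
\p{U_i=U_j=\tdeg}\le \frac{(s)_{\gdeg}^2}{(s)_{2\gdeg}}\,\p{U_i=\tdeg}\p{U_j=\tdeg}\le \Big(1+\frac{2\gdeg^2}{s}\Big)\p{U_i=\tdeg}\p{U_j=\tdeg}
\]
for $\gdeg$ fixed and $s$ large (the factors involving $(n-1)_{\cdot}$ and $(s-(n-1))_{\cdot}$ in the ratio are at most $1$). Then the covariance of the two indicators is at most $(2\gdeg^2/s)\p{U_i=\tdeg}\p{U_j=\tdeg}$, the sum over pairs is at most $(2\gdeg^2/s)(\e P_{\gdeg,\tdeg})^2$, and the $\p{U=\tdeg}$ factors cancel exactly in Chebyshev. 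This is precisely how the paper's proof proceeds; with that one change your argument goes through.
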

\begin{proof}
We fix $0 \le \tdeg \le \gdeg$ and compute the first and second moments of $P_{\gdeg+1,\tdeg}$; this makes the calculations slightly easier to read than they would be for $P_{\gdeg,\tdeg}$. 

Fix indices $k$ and $\ell$ with $k \neq \ell$ and $d(k)=d(\ell)=\gdeg+1$. 
Since $\urand$ is a uniformly random subset of $S$, by symmetry we have 
\[
\p{|U_k|=\tdeg} = \p{|U_\ell|=\tdeg} = {\gdeg \choose \tdeg} \cdot {s-\gdeg \choose n-1-\tdeg}{s \choose n-1}^{-1},
\]
and 
\[
\p{|U_k|=|U_\ell|=\tdeg} = {\gdeg \choose \tdeg}^2 \cdot {s-2\gdeg \choose n-1-2\tdeg}{s \choose n-1}^{-1}, 
\]
so writing $n_{\gdeg+1} = \#\{1 \le i \le n: d(i) = \gdeg+1\}$, we have 
\begin{equation}\label{eq:mu_ab_formula}
\e{P_{\gdeg+1,\tdeg}} = n_{\gdeg+1}{\gdeg \choose \tdeg} \cdot {s-\gdeg \choose n-1-\tdeg}{s \choose n-1}^{-1}\, 
\end{equation}
and
\begin{align*}
& \V{P_{\gdeg+1,\tdeg}} \\
&
= n_{\gdeg+1}(n_{\gdeg+1}-1) {\gdeg \choose \tdeg}^2 \pran{{s - 2\gdeg \choose n-1-2\tdeg}{s \choose n-1}^{-1}-{s-\gdeg \choose n-1-\tdeg}^2{s \choose n-1}^{-2}}\\
& 
+ n_{b+1}\pran{{b \choose a} {s-b \choose n-1-1}{s \choose n-1}^{-1}
- {b \choose a}^2 {s-b \choose n-1-1}^2{s \choose n-1}^{-2}},
\end{align*}
where the final line accounts for the diagonal terms. 
Bounding the final line from above by $\e P_{b+1,a}$ and cancelling terms in the parenthetical expression in the middle line gives 
\begin{align*}
& \V{P_{\gdeg+1,\tdeg}}-\e P_{b+1,a} \\
& \le  n_{\gdeg+1}(n_{\gdeg+1}-1) {\gdeg \choose \tdeg}^2 
\pran{\frac{(n-1)_{2\tdeg}(s-(n-1))_{2(\gdeg-\tdeg)}}{(s)_{2\gdeg}}- \frac{(n-1)_\tdeg^2(s-(n-1))_{\gdeg-\tdeg}^2}{(s)_\gdeg^2}}. 
\end{align*}
The ratio of the first and the second term in the final parentheses is 
\begin{align*}
\frac{(n-1)_{2\tdeg}}{(n-1)_\tdeg^2} \frac{(s-(n-1))_{2(\gdeg-\tdeg)}}{(s-(n-1))_{\gdeg-\tdeg}^2} \frac{(s)_\gdeg^2}{(s)_{2\gdeg}} &  \le \frac{(s)_\gdeg^2}{(s)_{2\gdeg}} 
 \le \pran{1+\frac{\gdeg}{s-2\gdeg}}^\gdeg \le 1 + \frac{2\gdeg^2}{s}\, ,
\end{align*}
the last bound holding for $\gdeg$ fixed and $s$ large. This gives 
\begin{align*}
\V{P_{\gdeg+1,\tdeg}} & \le 
\e P_{b+1,a} + n_{\gdeg+1}(n_{\gdeg+1}-1) {\gdeg \choose \tdeg}^2 \frac{2\gdeg^2}{s} {s-\gdeg \choose n-1-\tdeg}^2{s \choose n-1}^{-2} \\
& < \e P_{b+1,a} + \frac{2\gdeg^2}{s} (\e P_{\gdeg+1,\tdeg})^2\, ,
\end{align*}
and the lemma follows by Chebyshev's inequality. 
\end{proof}
\begin{proof}[Proof of Proposition~\ref{prop:degree_concentration}]
We first bound $\mu_2(q)$ by writing 
\begin{align*}
\mu_2(q) & = \sum_{\tdeg \ge 0} \tdeg^2 q(\tdeg) \\
		& = \sum_{\tdeg \ge 0} \tdeg^2 \sum_{\gdeg > \tdeg} p(\gdeg)\cdot \p{\mathrm{Bin}(\gdeg-1,\binprob)=\tdeg}\\
		& \le \sum_{\gdeg > 0} \gdeg^2 p(\gdeg)\cdot \sum_{0 \le \tdeg < \gdeg} \p{\mathrm{Bin}(\gdeg-1,\binprob)=\tdeg}\\
		& = \mu_2(p)^2 < \infty \, .
\end{align*}

Next, since $p^n \to p$ pointwise and $\mu_2(p^n) \to \mu_2(p) < \infty$, for any $\eps > 0$ there is $k$ such that 
$\sum_{d \ge k} d^2 p^n(d) < \eps$ and $\sum_{d \ge k} d^2 p(d) < \eps$. 
If node $i$ has $\tdeg$ children in $T(\dseq^n)$ then $d^n(i) \ge a+1$, so it follows that 
\begin{align*}
\sum_{\tdeg = k}^\infty \tdeg^2 \frac{Q_{\cseq^n}(\tdeg)}{n}
& = \sum_{\tdeg \ge k} \sum_{\gdeg >\tdeg} \tdeg^2 \frac{\#\{i \le n: c^n(i)=\tdeg,d^n(i)=\gdeg\}}{n} \\
& \le \sum_{\gdeg > k} \gdeg^2 \sum_{\tdeg < \gdeg}\frac{\#\{i \le n: c^n(i)=\tdeg,d^n(i)=\gdeg\}}{n} \\
& = \sum_{\gdeg > k} \gdeg^2 p^n(\gdeg) < \eps\, . 
\end{align*}
To complete the proof it thus suffices to show that 
$n^{-1}P^n_{\gdeg,\tdeg} \to p(\gdeg)\cdot \p{\mathrm{Bin}(\gdeg-1,\binprob)=\tdeg}$ in probability for all $0 \le \tdeg < \gdeg$ and that $n^{-1}Q_{\cseq^n}\to q$ pointwise in probability; the fact that $\mu_2(n^{-1}Q_{\cseq^n}) \to \mu_2(q)$ in probability then immediately follows. 

By the third statement of Proposition~\ref{prop:tree_formula}, the set of non-root half-edges in $T(\dseq^n)$ is a uniformly random size-$(n-1)$ subset of the set $S^n:= \bigcup_{1 \le i \le n} \{i1,\ldots,i(d^n(i)-1)\}$. We will apply Lemma~\ref{lem:degree_change_control} to control the numbers of nodes with a given number of children in $T(\dseq^n)$. To make the coming applications of that lemma transparent, we write $s^n := |S^n| = \sum_{1 \le i \le n} (d^n(i)-1)$.

We handle the cases $\mu_1(p)=2$ and $\mu_1(p) > 2$ separately. If $\mu_1(p)=2$ then $|S^n| = \sum_{1 \le i \le n} (d^n(i)-1) = (1+o(1)) n$ as $n \to \infty$. Note that in this case $\rho(p)=1/(\mu_1(p)-1)=1$ so $q(\tdeg)=p(\tdeg+1)$ for all $\tdeg \ge 0$. 
For any $\tdeg \ge 0$, by (\ref{eq:mu_ab_formula}) we then have 
\[
\e P^n_{\tdeg+1,\tdeg} = (1-o(1)) np^n(\tdeg+1) {\tdeg \choose \tdeg} { |S^n|-1-\tdeg \choose n-1 -\tdeg} {|S_n|\choose n-1}^{-1} = (1-o(1)) np^n(\tdeg+1).
\]
If $p(a+1) > 0$ then $n p^n(a+1) = \Theta(n)$, so
\[
\frac{1}{\E{P^n_{a+1,a}}} + \frac{2(a+1)^2}{|S^n|} = o(1),
\] 
and hence by Lemma~\ref{lem:degree_change_control}, 
\[
\frac{P^n_{\tdeg+1,\tdeg}}{n} \convp p(\tdeg+1)= q(\tdeg). 
\]
If $p(a+1)=0$ then $p^n(a+1) = o(1)$, so $\E{P^n_{a+1,a}}/n \to 0$ and thus $P^n_{\tdeg+1,\tdeg}/n \convp 0=q(\tdeg)$ by Markov's inequality. Since this holds for all $a \ge 0$, and $\sum_{\tdeg \ge 0} P^n_{\tdeg+1,\tdeg}/n \le 1=\sum_{a \ge 0} q(a)$, it follows that $\sum_{\tdeg \ge 0} P^n_{\tdeg+1,\tdeg}/n \to 1$ in probability. 
This implies that $\sum_{\gdeg > \tdeg+1} P^n_{\gdeg,\tdeg}/n \to 0$ in probability, 
so 
\[
\frac{Q_{\cseq^n}(\tdeg)}{n} = \frac{1}{n} \sum_{\gdeg > \tdeg} P^n_{\gdeg,\tdeg} = \frac{P^n_{\tdeg+1,\tdeg}}{n} + \sum_{\gdeg > \tdeg+1} \frac{P^n_{\gdeg,\tdeg}}{n} \convp q(\tdeg)\, ,
\]
and that for all $\gdeg > \tdeg+1$, 
$P^n_{\gdeg,\tdeg}/n \to 0=p(b)\cdot\p{\mathrm{Bin}(b-1,\binprob)=a}$ in probability, as required.

We now assume $\mu_1(p)>2$, so that $\rho(p)=1/(\mu_1(p)-1)<1$. 
Since $p=(p_k,k \ge 1)$ is supported on the positive integers, 
\[
\sum_{\tdeg \ge 0} q(\tdeg) = \sum_{\tdeg \ge 0} \sum_{\gdeg=\tdeg+1}^\infty p(\gdeg)\p{\mathrm{Bin}(\gdeg-1,\binprob)=\tdeg}
 = 
\sum_{\gdeg \ge 1} p(\gdeg)=1\, . 
\]
Recalling that $Q_{\cseq^n}(\tdeg) = \sum_{\gdeg > \tdeg} P^n_{\gdeg,\tdeg}$, 
to show that $n^{-1}Q_{\cseq^n}(\tdeg) \to q(\tdeg)$ in probability, it therefore suffices to prove that 
$P^n_{\gdeg+1,\tdeg}/n \to p(\gdeg+1)\cdot \p{\mathrm{Bin}(\gdeg,\binprob)=\tdeg}$ for all $0 \le \tdeg \le \gdeg$, and we now turn to this. 

Since $\mu_1(p^n) \to \mu_1(p)$, it follows that 
$|\sum_{i=1}^n d^n(i) - \mu_1(p) n| = n|\mu_1(p^n)-\mu_1(p)| = o(n)$ as $n \to \infty$, so $s^n = (1+o(1))n(\mu_1(p)-1)$. Thus, for any $\gdeg \ge 1$ and $0 \le \tdeg \le \gdeg$ we have 
\begin{align*}
{\gdeg \choose \tdeg} \cdot {s^n-\gdeg \choose n-1-\tdeg}{s^n \choose n-1}^{-1}
& = {\gdeg \choose \tdeg}\frac{(n-1)_\tdeg(s^n-(n-1))_{\gdeg-\tdeg}}{(s^n)_\gdeg}\\
& = (1-o(1)){\gdeg \choose \tdeg} \frac{n^\tdeg ((\mu_1(p)-2)n)^{\gdeg-\tdeg}}{((\mu_1(p)-1) n)^\gdeg} \\
& = (1-o(1)){\gdeg \choose \tdeg} \frac{(\mu_1(p)-2)^{\gdeg-\tdeg}}{(\mu_1(p)-1)^\gdeg} \\
& = (1-o(1))\p{\mathrm{Bin}(\gdeg,\binprob)=\tdeg}. 
\end{align*}
Using (\ref{eq:mu_ab_formula}) we thus have 
\[
\e P^n_{\gdeg+1,\tdeg} = (1-o(1)) np^n(\gdeg+1) 
\p{\mathrm{Bin}(\gdeg,\binprob)=\tdeg}
=(1-o(1)) np(\gdeg+1) 
\p{\mathrm{Bin}(\gdeg,\binprob)=\tdeg}\, ,
\]
so again, applying Lemma~\ref{lem:degree_change_control} in the case that $p(b+1)>0$, and applying Markov's inequality in the case that $p(b+1)=0$, we obtain that, as $n \to \infty$, 
\[
\frac{P^n_{\gdeg+1,\tdeg}}{n} \convp p(\gdeg+1)\p{\mathrm{Bin}(\gdeg,\binprob)=\tdeg}\, ,
\]
as required.
\end{proof}
We now turn to controlling the joint degrees of pairs of tree-adjacent vertices in tree-weighted graphs. 
Given a degree sequence $\dseq=(d(1),\ldots,d(n))$ and a tree $t$ with $\vertices(t)=[n]$, 
for integers $\gdeg_1,\gdeg_2,\tdeg_1,\tdeg_2$ let 
\begin{align*}
& R_{\gdeg_1,\gdeg_2,\tdeg_1,\tdeg_2}(t,\dseq) \\
& = \#\{u \in \vertices(t)\setminus\{r(t)\}: d(u)=\gdeg_1, d(\parent(u))=\gdeg_2, c_{t}(u)=\tdeg_1, c_{t}(\parent(u))=\tdeg_2\}\\
& = \sum_{u \in \vertices(t)\setminus \{r(t)\}} 
\I{d(u)=\gdeg_1,c_{t}(u)=\tdeg_1}\cdot 
\I{d(\parent(u))=\gdeg_2,c_{t}(\parent(u))=\tdeg_2}\, .
\end{align*}
If $(g,t,\gamma)$ is a tree-rooted graph and $g$ has degree sequence $\dseq$, then  
$R_{\gdeg_1,\gdeg_2,\tdeg_1,\tdeg_2}(t,\dseq)$ counts the number of edges $xy$ of~$t$ with $y=\parent(x)$ such that $c_t(x)=\tdeg_1$, $c_t(y)=\tdeg_2$ and $d_g(x)=\gdeg_1$, $d_g(y)=\gdeg_2$. 
\begin{prop} 
\label{prop:r_asymptotic}
Under the assumptions of Theorem~\ref{thm:main}, 
for any integers $0 \le \tdeg_1 < \gdeg_1$ and $0 \le \tdeg_2 < \gdeg_2$, as $n \to \infty$, 
\[
\frac{R_{\gdeg_1,\gdeg_2,\tdeg_1,\tdeg_2}(T(\dseq^n),\dseq^n)}{n} 
\to \tdeg_2 p(\gdeg_2)\p{\mathrm{Bin}(\gdeg_2-1,\binprob)=\tdeg_2}
\cdot p(\gdeg_1)\p{\mathrm{Bin}(\gdeg_1-1,\binprob)=\tdeg_1}
\]
in probability, where $\rho=1/(\mu_1(p)-1)$. \end{prop}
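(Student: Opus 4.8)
The plan is to reduce the statement to Proposition~\ref{prop:degree_concentration} (applied twice, once for $u$ and once for $\parent(u)$) by exploiting the conditional independence structure in Pitman's coalescent given the child sequence. Recall that $d^n_-(u) = d^n(u) - c^n(u) - 1$ for non-root $u$, so the event $\{d^n_-(u)=k\}$ is equivalent to $\{d^n(u) = c^n(u)+k+1\}$; thus $A^n(k,\ell)$ counts tree edges $uv$ (with $v=\parent(u)$) such that $(d^n(u),c^n(u))$ and $(d^n(v),c^n(v))$ lie in prescribed ``diagonal'' sets. Summing over those diagonals, we can write $A^n(k,\ell) = \sum_{\tdeg_1,\tdeg_2} R_{\tdeg_1+k+1,\tdeg_2+\ell+1,\tdeg_1,\tdeg_2}(T(\dseq^n),\dseq^n)$, which already motivates the form of $\alpha(k,\ell)$. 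So the real content is Proposition~\ref{prop:r_asymptotic}, and I will prove that.

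The first main step is a \emph{first-moment computation} for $R_{\gdeg_1,\gdeg_2,\tdeg_1,\tdeg_2}(T(\dseq^n),\dseq^n)$. Here I would use the corollary that $T(\dseq^n)$ is a uniformly random rooted tree with child sequence $\cseq^n$, together with the exchangeability in Proposition~\ref{prop:tree_formula}. Conditioning first on the child statistics vector $Q_{\cseq^n}$ and on the joint statistics $P^n_{\gdeg,\tdeg}$ (how many vertices have degree $\gdeg$ and $\tdeg$ children), a uniformly random labeled tree with a given child sequence has the property that, for a uniformly chosen non-root vertex $u$, the parent $\parent(u)$ is --- up to lower-order corrections --- a vertex chosen with probability proportional to its number of children $c^n(\parent(u))$. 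More precisely, the number of (vertex, parent-edge) incidences at a vertex $w$ is exactly $c_t(w)$, so the expected value of $R_{\gdeg_1,\gdeg_2,\tdeg_1,\tdeg_2}$, conditional on the statistics, is
\[
\mathbf{E}\Big(R_{\gdeg_1,\gdeg_2,\tdeg_1,\tdeg_2}(T(\dseq^n),\dseq^n) \,\Big|\, (P^n_{\gdeg,\tdeg})\Big)
= (1+o(1))\, \tdeg_2\, P^n_{\gdeg_2,\tdeg_2} \cdot \frac{P^n_{\gdeg_1,\tdeg_1}}{n},
\]
the factor $\tdeg_2 P^n_{\gdeg_2,\tdeg_2}$ counting parent-incidences available at degree-$(\gdeg_2,\tdeg_2)$ vertices, and $P^n_{\gdeg_1,\tdeg_1}/n$ the chance the child end has the right statistics (divided by $n-1 \approx n$ total non-root vertices). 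I would derive this by a direct counting argument using \eqref{eq:treecount}, or by appealing to a known description of the local structure of uniform trees with a fixed child sequence. Combining with Proposition~\ref{prop:degree_concentration}, which gives $n^{-1}P^n_{\gdeg,\tdeg} \convp p(\gdeg)\p{\mathrm{Bin}(\gdeg-1,\rho)=\tdeg}$, yields the right first moment: $n^{-1}\mathbf{E}(R \mid \cdots) \convp \tdeg_2 p(\gdeg_2)\p{\mathrm{Bin}(\gdeg_2-1,\rho)=\tdeg_2}\cdot p(\gdeg_1)\p{\mathrm{Bin}(\gdeg_1-1,\rho)=\tdeg_1}$.

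The second main step is \emph{concentration}: showing $R_{\gdeg_1,\gdeg_2,\tdeg_1,\tdeg_2}/n$ is close to its conditional expectation in probability. I would use a second-moment / variance bound. Since all the relevant degrees $\gdeg_1,\gdeg_2$ are fixed, vertices of these degrees contribute $O(1)$ each to the sum, and the dependence between two summands $\I{\cdots}_u \I{\cdots}_{u'}$ for distinct non-root $u,u'$ is weak: for a uniform tree with fixed child sequence, the joint law of $(\parent(u),\parent(u'))$ decorrelates from the product law with error $O(1/n)$, by an argument analogous to the falling-factorial cancellations in the proof of Lemma~\ref{lem:degree_change_control}. This gives $\va(R \mid \cdots) = o(n^2)$, hence $n^{-1}R \to n^{-1}\mathbf{E}(R\mid\cdots)$ in probability, by Chebyshev. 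Alternatively, one can condition on the set of non-root half-edges (Proposition~\ref{prop:tree_formula}(3)) and the resulting assignment of half-edges to parents, reducing to an occupancy/matching concentration estimate.

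\textbf{Main obstacle.} The delicate point is making the heuristic ``$\parent(u)$ is a size-biased-by-children vertex'' precise with controlled error, and simultaneously obtaining a good enough variance bound --- we need the error terms from both the first-moment count and the pairwise-correlation estimate to be $o(n)$ and $o(n^2)$ respectively, uniformly enough to pass through Proposition~\ref{prop:degree_concentration}'s convergences in probability. Handling the (possibly unbounded) number of vertices of degree/children statistics outside a fixed window requires the second-moment tail bound on $\mu_2(n^{-1}Q_{\cseq^n})$ from Proposition~\ref{prop:degree_concentration}, which lets us truncate; I would use this truncation both to control the contribution of high-degree vertices and to justify interchanging limits. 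Once Proposition~\ref{prop:r_asymptotic} is in hand, Proposition~\ref{prop:akl_conv} follows by summing over the diagonals $\tdeg_1,\tdeg_2$, using the same $\mu_2$-truncation to handle the infinite sums and to get the $\sum k\ell A^n(k,\ell)$ statement, together with \eqref{eq:klsum_finite}.
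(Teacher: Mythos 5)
Your overall strategy coincides with the paper's: compute the first and second (factorial) moments of $R_{\gdeg_1,\gdeg_2,\tdeg_1,\tdeg_2}$ conditionally on the joint degree/child statistics, identify the limits via Proposition~\ref{prop:degree_concentration}, and conclude by Chebyshev; your target formula $\E{R\mid\cdot}=(1+o(1))\,\tdeg_2 P^n_{\gdeg_2,\tdeg_2}P^n_{\gdeg_1,\tdeg_1}/n$ is exactly the paper's identity (\ref{eq:firstmoment_conditional}), up to the harmless indicator correction in the diagonal case $(\gdeg_1,\tdeg_1)=(\gdeg_2,\tdeg_2)$. The gap is that the two computations which constitute essentially all of the technical content are asserted rather than derived: you say you ``would derive'' the size-biased-parent identity by a direct counting argument from (\ref{eq:treecount}) or by appeal to ``a known description of the local structure of uniform trees with a fixed child sequence,'' and likewise the $O(1/n)$ pairwise decorrelation of $(\parent(u),\parent(u'))$ is claimed by analogy with Lemma~\ref{lem:degree_change_control}. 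Neither claim is a corollary of anything already established, and no reference is supplied, so as written the proof of the proposition is missing precisely where the work lies.

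Both claims are in fact true and your route is viable: for a uniformly random rooted tree with a fixed child sequence one has the exact identity $\p{\parent(u)=v}=c_t(v)/(n-1)$ for fixed $u\ne v$, with an analogous near-product formula for pairs, and these can be proved by direct enumeration. The paper instead extracts them from the coalescent itself: conditioning on the set $\cS^n$ of non-root half-edges and counting execution paths shows that the pairs $(r_i,s_i)$ are exchangeable in $i$ (there are $((n-2)!)^2$ paths consistent with fixing one pair and $((n-3)!)^2$ with fixing two), which converts $\E{R\mid\cS^n}$ into $(n-1)\Cprob{v(r_1)\in V^n_{\gdeg_1,\tdeg_1},\,v(s_1)\in V^n_{\gdeg_2,\tdeg_2}}{\cS^n}$ and similarly for the second factorial moment; since $s_1$ is uniform on $\cS^n$ and $v(r_1)$ is uniform on the remaining components, the moments come out in closed form with no asymptotic error to control. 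If you carry out your version, the points needing care are the diagonal case (whence the $(\cdot)_+$ and indicator corrections in the paper's formulas), the degenerate case where the limiting constant vanishes (use Markov rather than Chebyshev there), and note that the $\mu_2$-truncation you invoke is not actually needed for this proposition, since $\gdeg_1,\gdeg_2$ are fixed; it only enters in Proposition~\ref{prop:akl_conv} for the $\sum_{k,\ell}k\ell A^n(k,\ell)$ statement, which the paper handles by an exchangeability-plus-Markov tail bound.
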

We introduce two pieces of notation before beginning the proof. For a half-edge $h$ we write $v(h)$ for the vertex incident to $h$. Also, for $r \in \R$ we write $r_+ := \max(r,0)$. 
\begin{proof}
First, if $\tdeg_2=0$ then the right-hand side is zero, and also $R_{\gdeg_1,\gdeg_2,\tdeg_1,\tdeg_2}(T(\dseq^n),\dseq^n)=0$, since if  $v=\parent(u) \in T(\dseq^n)$ then $c_{T(\dseq^n)}(v) \ge 1$. The result thus holds trivially when $\tdeg_2=0$, and we assume hereafter that $\tdeg_2 \ge 1$. For the remainder of the proof we write $R_{\gdeg_1,\gdeg_2,\tdeg_1,\tdeg_2}=R_{\gdeg_1,\gdeg_2,\tdeg_1,\tdeg_2}(T(\dseq^n),\dseq^n)$ for succinctness. 

Let $\cH$ be a fixed, size-$(n-1)$ subset of $S^n := \bigcup_{1 \le i \le n} \{i1,\ldots,i(d^n(i)-1)\}$. 
Write $\cS^n=\{s_1,\ldots,s_{n-1}\}$ for the (unordered) set of non-root half-edges of $T(\dseq^n)$.
We now show that for any half edge $h \in \cH$ and any root half-edge $r$ with $v(r) \neq v(h)$, for all $1 \le i \le n-1$, 
\begin{equation}\label{eq:exchangeability_1}
\p{(r_i,s_i)=(r,h)~|~\cS^n=\cH} = \p{(r_1,s_1)=(r,h)~|~\cS^n=\cH} \, .
\end{equation}
To see this, note that by the second assertion of Proposition~\ref{prop:tree_formula}, the number of execution paths with $\cS^n=\cH$ is $((n-1)!)^2$. We claim that for any $i \in [n-1]$, the number of execution paths with $\cS^n=\cH$ which additionally satisfy that $(r_i,s_i)=(r,h)$ is $((n-2)!)^2$. As this number does not depend on $i \in [n-1]$, the displayed identity follows from this claim. 

To prove the claim, simply note that there are $(n-2)!$ possible orderings of $\cH$ consistent with the constraint that $s_i=h$. Having fixed such an ordering $(h_1,\ldots,h_{n-1})$, for each $j \in [n-1]$ with $j \ne i$, if $s_k=h_k$ for $1 \le k < j$ then, excluding $r_i$ there are $n-j-\I{j<i}$ unpaired root half-edges in components different from that of $s_j$, and any such root half-edge may be chosen as $r_j$. Thus the number of execution paths with $\cS^n=\cH$ and such that $(r_i,s_i)=(r,h)$ is $(n-2)! \cdot \prod_{j \in [n-1]\setminus \{i\}}(n-j-\I{j < i}) = ((n-2)!)^2$.

Now fix a second non-root half-edge $h' \ne h$ and a second root half-edge $r'\ne r$ not incident to the same vertex as $h'$. 
Then a similar argument to the one leading to (\ref{eq:exchangeability_1}) shows that that for any $1 \le i < j \le n$, 
\begin{align}\label{eq:exchangeability_2}
& \p{(r_i,s_i)=(r,h),(r_j,s_j)=(r',s')~|~\cS^n=\cH} \nonumber\\
& = \p{(r_1,s_1)=(r,h),(r_2,s_2)=(r',s')~|~\cS^n=\cH} \, .
\end{align}
In the current case, the number of execution paths leading to the events in both the left- and right-hand probabilities is $((n-3)!)^2$. 

We will next use the above identities in order to perform first and second moment computations. 
For any set $H \subset S^n$, for $0 \le \tdeg < \gdeg$
let $V^n_{\gdeg,\tdeg}(H) = \{i \in [n]: d^n(i)=\gdeg, |H \cap \{i1,\ldots,i(d^n(i)-1)|=\tdeg\}$. Note that $V^n_{\gdeg,\tdeg}(\cS^n)$ is simply the set of nodes with degree $\gdeg$ in $G(\dseq^n)$ and with $a$ children in $T(\dseq^n)$; so  $P^n_{\gdeg,\tdeg}=|V^n_{\gdeg,\tdeg}(\cS^n)|$. 

Fix a non-root node $u \in T(\dseq^n)$, and let $m\in [n-1]$ be such that $e_m = \{\parent(u),u\}$. Then $v(r_m)=u$ and $v(s_m)=\parent(u)$, so $u \in R_{\gdeg_1,\gdeg_2,\tdeg_1,\tdeg_2}$ if and only if $v(r_m) \in V^n_{\gdeg_1,\tdeg_1}(\cS^n)$ and $v(s_m)\in V^n_{\gdeg_2,\tdeg_2}(\cS^n)$. 
By (\ref{eq:exchangeability_1}), it follows that 
\begin{align} 
& \E{R_{\gdeg_1,\gdeg_2,\tdeg_1,\tdeg_2}~|~\cS^n=\cH} \nonumber\\
& = (n-1) \p{v(r_1) \in V^n_{\gdeg_1,\tdeg_1}(\cS^n), v(s_1) \in V^n_{\gdeg_2,\tdeg_2}(\cS^n)~|~\cS^n=\cH}.
\label{eq:firstmoment_ident}
\end{align}
Likewise, by (\ref{eq:exchangeability_2}) it follows that
\begin{align} 
& \E{{R_{\gdeg_1,\gdeg_2,\tdeg_1,\tdeg_2} \choose 2}~|~\cS^n=\cH} \nonumber\\
& = {n-1 \choose 2} \p{v(r_1),v(r_2) \in V^n_{\gdeg_1,\tdeg_1}(\cS^n),v(s_1),v(s_2) \in V^n_{\gdeg_2,\tdeg_2}(\cS^n)~|~\cS^n=\cH}.\label{eq:secondmoment_ident}
\end{align}
We develop the latter two identities in turn.

For integers $0 \le \tdeg < \gdeg$, the number of non-root half-edges $h \in \cS^n$ with $v(h) \in V^n_{\gdeg,\tdeg}(\cS^n)$ is $\tdeg \cdot |V^n_{\gdeg,\tdeg}(\cS^n)|$, and the number of root half-edges $h$ with $v(h)\in V^n_{\gdeg,\tdeg}(\cS^n)$ is just $|V^n_{\gdeg,\tdeg}(\cS^n)|$. 
Conditionally given that $\cS^n=\cH$, the half-edge $s_1$ is a uniformly random element of $\cH$, so
\[
\p{v(s_1) \in V^n_{\gdeg_2,\tdeg_2}(\cS^n)~|~\cS^n=\cH} = \frac{\tdeg_2|V^n_{\gdeg_2,\tdeg_2}(\cH)|}{|\cH|} = \frac{\tdeg_2|V^n_{\gdeg_2,\tdeg_2}(\cH)|}{n-1}\, .
\]
Having chosen $s_1$, if $v(s_1)=v$ then $v(r_1)$ is a uniformly random element of $[n]\setminus \{v\}$, so 
\[
\p{ v(r_1) \in V^n_{\gdeg_1,\tdeg_1}(\cS^n)~|~\cS^n=\cH,v(s_1) \in V^n_{\gdeg_2,\tdeg_2}(\cS^n)}
= \frac{(|V^n_{\gdeg_1,\tdeg_1}(\cH)|-\I{(\gdeg_1,\tdeg_1)\!=\!(\gdeg_2,\tdeg_2)})_+}{n-1}. 
\]
Using these two identities in  (\ref{eq:firstmoment_ident}), it follows that 
\[
(n-1)\E{R_{\gdeg_1,\gdeg_2,\tdeg_1,\tdeg_2}~|~\cS^n=\cH}
= \tdeg_2|V^n_{\gdeg_2,\tdeg_2}(\cH)|(|V^n_{\gdeg_1,\tdeg_1}(\cH)|-\I{(\gdeg_1,\tdeg_1)\!=\!(\gdeg_2,\tdeg_2)})_+\, ,
\]
so since $|V^n_{\gdeg,\tdeg}(\cS^n)|=P^n_{\gdeg,\tdeg}$ for all $0 \le \tdeg < \gdeg$, by Proposition~\ref{prop:degree_concentration} we have 
\begin{align}
\E{\frac{R_{\gdeg_1,\gdeg_2,\tdeg_1,\tdeg_2}}{n}~|~\cS^n} 
&= \frac{1}{n(n-1)}\tdeg_2 P^n_{\gdeg_2,\tdeg_2}(P^n_{\gdeg_1,\tdeg_1}-\I{(\gdeg_1,\tdeg_1)\!=\!(\gdeg_2,\tdeg_2)})
\nonumber\\
& \convp  \tdeg_2 p(\gdeg_2)\p{\mathrm{Bin}(\gdeg_2-1,\binprob)=\tdeg_2}
\cdot p(\gdeg_1)\p{\mathrm{Bin}(\gdeg_1-1,\binprob)=\tdeg_1}\, .\label{eq:firstmoment_conditional}
\end{align}

For the second moment calculation, we need to additionally compute 
\begin{align} \label{eq:big ol' conditional thing}
\p{v(r_2) \in V^n_{\gdeg_1,\tdeg_1}(\cS^n),v(s_2) \in V^n_{\gdeg_2,\tdeg_2}(\cS^n)~|~\cS^n=\cH,
v(r_1) \in V^n_{\gdeg_1,\tdeg_1}(\cS^n), v(s_1) \in V^n_{\gdeg_2,\tdeg_2}(\cS^n)}\, .
\end{align}
Under the conditioning in (\ref{eq:big ol' conditional thing}), the number of non-root half-edges $h \in \cS^n\setminus \{s_1\}$ with $v(h) \in V^n_{\gdeg_2,\tdeg_2}(\cH)$ is $(\tdeg_2\cdot |V^n_{\gdeg_2,\tdeg_2}(\cH)|-1)_+$, so 
\begin{align*}
& \p{v(s_2) \in V^n_{\gdeg_2,\tdeg_2}(\cS^n)~|~\cS^n=\cH,
v(r_1) \in V^n_{\gdeg_1,\tdeg_1}(\cS^n), v(s_1) \in V^n_{\gdeg_2,\tdeg_2}(\cS^n)
}\\
& = \frac{(\tdeg_2\cdot |V^n_{\gdeg_2,\tdeg_2}(\cH)|-1)_+}{n-2}\, .
\end{align*}
Now suppose 
that $\cS^n=\cH,
v(r_1) \in V^n_{\gdeg_1,\tdeg_1}(\cS^n), v(s_1) \in V^n_{\gdeg_2,\tdeg_2}(\cS^n)$, and 
that $v(s_2) \in V^n_{\gdeg_2,\tdeg_2}(\cH)$, and consider the number of possible values for $r_2$. We claim that the number of unpaired root half-edges $h$ with $v(h) \in V^n_{\gdeg_1,\tdeg_1}(\cS^n)$ such that $v(h)$ is in a component different from $v(s_2)$ is 
\[
(|V^n_{\gdeg_1,\tdeg_1}(\cH)|-1-\I{(\gdeg_1,\tdeg_1)\!=\!(\gdeg_2,\tdeg_2)})_+. 
\] 
To see this, note that if $(\gdeg_1,\tdeg_1)=(\gdeg_2,\tdeg_2)$ and 
either $v(s_2)=v(s_1)$ or $v(s_2)=v(r_1)$, then we are precisely constrained constrained to choose $h$ so that $v(h) \in V^n_{\gdeg_1,\tdeg_1}(\cH)\setminus \{v(r_1),v(s_1)\}$. On the other hand, if $(\gdeg_1,\tdeg_1)=(\gdeg_2,\tdeg_2)$ and $v(s_2) \not\in \{v(r_1),v(s_1)\}$ then we are constrained to choose $h$ so that $v(h) \in V^n_{\gdeg_1,\tdeg_1}(\cH)\setminus \{v(r_1),v(s_2)\}$. Both cases agree with the above formula. When  $(\gdeg_1,\tdeg_1)=(\gdeg_2,\tdeg_2)$, the claim is straightforward, since in that case we are only constrained to choose $h$ so that $v(h) \in V^n_{\gdeg_1,\tdeg_1}(\cH)\setminus\{v(r_1)\}$. 
It follows that 
\begin{align*}
& \p{v(r_2) \in V^n_{\gdeg_1,\tdeg_1}(\cS^n)~|~\cS^n=\cH,
v(s_2) \in V^n_{\gdeg_2,\tdeg_2}(\cS^n), v(r_1) \in V^n_{\gdeg_1,\tdeg_1}(\cS^n), v(s_1) \in V^n_{\gdeg_2,\tdeg_2}(\cS^n)
}\\
& = 
\frac{(|V^n_{\gdeg_1,\tdeg_1}(\cH)|-1-\I{(\gdeg_1,\tdeg_1)\!=\!(\gdeg_2,\tdeg_2)})_+}{n-2}\, .
\end{align*}
Combining the above identities with (\ref{eq:secondmoment_ident}) yields that 
\begin{align*}
& 2(n-1)(n-2) \E{{R_{\gdeg_1,\gdeg_2,\tdeg_1,\tdeg_2} \choose 2}~|~\cS^n=\cH} \\
& = \tdeg_2|V^n_{\gdeg_2,\tdeg_2}(\cH)|\big(\tdeg_2|V^n_{\gdeg_2,\tdeg_2}(\cH)|-1\big)_+
\\
& \cdot 
\big(|V^n_{\gdeg_1,\tdeg_1}(\cH)|-\I{(\gdeg_1,\tdeg_1)\!=\!(\gdeg_2,\tdeg_2)}\big)_+
\big(|V^n_{\gdeg_1,\tdeg_1}(\cH)|-1-\I{(\gdeg_1,\tdeg_1)\!=\!(\gdeg_2,\tdeg_2)}\big)_+,
\end{align*}
so since $\tdeg_2 \ge 1$, Proposition~\ref{prop:degree_concentration} implies that
\begin{align*}
& \E{\frac{R_{\gdeg_1,\gdeg_2,\tdeg_1,\tdeg_2}(R_{\gdeg_1,\gdeg_2,\tdeg_1,\tdeg_2}-1)}{n^2}~|~\cS^n}\\
& = \frac{\tdeg_2 P^n_{\gdeg_2,\tdeg_2}(\tdeg_2 P^n_{\gdeg_2,\tdeg_2}-1\big)_+}{n(n-1)} 
\cdot \frac{(P^n_{\gdeg_1,\tdeg_1}-\I{(\gdeg_1,\tdeg_1)\!=\!(\gdeg_2,\tdeg_2)}\big)_+
(P^n_{\gdeg_1,\tdeg_1}-1-\I{(\gdeg_1,\tdeg_1)\!=\!(\gdeg_2,\tdeg_2)}\big)_+}{n(n-2)}\\
& 
\convp  \big(\tdeg_2 p(\gdeg_2)\p{\mathrm{Bin}(\gdeg_2-1,\binprob)=\tdeg_2}
\cdot p(\gdeg_1)\p{\mathrm{Bin}(\gdeg_1-1,\binprob)=\tdeg_1}\big)^2\, .
\end{align*}
Also, (\ref{eq:firstmoment_conditional}) implies that $\E{n^{-2}R_{\gdeg_1,\gdeg_2,\tdeg_1,\tdeg_2}~|~\cS^n} \convp 0$, which with the preceding asymptotic implies that 
\[
\E{\frac{R_{\gdeg_1,\gdeg_2,\tdeg_1,\tdeg_2}^2}{n^2}~|~\cS^n} \convp
\big(\tdeg_2 p(\gdeg_2)\p{\mathrm{Bin}(\gdeg_2-1,\binprob)=\tdeg_2}
\cdot p(\gdeg_1)\p{\mathrm{Bin}(\gdeg_1-1,\binprob)=\tdeg_1}\big)^2\, .
\]
Combining this with (\ref{eq:firstmoment_conditional}) gives that 
\[
\E{\pran{\frac{R_{\gdeg_1,\gdeg_2,\tdeg_1,\tdeg_2}}{n}}^2~|~\cS^n}
-
\pran{\E{\frac{R_{\gdeg_1,\gdeg_2,\tdeg_1,\tdeg_2}}{n}~|~\cS^n}}^2 \convp 0\, ;
\]
the conditional Chebyshev's inequality then gives that for all $\eps > 0$, 
\[
\p{\left|\frac{R_{\gdeg_1,\gdeg_2,\tdeg_1,\tdeg_2}}{n}-\E{\frac{R_{\gdeg_1,\gdeg_2,\tdeg_1,\tdeg_2}}{n}~|~\cS^n}\right|>\eps~|~\cS^n} \convp 0. 
\]
Taking expectations on the left of the previous inequality to remove the conditioning, and again using (\ref{eq:firstmoment_conditional}), this time to replace the term $\E{\frac{R_{\gdeg_1,\gdeg_2,\tdeg_1,\tdeg_2}}{n}~|~\cS^n}$ in the probability by the constant $C:=\tdeg_2 p(\gdeg_2)\p{\mathrm{Bin}(\gdeg_2-1,\binprob)=\tdeg_2}
\cdot p(\gdeg_1)\p{\mathrm{Bin}(\gdeg_1-1,\binprob)=\tdeg_1}$, we obtain that 
\[
\p{\left|\frac{R_{\gdeg_1,\gdeg_2,\tdeg_1,\tdeg_2}}{n}-C\right|>\eps} \to 0,
\]
as required. 
\end{proof}
\begin{proof}[Proof of Proposition~\ref{prop:akl_conv}]
We may reexpress $A^n(k,\ell)$ as
\begin{align*}
A^n(k,\ell) 
& = 
\sum_{\tdeg_1,\tdeg_2 \ge 0} 
\sum_{u\in \vertices(T(\dseq^n))} \I{r(T(\dseq^n)) \not\in \{u,\parent(u)\}}\\
& \qquad\qquad\qquad\qquad\cdot
\I{\dseq^n(u)=k+\tdeg_1+1,\cseq^n(u)=\tdeg_1}\\
& \qquad\qquad\qquad\qquad\cdot 
\I{\dseq^n(\parent(u))=\ell+\tdeg_2+1,
\cseq^n(\parent(u))=\tdeg_2}\\
& + 
\sum_{\tdeg_1,\tdeg_2 \ge 0} 
\sum_{u\in \vertices(T(\dseq^n))} 
\I{\parent(u)=r(T(\dseq^n))}\\
& \qquad\qquad\qquad\qquad\cdot
\I{\dseq^n(u)=k+\tdeg_1 + 1,\cseq^n(u)=\tdeg_1}\\
& \qquad\qquad\qquad\qquad\cdot 
\I{\dseq^n(\parent(u))=\ell+\tdeg_2,
\cseq^n(\parent(u))=\tdeg_2}
\end{align*}
For fixed $\tdeg_1,\tdeg_2 \ge 0$, 
if we replace $\I{r(T(\dseq^n)) \not\in \{u,\parent(u)\}}$ by $\I{u \ne r(T(\dseq^n))}$ in the first double sum, then the inner sum is simply $R_{k+\tdeg_1+1,\ell+\tdeg_2+1,\tdeg_1,\tdeg_2}$. It follows that 
\begin{align*}
A^n(k,\ell) 
& = 
\sum_{\tdeg_1,\tdeg_2 \ge 0} 
R_{k+\tdeg_1+1,\ell+\tdeg_2+1,\tdeg_1,\tdeg_2} \\
& + 
\sum_{\tdeg_1,\tdeg_2 \ge 0} 
\sum_{u\in \vertices(T(\dseq^n))} 
\I{\parent(u)=r(T(\dseq^n))}\\
& \qquad\qquad\qquad\qquad\cdot
\I{\dseq^n(u)=k+\tdeg_1 + 1,\cseq^n(u)=\tdeg_1}\\
& \qquad\qquad\qquad\qquad\cdot 
\I{\dseq^n(\parent(u))=\ell+\tdeg_2,
\cseq^n(\parent(u))=\tdeg_2}
\\
& - 
\sum_{\tdeg_1,\tdeg_2 \ge 0} 
\sum_{u\in \vertices(T(\dseq^n))} 
\I{\parent(u)=r(T(\dseq^n))}\\
& \qquad\qquad\qquad\qquad\cdot
\I{\dseq^n(u)=k+\tdeg_1+1,\cseq^n(u)=\tdeg_1}\\
& \qquad\qquad\qquad\qquad\cdot 
\I{\dseq^n(\parent(u))=\ell+\tdeg_2+1,
\cseq^n(\parent(u))=\tdeg_2}\, .
\end{align*}
But each of the last two double sums is bounded by $c^n(r(T(\dseq^n)))$, since they both count each child of the root at most once. Under the assumptions of Theorem~\ref{thm:main}, by Fact~\ref{fact:maxdeg} we have $c^n(r(T(\dseq^n))) \le \max_{1 \le i \le n} d^n(i) = o(n^{1/2})$,
 so the preceding identity gives
\[
\left|A^n(k,\ell) 
 -
\sum_{\tdeg_1,\tdeg_2 \ge 0} 
R_{k+\tdeg_1+1,\ell+\tdeg_2+1,\tdeg_1,\tdeg_2}\right|=o(n^{1/2}). 
\]
Since 
\begin{align*}
&n^{-1}
R_{k+\tdeg_1+1,\ell+\tdeg_2+1,\tdeg_1,\tdeg_2}\\
 \\
& \convp\tdeg_2p(\ell+\tdeg_2+1)\p{\mathrm{Bin}(\ell+\tdeg_2,\rho)=\tdeg_2}\cdot p(k+\tdeg_1+1)\p{\mathrm{Bin}(k+\tdeg_1,\rho)=\tdeg_1}
 \end{align*}
by Proposition~\ref{prop:r_asymptotic}, and summing the right-hand side of the last expression over $a_1,a_2 \ge 0$ gives $\alpha(k,l)$, it follows that for any $\eps > 0$, 
\[
\p{A^n(k,l)/n \ge \alpha(k,l)-\eps} \to 1. 
\]
But also $n^{-1} \sum_{k,l \ge 0} A^n(k,l) = |\edges(T(\dseq^n))| = (n-1)/n \to 1$; so since $\sum_{k,l\ge 0} \alpha(k,l)=1$, we must in fact have that
\[
\frac{A^n(k,l)}{n} \convp \alpha(k,l)
\]
for all $k,l \ge 0$, as required. 

It remains to show that $n^{-1}\sum_{k,l \ge 0} kl A^n(k,l) \convp \sum_{k,l\ge 0} kl\alpha(k,l)$. 
For this we will exploit the exchangeability of Pitman's additive coalescent. Recall the notation $v(h)$ for the vertex incident to half-edge $h$. 
Note that for any $M \in \N$ we have 
\begin{align*}
\sum_{k,l\ge 0} klA^n(k,l) - \sum_{0 \le k,l\le M} klA^n(k,l) 
& = \sum_{uv \in e(T^n)} d^n_-(u)d^n_-(v) \I{\max(d^n_-(u),d^n_-(v)) > M} \\
& 
\le \sum_{uv \in e(T^n)} d^n(u)d^n(v) \I{\max(d^n(u),d^n(v)) > M} \\
& = 
\sum_{i=1}^{n-1} d^n(v(r_i))d^n(v(s_i)) \I{\max(d^n(v(r_i)),d^n(v(s_i))) > M}. 
\end{align*}
Now, by Proposition~\ref{prop:tree_formula}~(3) and the identity (\ref{eq:exchangeability_1}),  for any $1 \le i \le n-1$ we have 
\begin{align*}
& \E{d^n(v(r_i))d^n(v(s_i)) \I{\max(d^n(v(r_i)),d^n(v(s_i))) > M}} \\
& = \E{d^n(v(r_1))d^n(v(s_1)) \I{\max(d^n(v(r_1)),d^n(v(s_1))) > M}},
\end{align*}
and by the definition of Pitman's additive coalescent we have 
\begin{align*}
& \E{d^n(v(r_1))d^n(v(s_1))\I{\max(d^n(v(r_1)),d^n(v(s_1))) > M}}
\\
& = 
\sum_{u \in [n]}\sum_{v \in [n]}
d^n(u)d^n(v)\I{\max(d^n(u),d^n(v))) > M}\p{v(s_1)=u,v(r_1)=v}\\
& = 
\sum_{u \in [n]}\sum_{v \in [n]}
d^n(u)d^n(v)\I{\max(d^n(u),d^n(v)) > M}
\cdot \frac{d^n(u)}{n\mu_1(p^n)} \cdot \frac{1}{n-1}\, ,
\end{align*}
where we have used that $\sum_{i \in [n]} d^n(i)=n\mu_1(p^n)$. Next, 
\begin{align*}
& \sum_{u \in [n]}\sum_{v \in [n]}
(d^n(u))^2d^n(v)\I{\max(d^n(u),d^n(v)) > M}\\
& \le 
\pran{
\Big(\sum_{u \in [n]: d^n(u)>M} (d^n(u))^2\Big)\cdot \sum_{v \in [n]} d^n(v) + 
\Big(\sum_{v \in [n]:d^n(v) > M} d^n(v)\Big) \cdot \sum_{u \in [n]} (d^n(u))^2}\, ,\\
& = n\mu_1(p^n)\cdot \sum_{u \in [n]: d^n(u)>M} (d^n(u))^2\ +\  n \mu_2(p^n)\cdot \sum_{v \in [n]:d^n(v) > M} d^n(v)\, .
\end{align*}
Since $p^n \to p$, $\mu_1(p^n) \to \mu_1(p)$ and $\mu_2(p_n) \to \mu_2(p)$, for any $\delta > 0$ we may choose $M=M(\delta)$ sufficiently large so that
$\sum_{u \in [n]: d^n(u)>M}(d^n(u))^2\ < \delta n$ and $\sum_{v \in [n]:d^n(v) > M} d^n(v) < \delta n$, for all $n \ge 1$. For such $M$, the previous bound and the two identities which precede it yield that 
\[
\E{\sum_{k,l\ge 0} klA^n(k,l) - \sum_{0 \le k,l\le M} klA^n(k,l) } 
 \le 
\frac{1}{\mu_1(p^n)} (\delta n \mu_1(p^n) + \delta n \mu_2(p^n)). 
\]
By Markov's inequality, it follows that for all $\eps > 0$ there is $M \in \N$ such that for all $n \in \N$, 
\[
\p{\sum_{k,l\ge 0} klA^n(k,l) - \sum_{0 \le k,l\le M} klA^n(k,l) > \eps n} < \eps. 
\]
Finally, since $n^{-1}A^n(k,l) \convp \alpha(k,l)$, it follows that for all $M \in \N$ we have 
\[
\frac{1}{n}\sum_{0 \le k,l\le M} klA^n(k,l) \convp \sum_{0 \le k,l\le M} kl\alpha(k,l)\, ,
\] 
so the preceding probability bound implies that 
\[
\frac{1}{n}\sum_{k,l\ge 0} klA^n(k,l) \convp 
\lim_{M \to \infty}\sum_{0 \le k,l \le M} kl\alpha(k,l) = \sum_{k,l \ge 0} kl\alpha(k,l)\, ,
\]
as required. 
\end{proof}

\section{Proof of Theorem~\ref{Poisson Asymptotics}} \label{app:2}
Let $H$ be a simple graph with vertex set $v(H)=[n]$, and let $G$ be a random graph with degree sequence $\dseq=(d(1),\ldots,d(n))$ sampled according to the configuration model. Recall the definitions of $\cL(G),\cM(G,H),\cN(G,H)$ and $L(G),M(G,H),N(G,H)$ from Section~\ref{sec:poisson_approx}. 
The first subsection will provide a quantitative approximation result for mixed moments of $L,M$ and $N$. In the second subsection, we will use this approximation to prove Theorem~\ref{Poisson Asymptotics}.

\subsection{Deterministic bounds on loops and multi-edges} \label{sec:simp_prob}
Our arguments in this section are based on and fairly closely parallel those from \cite[Chapter 7]{MR3617364}. 
We recall the falling factorial notation $(x)_{\ell} = x(x-1) \dots (x-\ell+1)$. In what follows, it is convenient to  define $(x)_{\ell} = 1$ if $\ell = 0$, and $(x)_{\ell} = 0$ if $\ell < 0$. 

\begin{prop} \label{Moment Bound}
Write $m = \frac{1}{2} \sum_{i=1}^n d(i)$, and write $d_{\max} = \max \{d(1),\dots,d(n)\}$. For any positive integers $q,r,s \in \N$,
\[
\left|\E{(L)_q(M)_r(N)_s} - \frac{(|\cL|)_q(|\cM|)_r(|\cN|)_s}{\prod_{i=0}^{q+2r+s-1} 2m - 1 - 2i} \right|
\leq
C(S_1+S_2)
\]
where $C$ is a constant depending only on $q,r$ and $s$, $S_1$ is defined by the following identity,
\begin{align*}
S_1 \prod_{i=0}^{q+2r+s-1} (2m - 1 - 2i)
=&
(|\cL|)_{q-2}(|\cM|)_r(|\cN|)_s \sum_{1 \leq u \leq n} d(u)^3\\
&+
(|\cL|)_{q-1}(|\cM|)_{r-1}(|\cN|)_s \sum_{1 \leq u \neq v \leq n} d(u)^3 d(v)^2\\
&+
(|\cL|)_{q-1}(|\cM|)_{r}(|\cN|)_{s-1} \sum_{uv \in e(H)} d(u)^2 d(v)\\
&+
(|\cL|)_{q}(|\cM|)_{r-2}(|\cN|)_{s} \sum_{\substack{1 \leq u \leq n \\ u \not\in \{v_1, v_2\}}} d(u)^3 d(v_1)^2 d(v_2)^2\\
&+
(|\cL|)_{q}(|\cM|)_{r-1}(|\cN|)_{s-1} \sum_{\substack{1 \leq u, v_1, v_2 \leq n \\ u,v_1,v_2 \text{distinct} \\ uv_2 \in e(H)}} d(u)^2 d(v_1)^2 d(v_2)\\
&+
(|\cL|)_{q}(|\cM|)_{r}(|\cN|)_{s-2} \sum_{\substack{uv_1,uv_2 \in e(H) \\ v_1 \neq v_2}} d(u) d(v_1) d(v_2),
\end{align*}
and $S_2$ is defined by
\begin{align*}
S_2
&=
(|\cL|)_{q}(|\cN|)_{s} \sum_{k=1}^{r-1} (|\cM|)_{r-k} \sum_{\ell = 0}^{k} d_{\max}^{2\ell} \prod_{i=0}^{q+2r+s-1-(2k-\ell)}\frac{1}{2m-1-2i}.
\end{align*}
\end{prop}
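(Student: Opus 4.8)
The statement is a moment-comparison estimate for the configuration model restricted by a fixed simple graph $H$, and the natural route is to follow the standard configuration-model moment computation (as in \cite[Chapter 7]{MR3617364}) while tracking carefully the extra error terms created by the coupling between loops, multiple edges, and $H$-edges. The core observation is that $(L)_q(M)_r(N)_s$ counts ordered $(q+r+s)$-tuples consisting of $q$ distinct loop-pair slots in $\cL$, $r$ distinct multi-edge quadruples in $\cM$, and $s$ distinct $H$-edge-pair slots in $\cN$, all of whose defining half-edge pairings are realized simultaneously in the uniform configuration. So the plan is: (i) expand $\E{(L)_q(M)_r(N)_s}$ as a sum over such tuples of the probability that all the required half-edge pairs are present; (ii) recall that in the configuration model the probability that a fixed set of $j$ disjoint half-edge pairs all occur is exactly $\prod_{i=0}^{j-1}(2m-1-2i)^{-1}$, where here $j = q+2r+s$ because each of the $r$ multi-edge terms in $\cM$ forces two half-edge pairings; (iii) split the sum over tuples according to whether the half-edges involved are \emph{all distinct} (the ``main'' tuples) or suffer some \emph{coincidence} (two of the named half-edges coincide, which is where $S_1$ comes from, or the half-edge pairs collide, which is where $S_2$ comes from).

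\textbf{Main term.} For tuples in which all named half-edges are distinct, the number of such tuples is $(|\cL|)_q(|\cM|)_r(|\cN|)_s$ up to a correction equal to the number of tuples with a coincidence; and each contributes exactly $\big(\prod_{i=0}^{q+2r+s-1}(2m-1-2i)\big)^{-1}$. This yields the stated main term $(|\cL|)_q(|\cM|)_r(|\cN|)_s\big/\prod_{i=0}^{q+2r+s-1}(2m-1-2i)$, and the task reduces to bounding the total contribution of the ``bad'' tuples.

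\textbf{The two error sources.} There are two qualitatively different ways a tuple can be bad. First, two distinct slots in $\cL\cup\cM\cup\cN$ can name overlapping half-edges (e.g. a loop at $u$ and a multi-edge incident to $u$ sharing a half-edge $ui$). Each such coincidence reduces the number of free slots by one or two, and the number of such degenerate tuples is controlled by sums of the form $\sum_u d(u)^3$, $\sum_{u\neq v}d(u)^3 d(v)^2$, $\sum_{uv\in e(H)}d(u)^2d(v)$, and their ``double-coincidence'' analogues — precisely the six lines defining $S_1$. Since a coincidence deletes a half-edge, the probability factor shrinks (more pairings to impose on fewer half-edges), which is why $S_1$ is \emph{defined} with the product $\prod_{i=0}^{q+2r+s-1}(2m-1-2i)$ multiplied through: one checks that after accounting for the reduced number of free pairings, the surviving probability weight is at most $\big(\prod(2m-1-2i)\big)^{-1}$ times a constant, so the bad-tuple contribution is $\le C\cdot S_1$. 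Second — and this is the part special to multi-edges — when we impose the two half-edge pairs defining a single element of $\cM$, the denominator $\prod_{i=0}^{q+2r+s-1}(2m-1-2i)$ is only the \emph{leading} term of the true probability; expanding the exact configuration-model probability for $q+2r+s$ forced pairs produces lower-order corrections where some of the $k$ multi-edge pairs collapse (fewer effective constraints). Bounding the number of ways this happens by $d_{\max}^{2\ell}$ for the collapsing degrees and adjusting the product of $(2m-1-2i)^{-1}$ factors accordingly gives exactly the expression $S_2 = (|\cL|)_q(|\cN|)_s\sum_{k=1}^{r-1}(|\cM|)_{r-k}\sum_{\ell=0}^{k} d_{\max}^{2\ell}\prod_{i=0}^{q+2r+s-1-(2k-\ell)}(2m-1-2i)^{-1}$. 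Combining, $\big|\E{(L)_q(M)_r(N)_s} - \text{main term}\big|\le C(S_1+S_2)$.

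\textbf{Anticipated main obstacle.} The routine part is the clean-tuple count and the basic configuration-model pairing probability. The delicate bookkeeping — and the step I expect to absorb most of the effort — is making the combinatorics of \emph{simultaneous} coincidences precise: a multi-edge contributes a quadruple of half-edges at two vertices, an $H$-constrained pair involves an edge of $H$, and these can overlap each other and with loops in several patterns. One must enumerate the coincidence patterns, verify that each is dominated by one of the six degree-sums in $S_1$ (or by a term in $S_2$), and confirm that the probability weights never exceed $C\big(\prod_{i=0}^{q+2r+s-1}(2m-1-2i)\big)^{-1}$ after the deletion of half-edges — checking, in particular, that imposing $j' < j$ disjoint pairs on $2m - (\text{a few})$ half-edges still yields a weight bounded by the full $j$-pair normalization up to a bounded factor. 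This is the kind of estimate that is conceptually straightforward but requires care to state uniformly in $q,r,s$, which is exactly why the constant $C$ is allowed to depend on $q,r,s$ in the statement.
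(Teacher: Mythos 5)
Your proposal follows essentially the same route as the paper's proof: expand the factorial moment over non-repeating tuples in $\cL^q\times\cM^r\times\cN^s$, use the exact pairing probability $\prod_{i=0}^{q+2r+s-1}(2m-1-2i)^{-1}$ for tuples whose half-edges are all distinct, and split the conflicting tuples into those that merely share a half-edge (counted by the six degree-sums defining $S_1$) and those that share an entire half-edge pair, which can only occur among the $\cM$-coordinates (counted by $S_2$ via the $d_{\max}^{2\ell}$ bookkeeping). The one point you should make explicit when writing this up is that every $S_1$-type conflict forces a single half-edge to be matched with two distinct partners, so the event has probability exactly zero and the per-tuple error equals the unperturbed product $\prod_{i=0}^{q+2r+s-1}(2m-1-2i)^{-1}$; it is this fact, not a ``shrinking probability with fewer free half-edges'' estimate, that justifies $S_1$ carrying the full normalizing product, since a genuinely positive probability with fewer effective constraints would exceed the main term by factors of $2m$, as happens in the $S_2$ case.
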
 
\begin{proof}
Throughout the proof, write
\[
(x,y,z)
=
\big( (x_1,\dots,x_q),(y_1,\dots,y_r),(z_1,\dots,z_s)\big)
\in
\cL^q \times \cM^r \times \cN^s
\]
to denote a generic element of $\cL^q \times \cM^r \times \cN^s$. For $(x,y,z) \in \cL^q \times \cM^r \times \cN^s$, write
\[
\I{x} = \prod_{i=1}^q\I{x_i},
\ \ \ \ \ \ \
\I{y} = \prod_{i=1}^r\I{y_i},
\ \ \ \ \ \ \ 
\I{z} = \prod_{i=1}^s\I{z_i}.
\]
We say $(x,y,z)$ is \textit{non-repeating} if $x_1,x_2, \dots ,x_q$ are pairwise distinct, $y_1,y_2, \dots ,y_r$ are pairwise distinct, and $z_1,z_2, \dots ,z_s$ are pairwise distinct. In what follows, write
\[
\sumstar := \sum_{\stackrel{(x,y,z) \in \cL^q \times \cM^r \times \cN^s}{(x,y,z) \text{ is non-repeating}}},
\]
and, for $S \subset \cL^q \times \cM^r \times \cN^s$, write
\[
\sumstar_{S} \ := \sum_{\stackrel{(x,y,z) \in S}{(x,y,z) \text{ is non-repeating}}}.
\]
Note that $(L)_q(M)_r(N)_s = \sumstar \ \I{x} \I{y} \I{z}$, so 
\begin{equation} \label{expected value of LMN}
\E{(L)_q(M)_r(N)_s} = \sumstar \ \p{\I{x} \I{y} \I{z} = 1}.
\end{equation}

We say $(x,y,z)$ is \textit{non-conflicting} if the $2q+4r+2s$ half-edges appearing in $x,y$ and $z$ are pairwise distinct, and otherwise we say $(x,y,z)$ is conflicting. Since half-edges in $G$ are paired uniformly at random, for non-conflicting $(x,y,z)$ we have
\begin{equation} \label{non-conflicting}
\p{\I{x} \I{y} \I{z}=1}
=
\prod_{i=0}^{q+2r+s-1} \frac{1}{2m - 1 - 2i}.
\end{equation}

Now, for a given $(x,y,z)$, let $er(x,y,z)$ be defined as follows:
\[
er(x,y,z) 
:=
\p{\I{x}\I{y}\I{z} = 1} - \prod_{i=0}^{q+2r+s-1} \frac{1}{2m - 1 - 2i}.
\]
By (\ref{non-conflicting}), if $(x,y,z)$ is non-conflicting then $er(x,y,z)=0$, so
\begin{align*}
\E{(L)_q(M)_r(N)_s}
&=
\sumstar \
\prod_{i=0}^{q+2r+s-1} \frac{1}{2m - 1 - 2i}
+
\sumstar \
er(x,y,z)\\
&=
\frac{(|\cL|)_q(|\cM|)_r(|\cN|)_s}{\prod_{i=0}^{q+2r+s-1} 2m - 1 - 2i}
+
\sumstar_{(x,y,z) \text{ is conflicting}} er(x,y,z).
\end{align*}
By the triangle inequality this implies
\begin{equation}\label{eq:conflict_bound}
\left| \E{(L)_q(M)_r(N)_s} - \frac{(|\cL|)_q(|\cM|)_r(|\cN|)_s}{\prod_{i=0}^{q+2r+s-1} 2m - 1 - 2i} \right| \leq \sumstar_{(x,y,z) \text{ is conflicting}}
\left| er(x,y,z) \right|.
\end{equation}
To bound the error terms, we must make a distinction between two types of conflicts. This distinction is most easily understood by way of an example. On the one hand, suppose $x_1 = (ui,uj_1)$ and $x_2 = (ui,uj_2)$ for $u \in V(G)$ and distinct $i,j_1,j_2 \in [d(u)]$. Then $\I{x_1} \I{x_2} = 1$ is the event that the half edge $ui$ is joined to $uj_1$ and $uj_2$ simultaneously, and $\p{\I{x_1} \I{x_2} = 1}=0$. On the other hand, suppose $y_1 = (ui_1,vj_1),(ui_2,vj_2)$ and $y_2 = (ui_1,vj_1),(ui_3,vj_3)$ for distinct $u,v \in V(G)$, distinct $i_1,i_2,i_3 \in [d(u)]$, and distinct $j_1,j_2,j_3 \in [d(v)]$. Then $\I{y_1}\I{y_2} = 1$ is the event that $u$ and $v$ are connected by a triple edge, and $\p{\I{y_1} \I{y_2} = 1} > 0$. 

We say a conflicting triple $(x,y,z)$ is a \textit{bad conflict} if $\p{\I{x} \I{y} \I{z}=1}=0$, and otherwise we say $(x,y,z)$ is a \textit{good conflict}. In the above examples, the first is a bad conflict and the second is a good conflict. Let $\cB = \cB (q,r,s) \subseteq \cL^q \times \cM^r \times \cN^s$ and $\cG = \cG (q,r,s) \subseteq \cL^q \times \cM^r \times \cN^s$ be the collections of bad conflicts and good conflicts respectively. The rest of this proof is dedicated to bounding $|\cB|,|\cG|$, and $er(x,y,z)$ for $(x,y,z) \in \cB \cup \cG$. 

\medskip
\noindent
\textit{(Bounding $|\cB|$):} 
If $(x,y,z)$ is a bad conflict then one of the following must hold (in reading the below descriptions, it may be useful to consult Figure~\ref{fig:overlap}):
\begin{enumerate}
%xx
\item There exists $1 \leq a < b \leq q$, $u \in V(G)$ and distinct $i_1,i_2,i_3 \in [d(u)]$ such that $x_a = (ui_1,ui_2)$ and $x_b = (ui_1,ui_3)$. Write $\cB_{xx}$ for the set of $(x,y,z) \in \cB$ that contain a pair $(x_a,x_b)$ of this form. 
%xy
\item There exists $1 \leq a \leq q$, $1\leq b \leq r$, distinct $u,v \in V(G)$, distinct $i_1,i_2,i_3 \in [d(u)]$, and distinct $j_1,j_2 \in [d(v)]$ such that $x_a = (ui_1,ui_3)$ and $y_b = (ui_1,vj_1),(ui_2,vj_2)$. Write $\cB_{xy}$ for the set of $(x,y,z) \in \cB$ that contain a pair $(x_a,y_b)$ of this form.
%xz
\item There exists $1 \leq a \leq q$, $1\leq b \leq s$, $uv \in e(H)$, distinct $i_1,i_2 \in [d(u)]$, and $j \in [d(v)]$ such that $x_a = (ui_1,ui_2)$ and $z_b = (ui_1,vj)$. Write $\cB_{xz}$ for the set of $(x,y,z) \in \cB$ that contain a pair $(x_a,z_b)$ of this form.
%yy
\item There exists $1 \leq a < b \leq r$, distinct $u,v_1,v_2 \in V(G)$, distinct $i_1,i_2,i_3 \in [d(u)]$, distinct $j_1,j_2 \in [d(v_1)]$, and distinct $k_1,k_2 \in [d(v_2)]$ such that $y_a = (ui_1,v_1j_1),(ui_2,v_1j_2)$ and $y_b = (ui_1,v_2k_1),(ui_3,v_2k_2)$. Write $\cB_{yy}$ for the set of $(x,y,z) \in \cB$ that contain a pair $(y_a,y_b)$ of this form.
%yz
\item There exists $1 \leq a \leq r$, $1 \leq b \leq s$ and distinct $u,v_1,v_2 \in V(G)$ such that $uv_2 \in e(H)$, distinct $i_1,i_2 \in [d(u)]$, distinct $j_1,j_2 \in [d(v_1)]$, and $k \in [d(v_2)]$ such that $y_a = (ui_1,v_1j_1),(ui_2,v_1j_2)$ and $z_b = (ui_1,v_2k)$. Write $\cB_{yz}$ for the set of $(x,y,z) \in \cB$ that contain a pair $(y_a,z_b)$ of this form.
%zz
\item There exists $1 \leq a < b \leq s$, distinct $uv_1,uv_2 \in e(H)$, $i \in [d(u)]$, $j \in [d(v_1)]$, and $k \in [d(v_2)]$ such that $z_a = (ui,v_1j)$ and $z_b = (ui,v_2k)$. Write $\cB_{zz}$ for the set of $(x,y,z) \in \cB$ that contain a pair $(z_a,z_b)$ of this form.
\end{enumerate}
\begin{figure}[htb]
\includegraphics[width=0.7\textwidth]{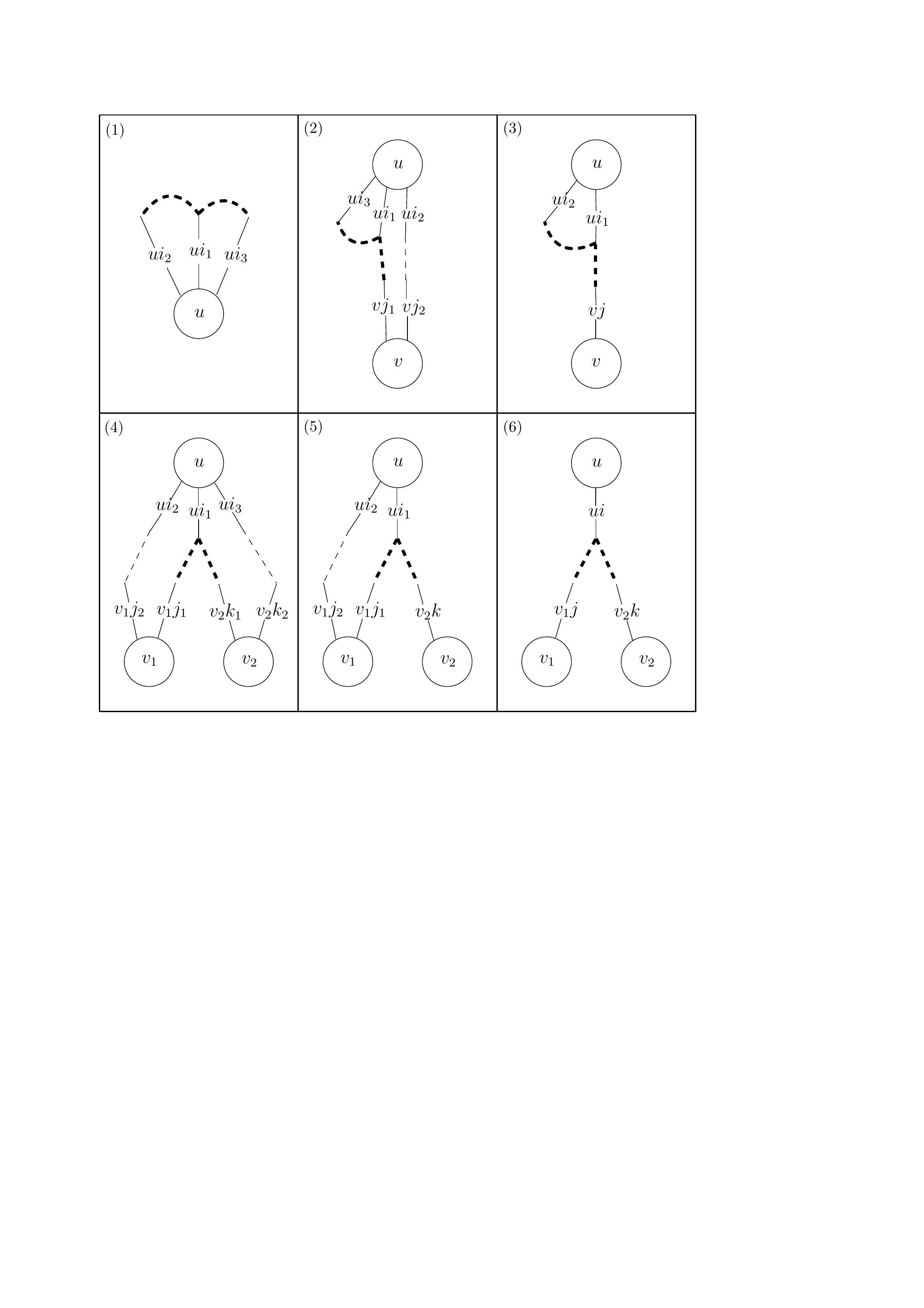}
\caption{An example of each of the six types of ``bad'' conflicts, depicted in the same order as they are described in the text. In the drawing, dashed lines represent the connections between half-edges required by the event. The bold dashed lines show the locations at which two half-edges must pair with a single half-edge, rendering the corresponding event impossible.}
\label{fig:overlap}
\end{figure}
We next turn to bounding the sizes of each set, starting with $\cB_{xx}$. The number of choices for $a$ and $b$ is $q(q-1)/2$. Having chosen these, for each possible choice of $u \in V(G)$, there are less than $d(u)^3$ choices for $i_1,i_2$ and $i_3$. Then, having chosen these, we must choose one of $i_1,i_2$ and $i_3$ to be repeated in $x_a$ and $x_b$. Lastly, we must choose the remaining $q-2$ entries for $x$, $r$ entries for $y$, and $s$ entries for $z$. Hence,
\begin{align*}
|\cB_{xx}|
&\leq
(|\cL|)_{q-2}(|\cM|)_r(|\cN|)_s q(q-1)/2 \sum_{1 \leq u \leq n} 3 d(u)^3\\
&=
C_1 (|\cL|)_{q-2}(|\cM|)_r(|\cN|)_s \sum_{1 \leq u \leq n} d(u)^3,
\end{align*}
where $C_1 = C_1(q) = 3q(q-1)/2$. Note that $\cB_{xx}$ is empty if $q \leq 1$, so $|\cB_{xx}| = 0$. In this case the right hand side is also zero by our convention that $(k)_{\ell} = 0$ for $\ell<0$. Therefore, the bound also holds for $q\leq 1$. The subsequent bounds can likewise be seen to hold when the right hand side is zero, though we do not explicitly verify this in every case.

When building an element of $\cB_{xy}$, the number of ways to choose $a$ and $b$ is $qr$. Having chosen these, for each pair $u,v \in V(G)$, there are less than $d(u)^3 d(v)^2$ choices for $i_1,i_2,i_3 \in [d(u)]$ and $j_1,j_2 \in [d(v)]$. Then, there are a constant number of ways to arrange the half-edges in $x_a$ and $y_b$. Lastly, we must choose the remaining entries for $x,y$ and $z$. Hence,
\begin{align*}
|\cB_{xy}|
&\leq 
C_2 (|\cL|)_{q-1}(|\cM|)_{r-1}(|\cN|)_s \sum_{1 \leq u \neq v \leq n} d(u)^3 d(v)^2,
\end{align*}
where $C_2$ depends only on $q$ and $r$.

For an element of $\cB_{xz}$, for each $uv \in e(H)$, there are less than $d(u)^2 d(v)$ ways to choose $i_1,i_2 \in [d(u)]$ and $j \in [d(v)]$. Hence,
\begin{align*}
|\cB_{xz}|
&\leq
C_3(|\cL|)_{q-1}(|\cM|)_{r}(|\cN|)_{s-1} \sum_{uv \in e(H)} d(u)^2 d(v).
\end{align*}
For $\cB_{yy}$, for each $u,v_1,v_2 \in V(G)$, there are less than $d(u)^3 d(v_1)^2 d(v_2)^2$ ways to choose $i_1,i_2,i_3 \in [d(u)]$, $j_1,j_2 \in [d(v_1)]$, and $k_1,k_2 \in [d(v_2)]$. Hence,
\begin{align*}
|\cB_{yy}|
&\leq
C_4 (|\cL|)_{q}(|\cM|)_{r-2}(|\cN|)_{s} \sum_{\substack{1 \leq u, v_1, v_2 \leq n \\ u,v_1,v_2 \text{ distinct}}} d(u)^3 d(v_1)^2 d(v_2)^2.
\end{align*}
For $\cB_{yz}$, for each $u,v_1,v_2 \in V(G)$ such that $uv_2 \in e(H)$, there are less than $d(u)^2 d(v_1)^2 d(v_2)$ ways to choose $i_1,i_2 \in [d(u)]$, $j_1,j_2 \in [d(v_1)]$, and $k \in [d(v_2)]$. Hence,
\begin{align*}
|\cB_{yz}|
&\leq
C_5 (|\cL|)_{q}(|\cM|)_{r-1}(|\cN|)_{s-1} \sum_{\substack{1 \leq u, v_1, v_2 \leq n \\ u,v_1,v_2 \text{ distinct} \\ uv_2 \in e(H)}} d(u)^2 d(v_1)^2 d(v_2).
\end{align*}
Lastly, for $\cB_{zz}$, for each $uv_1,uv_2 \in e(H)$, there are less than $d(u) d(v_1) d(v_2)$ ways to choose $i \in [d(u)]$, $j \in [d(v_1)]$ and $k \in [d(v_2)]$. Hence,
\begin{align*}
|\cB_{zz}|
&\leq
C_6 (|\cL|)_{q}(|\cM|)_{r}(|\cN|)_{s-2} \sum_{\substack{uv_1,uv_2 \in e(H) \\ v_1 \neq v_2}} d(u) d(v_1) d(v_2).
\end{align*}
Note that the values of $C_1,C_2,C_3,C_4,C_5$ and $C_6$ depend only on $q,r$ and $s$.

\medskip
\noindent
\textit{(Bounding $|er(x,y,z)|$ for $(x,y,z) \in \cB$)}: If $(x,y,z) \in \cB$ then $\p{\I{x} \I{y} \I{z}=1}=0$, meaning 
\begin{equation}\label{eq:baderrbd}
|er(x,y,z)|
=
\prod_{i=0}^{q+2r+s-1} \frac{1}{2m - 1 - 2i}
\end{equation}

\medskip
\noindent
\textit{(Bounding $|\cG|$)}: Suppose $(x,y,z) \in \cG$. Then $(x,y,z)$ is conflicting, meaning a half-edge appears more than once in $x \cup y \cup z$. However, since $\p{\I{x}\I{y}\I{z} = 1} > 0$ for $(x,y,z) \in \cG$, it cannot be the case where $\I{x}\I{y}\I{z}$ contains the event that a half-edge is paired to two different half-edges simultaneously. Hence, there must be a half-edge pair that appears more than once in $x \cup y \cup z$. Furthermore, this half-edge pair must appear more than once in $y$, since the edges in $y$ and $z$ are disjoint. It follows that any $(x,y,z) \in \cG$ can be constructed in the following way:
\begin{enumerate}
\item Choose $x$ and $z$ arbitrarily. The number of choices here is $(|\cL|)_{q}(|\cN|)_{s}$.
\item Choose a set of indices $1 \leq a_1 < a_2 <  \dots < a_k \leq r$ and arbitrarily choose the elements $y_a \in y$ such that $a \neq a_i$ for all $1 \leq i \leq k$. The number of choices here is $(|\cM|)_{r-k}$ times a constant in terms of $r$.
\item Choose $1 \leq b_1 \leq \dots \leq b_{\ell} \leq r$ such that $\{b_1, \dots ,b_{\ell}\} \subseteq \{a_1, \dots ,a_k\}$. For each $1 \leq i \leq \ell$, build $y_{b_i}$ by choosing a half-edge pair already in $y$, then choosing the other half-edge pair arbitrarily. The number of choices for each $i$ is less than $d_{\max}^2$ times a constant in terms of $r$. 
\item For each $a \in \{a_1, \dots ,a_k\} \setminus \{b_1 \dots ,b_{\ell}\}$, build $y_{a}$ by choosing two half-edge pairs already in $y$. The number of choices here is a constant in terms of $r$.
\end{enumerate}
Since every element of $\cG$ can be constructed in this way, we get
\begin{align*}
|\cG|
&\leq
C(r) (|\cL|)_{q}(|\cN|)_{s} \sum_{k=1}^{r-1} (|\cM|)_{r-k} \sum_{\ell = 0}^{k} d_{\max}^{2\ell}.
\end{align*}

\medskip
\noindent
\textit{(Bounding $er(x,y,z)$ for $(x,y,z) \in \cG$)}:
Let $(x,y,z) \in \cG$ and suppose we can construct $(x,y,z)$ as above with a particular $k$ and $\ell$. Then there are $2k - \ell$ half-edge pairs that are redundant when calculating $\p{\I{x} \I{y} \I{z}=1}$. Hence,
\[
\p{\I{x} \I{y} \I{z}=1}
=
\prod_{i=0}^{q+2r+s-1-(2k-\ell)}\frac{1}{2m-1-2i}.
\]
Therefore, for such $(x,y,z) \in \cG$,
\begin{align}
|er(x,y,z)|
&=
\left| \prod_{i=0}^{q+2r+s-1}\frac{1}{2m-1-2i} - \prod_{i=0}^{q+2r+s-1-(2k-\ell)}\frac{1}{2m-1-2i} \right|\nonumber\\
&\leq
\prod_{i=0}^{q+2r+s-1-(2k-\ell)}\frac{1}{2m-1-2i} \, .
\label{eq:gooderrbd}
\end{align}
Finally, by (\ref{eq:conflict_bound}), we know that 
\[
\left| \E{(L)_q(M)_r(N)_s} - \frac{(|\cL|)_q(|\cM|)_r(|\cN|)_s}{\prod_{i=0}^{q+2r+s-1} 2m - 1 - 2i} \right| \leq \sumstar_{(x,y,z) \in \cB}
\left| er(x,y,z) \right|
+ \sumstar_{(x,y,z) \in \cG}
\left| er(x,y,z) \right|,
\]
from which the result now follows by using the bounds on $|\cB_{xx}|,|\cB_{xy}|,|\cB_{xz}|,|\cB_{yy}|,|\cB_{yz}|,|\cB_{zz}|$ and on $|\cG|$, together with (\ref{eq:baderrbd}) and (\ref{eq:gooderrbd})
\end{proof}

\subsection{The probability of simplicity for a random superposition of graphs}

Before proving Theorem~\ref{Poisson Asymptotics}, it will be useful to show some auxiliary bounds. 
\begin{lem} \label{bounds1}
Under the assumptions of Theorem \ref{Poisson Asymptotics}, we have
\begin{equation} \label{sum of degrees is O n}
\sum_{v \in [n]} d^n(v) = O(n),
\end{equation}
\begin{equation} \label{sum of squares is O n}
\sum_{v \in [n]} \left(d^n(v)\right)^2 = O(n),
\end{equation}
\begin{equation} \label{joint distribution is O n}
\sum_{uv \in e(h_n)} d^n(u)d^n(v) = O(n),
\end{equation}
and
\begin{equation} \label{neighbourhoods are o n}
\sup_{u \in [n]} \sum_{v : uv \in e(h_n)} d^n(v) = o(n),
\end{equation}
\end{lem}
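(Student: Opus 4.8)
The plan is to treat the four bounds roughly in order; the first three are near-immediate consequences of the hypotheses of Theorem~\ref{Poisson Asymptotics}, and essentially all of the content lies in \eqref{neighbourhoods are o n}. For \eqref{sum of squares is O n}, I would observe that $\sum_{v\in[n]}(d^n(v))^2 = n\mu_2(p^n)$; since $\mu_2(p^n)\to\mu_2(p)\in[0,\infty)$, the sequence $(\mu_2(p^n))_{n\ge 1}$ is bounded by some constant $C$, whence the bound. For \eqref{sum of degrees is O n}, I would use $k\le k^2$ for every non-negative integer $k$ to conclude $\mu_1(p^n)\le\mu_2(p^n)\le C$, so that $\sum_{v\in[n]}d^n(v) = n\mu_1(p^n)\le Cn$. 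For \eqref{joint distribution is O n}, grouping the edges of $h_n$ by the degrees of their endpoints gives $\sum_{uv\in e(h_n)}d^n(u)d^n(v) \le n\sum_{k,\ell\ge 0}k\ell\,\alpha^n(k,\ell)$, and the hypothesis $\sum_{k,\ell}k\ell\,\alpha^n(k,\ell)\to\sum_{k,\ell}k\ell\,\alpha(k,\ell)<\infty$ shows this sum is bounded, so the left-hand side is $O(n)$.

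For \eqref{neighbourhoods are o n} I would use a truncation argument. Fix $u\in[n]$ and a threshold $M\in\N$, and split $\sum_{v:\,uv\in e(h_n)}d^n(v)$ according to whether $d^n(v)\le M$ or $d^n(v)>M$. The low-degree part is at most $M\deg_{h_n}(u)\le M\max_{w\in[n]}\deg_{h_n}(w)$, which, for each fixed $M$, is $o(n)$ as $n\to\infty$ by the hypothesis $\max_w\deg_{h_n}(w)=o(n)$. The high-degree part is at most $\sum_{v:\,d^n(v)>M}d^n(v)\le \tfrac1M\sum_{v\in[n]}(d^n(v))^2 = \tfrac{n}{M}\mu_2(p^n)\le\tfrac{Cn}{M}$, a bound uniform in $u$ and using \eqref{sum of squares is O n}. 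Combining the two parts gives $\limsup_{n\to\infty}\tfrac1n\sup_{u\in[n]}\sum_{v:\,uv\in e(h_n)}d^n(v)\le C/M$ for every $M\in\N$, and letting $M\to\infty$ yields \eqref{neighbourhoods are o n}.

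The only real obstacle is \eqref{neighbourhoods are o n}: a priori a single vertex of $h_n$ could have a bounded number of neighbours of near-linear degree, or a linear number of neighbours of moderate degree, and the truncation above is designed to rule out both, using precisely that the maximum degree in $h_n$ is sublinear and that the degree sequences $\dseq^n$ have uniformly bounded empirical second moment. No genuinely new idea beyond these two inputs is needed.
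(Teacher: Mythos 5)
Your proposal is correct and follows essentially the same route as the paper: the first three bounds are read off from $n\mu_1(p^n)$, $n\mu_2(p^n)$ and $n\sum_{k,\ell}k\ell\,\alpha^n(k,\ell)$ exactly as in the paper's proof. For \eqref{neighbourhoods are o n} the paper phrases the argument as a proof by contradiction (assuming some $u_n$ has neighbourhood degree-sum at least $cn$ and deriving $\sum_{v}(d^n(v))^2=\omega(n)$), but the underlying mechanism---truncating at a degree threshold $M$, using $\max_{w}\deg_{h_n}(w)=o(n)$ for the low-degree neighbours and the uniformly bounded second moment for the high-degree ones---is identical to your direct truncation argument.
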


\begin{proof}
Equations (\ref{sum of degrees is O n}) and (\ref{sum of squares is O n}) follow from the fact that $\mu_1(p^n) \rightarrow \mu_1(p) < \infty$ and $\mu_2(p^n) \rightarrow \mu_2(p) < \infty$. Indeed, we have
\begin{align*}
\sum_{v \in [n]} d^n(v) 
= 
n \sum_{k \geq 1} k p^n(k)
=
n\mu_1(p^n) = O(n),
\end{align*}
and
\begin{align*}
\sum_{v \in [n]} \left(d^n(v)\right)^2 
= 
n \sum_{k \geq 1} k^2 p^n(k)
=
n\mu_2(p^n) = O(n).
\end{align*}

Equation (\ref{joint distribution is O n}) follows from the convergence of $\sum_{i,j \geq 1} ij\alpha^n(i,j)$. Notice that, by the definition of $\alpha^n$,
\[
\sum_{uv \in e(h_n)} d^n(u) d^n(v)
=
\sum_{i,j \geq 1} \left( \sum_{uv \in e(h_n) : d^n(u)=i, d^n(v)=j} ij \right)
=
\sum_{i,j \geq 1} ij (n\alpha^n(i,j)).
\]
Hence, 
\[
\frac{1}{n} \sum_{uv \in e(h_n)} d^n(u) d^n(v) 
\rightarrow 
\sum_{i,j \geq 1} ij\alpha(i,j) < \infty,
\]
implying that 
\[
\sum_{uv \in e(h_n)} d^n(u) d^n(v) = O(n).
\]

We will prove the fourth bound by contradiction. To this end, suppose (\ref{neighbourhoods are o n}) fails. Then we can find $c > 0$ and a sequence of vertices $(u_n, n \geq 1)$ with $u_n \in v(h_n)$ such that for all $n$ sufficiently large,
\begin{equation} \label{eqn for contradiction proof}
\sum_{v : u_n v \in e(h_n)} d^n(v) \geq cn.
\end{equation}
Write $\deg(u_n) = \deg_{h_n}(u_n)$. List the neighbours of $u_n$ in $h_n$ as $N_{h_n}(u_n) = \{v^n_1,\dots ,v^n_{\deg(u_n)}\}$ so that $d^n(v^n_i) \geq d^n(v^n_{i+1})$ for all $1 \leq i < \deg(u_n)$.

Next, fix $D \in \mathbb{N}$ and let $k = k(n) = \max\{i : d^n(v^n_i) \geq D\}$. Then
\[
\sum_{i = k+1}^{\deg(u_n)} d^n(v^n_i) \leq (\deg(u_n)-k)(D-1) = o(n),
\]
the last bound holding since $\deg(u_n) = o(n)$ by assumption. Thus,
\begin{align*}
\sum_{v : u_n v \in e(h_n)} d^n(v)^2
&=
\sum_{i=1}^{\deg(u_n)} d^n(v^n_i)^2\\
&\geq 
\sum_{i=1}^k d^n(v^n_i)^2\\
&\geq
D \sum_{i=1}^k d^n(v^n_i)\\
&=
D \left(\sum_{v : u_n v \in e(h_n)} d^n(v) - o(n) \right)\\
&\geq
D(c-o(1))n,
\end{align*} 
the last bound holding by (\ref{eqn for contradiction proof}). Since $D \in \mathbb{N}$ was arbitrary, it follows that 
\[
\sum_{v \in [n]} d^n(v)^2
\geq 
\sum_{v : u_nv \in e(h_n)} d^n(v)^2
=
\omega(n),
\]
contradicting (\ref{sum of squares is O n}).
\end{proof}

The next lemma is the last ingredient needed, and also assumes $p(0)+p(1)<1$, i.e. an asymptotically non-zero proportion of the degrees in $G_n$ are 2 or greater. We will show later that Theorem \ref{Poisson Asymptotics} is straightforward when $p(0)+p(1)=1$. 

\begin{lem} \label{bounds2}
Under the assumptions of Theorem \ref{Poisson Asymptotics}, suppose additionally that $p(0)+p(1)<1$. Then
\begin{equation} \label{cL is theta n}
|\cL(G_n)| = \Theta(n),
\end{equation}
and
\begin{equation} \label{cM is theta n^2}
|\cM(G_n,h_n)| = \Theta(n^2).
\end{equation}
\end{lem}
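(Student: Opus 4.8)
The plan is to observe first that $|\cL(G_n)|$ and $|\cM(G_n,h_n)|$ are in fact \emph{deterministic} functions of the degree sequence $\dseq^n$ and of $h_n$ (the dependence on $G_n$ is only through its degree sequence), given directly from the definitions in Section~\ref{sec:poisson_approx} by
\[
|\cL(G_n)| = \sum_{u \in [n]} \binom{d^n(u)}{2},
\qquad
|\cM(G_n,h_n)| = \sum_{\substack{u<v\\ uv \notin e(h_n)}} \binom{d^n(u)}{2}\, d^n(v)\big(d^n(v)-1\big)\, .
\]
Since $p(0)+p(1)<1$, I would then fix an integer $k_0 \ge 2$ with $p(k_0)>0$; as $p^n(k_0) \to p(k_0)$, for all $n$ large enough the set $W_n := \{u \in [n] : d^n(u) = k_0\}$ satisfies $|W_n| = n\,p^n(k_0) \ge \tfrac12 n\, p(k_0)$, so $|W_n| = \Theta(n)$.

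For \eqref{cL is theta n}: the upper bound follows from $\binom{d^n(u)}{2} \le \tfrac12 (d^n(u))^2$ summed over $u \in [n]$, which is $O(n)$ by \eqref{sum of squares is O n}; the lower bound follows since each $u \in W_n$ contributes $\binom{k_0}{2} \ge 1$ to the sum, whence $|\cL(G_n)| \ge |W_n| = \Theta(n)$.

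For \eqref{cM is theta n^2}: for the upper bound I would bound each summand by $\tfrac12 (d^n(u))^2 (d^n(v))^2$, drop the constraints $u<v$ and $uv \notin e(h_n)$, and factor, getting $|\cM(G_n,h_n)| \le \tfrac12 \big(\sum_{u\in[n]}(d^n(u))^2\big)^2 = O(n^2)$ by \eqref{sum of squares is O n}. For the lower bound I would restrict the sum to ordered pairs $u<v$ with $u,v \in W_n$ and $uv \notin e(h_n)$: each such summand equals $\binom{k_0}{2} k_0(k_0-1) \ge 2$, and the number of such pairs is at least $\binom{|W_n|}{2} - |e(h_n)|$. Here one uses $\max_{v \in [n]} \deg_{h_n}(v) = o(n)$ to get $|e(h_n)| = \tfrac12 \sum_{v \in [n]} \deg_{h_n}(v) \le \tfrac{n}{2}\max_{v\in[n]}\deg_{h_n}(v) = o(n^2)$, while $\binom{|W_n|}{2} = \Theta(n^2)$; hence the number of admissible pairs is $\Omega(n^2)$ and therefore $|\cM(G_n,h_n)| = \Omega(n^2)$, which together with the upper bound gives $\Theta(n^2)$.

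The only genuine subtlety is the lower bound on $|\cM(G_n,h_n)|$: one must ensure that forbidding the edges of $h_n$ discards only a negligible fraction of the $\Theta(n^2)$ pairs inside $W_n$, and this is precisely where the hypothesis $\max_{v} \deg_{h_n}(v) = o(n)$ is used, via $|e(h_n)| = o(n^2)$. Everything else is a direct application of Lemma~\ref{bounds1} and the pointwise convergence $p^n \to p$.
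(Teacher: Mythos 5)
Your proof is correct and follows essentially the same route as the paper: explicit deterministic formulas for $|\cL_n|$ and $|\cM_n|$, upper bounds via $\sum_v (d^n(v))^2 = O(n)$, and lower bounds by counting pairs of high-degree vertices and discarding the $|e(h_n)| = o(n^2)$ forbidden pairs. The only (immaterial) difference is that the paper takes the $\Theta(n)$-sized witness set to be all vertices of degree at least $2$, of size $n(1-p^n(0)-p^n(1))$, rather than your set of vertices of degree exactly $k_0$.
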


\begin{proof}
Let $\cL_n = \cL(G_n), \cM_n = \cM(G_n,h_n),$ and $\cN_n = \cN(G_n,h_n)$. By their definitions, we have
\begin{align*}
|\cL_n|
&=
\sum_{v \in [n]} \frac{d^n(v) \left( d^n(v) - 1 \right)}{2}, \text{ and}\\
|\cM_n|
&=
\sum_{\{u < v : uv \notin e(h_n)\}} \frac{d^n(u) \left( d^n(u) - 1 \right)}{2} d^n(v) \left( d^n(v) - 1 \right).
\end{align*}

For the upper bounds, by Lemma \ref{bounds1} we have
\begin{align*}
|\cL_n|
&=
\sum_{v \in [n]} \frac{d^n(v) \left( d^n(v) - 1 \right)}{2}
\leq
\sum_{v \in [n]} d^n(v)^2
=
O(n), \text{ and}\\
|\cM_n|
&=
\sum_{\{u < v : uv \notin e(h_n)\}} \frac{d^n(u) \left( d^n(u) - 1 \right)}{2} d^n(v) \left( d^n(v) - 1 \right)
\leq
\left(\sum_{v \in [n]} d^n(v)^2\right)^2
= 
O(n^2).
\end{align*}
 
For the lower bounds, first notice that
\begin{align*}
|\cL_n|
&=
\sum_{v \in [n]} \frac{d^n(v) \left( d^n(v) - 1 \right)}{2}\\
&=
\sum_{v \in [n] : d^n(v) > 1} \frac{d^n(v) \left( d^n(v) - 1 \right)}{2}\\
&\geq 
|\{v \in [n] : d^n(v) > 1\}|\\
&=
n(1-p^n(0)-p^n(1))
\end{align*}
Since $p^n(0)+p^n(1) \rightarrow p(0)+p(1) < 1$ by assumption, this implies $|\{v \in [n] : d^n(v) > 1\}| = \Theta(n)$, and so
\[
|\cL_n| \geq |\{v \in [n] : d^n(v) > 1\}| = \Theta(n).
\] 
Similarly, we have 
\[
|\cM_n| \geq \big| \{ (u,v) : u<v, uv \notin e(h_n) \text{ and } d^n(u), d^n(v) > 1 \} \big|.
\]
From the conditions of Theorem~\ref{Poisson Asymptotics} we know that $\max_{v \in [n]} \{\deg_{h_n}(v)\} = o(n)$, which implies that $|e(h_n)| = o(n^2)$. Hence, we have
\begin{align*}
&\big| \{ (u,v) : u<v, uv \notin e(h_n) \text{ and } d^n(u), d^n(v) > 1 \} \big|\\
=&
\big| \{ (u,v) : u<v, d^n(u), d^n(v) > 1 \} \big| - o(n^2),
\end{align*}
and writing $k = k(n) = |\{v \in [n] : d^n(v) > 1\}|$, we have 
\begin{align*}
\big| \{ (u,v) : u<v, d^n(u), d^n(v) > 1 \} \big| = {k \choose 2} = \Theta(k^2) = \Theta(n^2),
\end{align*}
and hence, $|\cM_n| = \Theta(n^2)$. 
\end{proof}
\begin{proof}[Proof of Theorem~\ref{Poisson Asymptotics}]
Let $m = m(n) = \frac{1}{2} \sum_{v \in [n]} d^n(v)$, and let $q,r$ and $s$ be positive integers. Also, in what follows write $d^n_{\mathrm{max}} = \max_{1 \le i \le n} d^n(i)$; by Fact~\ref{fact:maxdeg} we know that $d^n_{\mathrm{max}}=o(n^{1/2})$. 

Assume for the time being that $p(0)+p(1)<1$ and that $\eta > 0$. Notice that when $\eta > 0$ and $\mu_1(p) > 0$ we have
\[
\frac{|\cN_n|}{n} = \frac{1}{n} \sum_{uv \in e(h_n)} d^n(u)d^n(v) 
=
\sum_{i,j \geq 1} ij\alpha^n(i,j)
\rightarrow 
\sum_{i,j \geq 1} ij\alpha(i,j)
=
\mu_1(p) \eta \in (0,\infty)> 0; 
\]
we have $\mu_1(p) > 0$ since $p(0) < 1$, and $\mu_1(p) < \infty$ since $\mu_2(p) < \infty$. It follows that $|\cN_n| = \Theta(n)$.

We first claim that
\[
\E{(L_n)_q (M_n)_r (N_n)_s} 
= 
\frac{(|\cL_n|)_q(|\cM_n|)_r(|\cN_n|)_s}{\prod_{i=0}^{q+2r+s-1} 2m - 1 - 2i} (1+o(1)).
\]
From Proposition \ref{Moment Bound} we know that
\[
\left| \E{(L_n)_q (M_n)_r (N_n)_s} 
-
\frac{(|\cL_n|)_q(|\cM_n|)_r(|\cN_n|)_s}{\prod_{i=0}^{q+2r+s-1} 2m - 1 - 2i} \right| \leq C(S_1+S_2),
\]
where $S_1$ is defined by the relationship
\begin{align*}
& S_1 \cdot \prod_{i=0}^{q+2r+s-1} (2m - 1 - 2i)\\
&=
(|\cL_n|)_{q-2}(|\cM_n|)_r(|\cN_n|)_s \sum_{v \in [n]} (d^n(v))^3\\
& \ +
(|\cL_n|)_{q-1}(|\cM_n|)_{r-1}(|\cN_n|)_s \sum_{1 \leq u \neq v \leq n} (d^n(u))^3 (d^n(v))^2\\
& \ +
(|\cL_n|)_{q-1}(|\cM_n|)_{r}(|\cN_n|)_{s-1} \sum_{uv \in e(h_n)} (d^n(u))^2 d^n(v)\\
& \ +
(|\cL_n|)_{q}(|\cM_n|)_{r-2}(|\cN_n|)_{s} \sum_{\substack{1 \leq u,v_1,v_2 \leq n \\ u,v_1,v_2 \text{ distinct}}} (d^n(u))^3 (d^n(v_1))^2 (d^n(v_2))^2\\
& \ +
(|\cL_n|)_{q}(|\cM_n|)_{r-1}(|\cN_n|)_{s-1} \sum_{\substack{1 \leq u, v_1, v_2 \leq n \\ u,v_1,v_2 \text{ distinct} \\ uv_2 \in e(h_n)}} (d^n(u))^2 (d^n(v_1))^2 d^n(v_2)\\
& \ +
(|\cL_n|)_{q}(|\cM_n|)_{r}(|\cN_n|)_{s-2} \sum_{\substack{uv_1,uv_2 \in e(h_n) \\ v_1 \neq v_2}} d^n(u) d^n(v_1) d^n(v_2),
\end{align*}
and $S_2$ is defined by
\begin{align*}
S_2
&=
(|\cL_n|)_{q}(|\cN_n|)_{s} \sum_{k=1}^{r-1} (|\cM_n|)_{r-k} \sum_{\ell = 0}^{k} \left(d^n_{\max}\right)^{2\ell} \prod_{i=0}^{q+2r+s-1-(2k-\ell)}\frac{1}{2m-1-2i};
\end{align*}
recall that we set $(k)_{\ell} = 0$ if $\ell < 0$. We now show that
\begin{equation} \label{S1 and S2 bounds}
S_1 = o\left( \frac{(|\cL_n|)_{q}(|\cM_n|)_{r}(|\cN_n|)_{s}}{\prod_{i=0}^{q+2r+s-1} (2m-1-2i)} \right) \text{ and }
S_2 = o\left( \frac{(|\cL_n|)_{q}(|\cM_n|)_{r}(|\cN_n|)_{s}}{\prod_{i=0}^{q+2r+s-1} (2m-1-2i)} \right).
\end{equation}
We will start by bounding $S_1$. Since $|\cL_n|,|\cM_n|,|\cN_n| \rightarrow \infty$ as $n \rightarrow \infty$, to prove the first bound in (\ref{S1 and S2 bounds}) it suffices to establish the following bounds:
\begin{align} 
\nonumber
\sum_{v \in [n]} (d^n(v))^3
&= 
o(|\cL_n|^2),\\ 
\nonumber
\sum_{1 \leq u \neq v \leq n} (d^n(u))^3 (d^n(v))^2
&=
o(|\cL_n||\cM_n|),\\ 
\label{6 bounds on sum of degrees}
\sum_{uv \in e(h_n)} (d^n(u))^2 d^n(v)
&=
o(|\cL_n||\cN_n|),\\ 
\nonumber
\sum_{\substack{1 \leq u, v_1,v_2 \leq n \\ u,v_1,v_2 \text{ distinct}}} (d^n(u))^3 (d^n(v_1))^2 (d^n(v_2))^2
&=
o(|\cM_n|^2),\\ 
\nonumber
\sum_{\substack{1 \leq u, v_1, v_2 \leq n \\ u,v_1,v_2 \text{ distinct} \\ uv_2 \in e(h_n)}} (d^n(u))^2 (d^n(v_1))^2 d^n(v_2)
&=
o(|\cM_n||\cN_n|), \text{ and}\\ 
\nonumber
\sum_{\substack{uv_1,uv_2 \in e(h_n) \\ v_1 \neq v_2}} d^n(u) d^n(v_1) d^n(v_2)
&=
o(|\cN_n|^2). 
\end{align}

Using Lemmas \ref{bounds1} and \ref{bounds2}, together with the fact that $d^n_{\max} = o(n^{1/2})$, we get the following results:
\begin{align*}
\sum_{v \in [n]} (d^n(v))^3
\leq
d^n_{\max} \sum_{v \in [n]} (d^n(v))^2
=
o(n^{1/2}) \cdot O(n)
=
o(n^2)
=
o(|\cL_n|^2),
\end{align*}

\begin{align*}
\sum_{1 \leq u \neq v \leq n} (d^n(u))^3 (d^n(v))^2
&\leq
d^n_{\max} \sum_{1 \leq u \neq v \leq n} (d^n(u))^2 (d^n(v))^2\\
&\leq
d^n_{\max} \left(\sum_{v \in [n]} (d^n(v))^2\right)^2
=
o(n^3)
=
o(|\cL_n||\cM_n|),
\end{align*}

\begin{align*}
\sum_{uv \in e(h_n)} (d^n(u))^2 d^n(v)
\leq
d^n_{\max} \sum_{uv \in e(h_n)} d^n(u) d^n(v)
=
d^n_{\max} |\cN_n|
=
o(|\cL_n||\cN_n|),
\end{align*}

\begin{align*}
\sum_{\substack{1 \leq u, v_1,v_2 \leq n \\ u,v_1,v_2 \text{ distinct}}} (d^n(u))^3 (d^n(v_1))^2 (d^n(v_2))^2
&\leq
d^n_{\max} \sum_{\substack{1 \leq u, v_1,v_2 \leq n \\ u,v_1,v_2 \text{ distinct}}} (d^n(u))^2 (d^n(v_1))^2 (d^n(v_2))^2\\
&\leq
d^n_{\max} \left(\sum_{v \in [n]}^n (d^n(v))^2\right)^3
=
o(n^4)
=
o(|\cM_n|^2),
\end{align*}

\begin{align*}
\sum_{\substack{1 \leq u, v_1, v_2 \leq n \\ u,v_1,v_2 \text{ distinct} \\ uv_2 \in e(h_n)}} (d^n(u))^2 (d^n(v_1))^2 d^n(v_2)
& \leq 
\left(d^n_{\max} \right)^3 \sum_{uv \in e(h_n)} d^n(u)d^n(v)\\
& =
o(n^{3/2}) |\cN_n|
=
o(|\cM_n||\cN_n|),
\end{align*}
and
\begin{align*}
\sum_{\substack{uv_1,uv_2 \in e(h_n) \\ v_1 \neq v_2}} d^n(u) d^n(v_1) d^n(v_2)
&=
\sum_{uv \in e(h_n)} d^n(u)d^n(v) \cdot \sum_{w : uw \in e(h_n)} d^n(w)\\
&\leq
\sum_{uv \in e(h_n)} d^n(u)d^n(v) \cdot \left( \sup_{u \in [n]} \sum_{w : uw \in e(h_n)} d^n(w) \right)\\
&=
|\cN_n| \cdot o(n)\\
&=
o(|\cN_n|^2),
\end{align*}
the last bound holding as $\cN_n = \Theta(n)$.

To prove the bound on $S_2$ from (\ref{S1 and S2 bounds}), first notice that 
\begin{align*}
S_2 
&=
(|\cL_n|)_{q}(|\cN_n|)_{s} \sum_{k=1}^{r-1} (|\cM_n|)_{r-k} \sum_{\ell = 0}^{k} (d^n_{\max})^{2\ell} \prod_{i=0}^{q+2r+s-1-(2k-\ell)}\frac{1}{2m-1-2i}\\
&=
\frac{(|\cL_n|)_{q}(|\cN_n|)_{s}}{\prod_{i=0}^{q+s-1} 2m-1-2i} \sum_{k=1}^{r-1} (|\cM_n|)_{r-k} \sum_{\ell = 0}^{k} (d^n_{\max})^{2\ell} \prod_{i=q+s}^{q+2r+s-1-(2k-\ell)}\frac{1}{2m-1-2i}.
\end{align*}
Hence, it suffices to show the following:
\begin{align*}
\sum_{k=1}^{r-1} (|\cM_n|)_{r-k} \sum_{\ell = 0}^{k} (d^n_{\max})^{2\ell} \prod_{i=q+s}^{q+2r+s-1-(2k-\ell)}\frac{1}{2m-1-2i}
&=
o \left( \frac{(|\cM_n|)_{r}}{\prod_{i=q+s}^{q+2r+s-1} 2m-1-2i} \right).
\end{align*}
Since $r$ is fixed, we need only show that for arbitrary $k \in [1,r-1]$ and $\ell \in [0,k]$,
\begin{align*}
(|\cM_n|)_{r-k} (d^n_{\max})^{2\ell} \prod_{i=q+s}^{q+2r+s-1-(2k-\ell)}\frac{1}{2m-1-2i}
&=
o \left( \frac{(|\cM|)_{r}}{\prod_{i=q+s}^{q+2r+s-1} 2m-1-2i} \right).
\end{align*}
By cancelling out some terms, this follows if we can show that
\begin{align*}
(d^n_{\max})^{2\ell}
&=
o \left( \frac{(|\cM_n|-(r-k))_{k}}{\prod_{i=q+2r+s-(2k-\ell)}^{q+2r+s-1} 2m-1-2i} \right).
\end{align*}
Now since $m = \Theta(n)$, $|\cM_n| = \Theta(n^2)$, and $r$ is a constant, this in turn holds, provided that 
\begin{align*}
(d^n_{\max})^{2\ell}
&=
o \left( \frac{n^{2k}}{n^{2k-\ell}} \right)\\
&=
o \left( n^{\ell} \right),
\end{align*}
which holds since $d^n_{\max} = o(n^{1/2})$.

Therefore, 
\[
S_1 + S_2 = o\left( \frac{(|\cL_n|)_q(|\cM_n|)_r(|\cN_n|)_s}{\prod_{i=0}^{q+2r+s-1} 2m - 1 - 2i} \right),
\]
which proves that
\[
\E{(L_n)_q (M_n)_r (N_n)_s} 
= 
\frac{(|\cL_n|)_q(|\cM_n|)_r(|\cN_n|)_s}{\prod_{i=0}^{q+2r+s-1} 2m - 1 - 2i} (1+o(1)).
\]
Furthermore, since $q,r$ and $s$ are fixed, we obtain that 
\begin{align*}
&\frac{(|\cL_n|)_q(|\cM_n|)_r(|\cN_n|)_s}{\prod_{i=0}^{q+2r+s-1} 2m - 1 - 2i}\\
= \ &
\frac{|\cL_n|^q |\cM_n|^r |\cN_n|^s}{(2m)^{q+2r+s}}(1+o(1))\\
= \ &
\left(\frac{|\cL_n|}{\sum_{v \in [n]} d^n(v)}\right)^q \left(\frac{|\cM_n|}{\left(\sum_{v \in [n]} d^n(v)\right)^2}\right)^r\left(\frac{|\cN_n|}{\sum_{v \in [n]} d^n(v)}\right)^s(1+o(1))
\end{align*}

Next, we claim that
\begin{align*}
\frac{|\cL_n|}{\sum_{v \in [n]} d^n(v)} 
&\rightarrow 
\nu/2,
\frac{|\cM_n|}{\left(\sum_{v \in [n]} d^n(v)\right)^2}
\rightarrow
\nu^2/4, \text{ and }
\frac{|\cN_n|}{\sum_{v \in [n]} d^n(v)}
\rightarrow
\eta.
\end{align*}
For the first of these three claims, we have
\begin{align*}
\frac{|\cL_n|}{\sum_{v \in [n]} d^n(v)} 
=
\frac{\frac{1}{2}\sum_{v \in [n]} d^n(v)(d^n(v)-1)}{\sum_{v \in [n]} d^n(v)}
&=
\frac{1}{2} \frac{\sum_{v \in [n]} (d^n(v))^2 - \sum_{v \in [n]} d^n(v)}{\sum_{v \in [n]} d^n(v)}\\
&=
\frac{1}{2} \left( \frac{\mu_2(p^n) - \mu_1(p^n)}{\mu_1(p^n)} \right)
\rightarrow
\nu/2.
\end{align*}
For the second, we have
\begin{align*}
& \frac{|\cM_n|}{\left(\sum_{v \in [n]} d^n(v)\right)^2}\\
= \ &
\frac{\frac{1}{2} \sum_{\{u < v : uv \notin e(h_n)\}} d^n(u) \left( d^n(u) - 1 \right) d^n(v) \left( d^n(v) - 1 \right)}{\left(\sum_{v \in [n]} d^n(v)\right)^2}\\
= \ &
\frac{1}{4 \left(\sum_{v \in [n]} d^n(v)\right)^2} \Bigg( \left[\sum_{v \in [n]} d^n(v) \left( d^n(v) - 1 \right)\right]^2 - \sum_{v \in [n]} \left[d^n(v) \left( d^n(v) - 1 \right) \right]^2\\ 
& \ \ \ \ \ \ \ \ \ \ \ \ \ \ \ \ \ \ \ \ \ \ \ \ \ \ \ \ \ \ \ \ \ \ \ \ \ - \sum_{uv \in e(h_n)} d^n(u) \left( d^n(u) - 1 \right) d^n(v) \left( d^n(v) - 1 \right) \Bigg).
\end{align*}
The second and third terms vanish in the limit since $\mu_1(p) > 0$ and so
\[
\left(\sum_{v \in [n]} d^n(v)\right)^2 = \Omega(n^2),
\]
and
\begin{align*}
\sum_{v \in [n]} \left[d^n(v) \left( d^n(v) - 1 \right) \right]^2
&\leq
(d^n_{\max})^2 \sum_{v \in [n]} (d^n(v))^2
=
(d^n_{\max})^2 n\mu_2(p^n)
=
o(n^2),
\end{align*}
and
\begin{align*}
\sum_{uv \in e(h_n)} d^n(u) \left( d^n(u) - 1 \right) d^n(v) \left( d^n(v) - 1 \right)
&\leq
(d^n_{\max})^2 \sum_{uv \in e(h_n)} d^n(u) d^n(v)\\
&=
(d^n_{\max})^2 |\cN_n|\\
&=
o(n^2).
\end{align*}
Therefore,
\begin{align*}
\frac{|\cM_n|}{\left(\sum_{v \in [n]} d^n(v)\right)^2}
&=
\frac{1}{4} \frac{\left[\sum_{v \in [n]} d^n(v) \left( d^n(v) - 1 \right)\right]^2}{\left(\sum_{v \in [n]} d^n(v)\right)^2} (1+o(1))\\
&\rightarrow
\frac{1}{4} \frac{\left(\mu_2(p) - \mu_1(p)\right)^2}{\left(\mu_1(p)\right)^2}\\
&=
\nu^2/4.
\end{align*}
For the third claim, we have
\begin{align*}
\frac{|\cN_n|}{\sum_{v \in [n]} d^n(v)}
=
\frac{\sum_{uv \in e(h_n)} d^n(u)d^n(v)}{\sum_{v \in [n]} d^n(v)}
&=
\frac{\sum_{i,j \geq 1} ij\alpha^n(i,j)}{\mu_1(p^n)}\\
&\rightarrow
\frac{\sum_{i,j \geq 1} ij\alpha(i,j)}{\mu_1(p)}
=
\eta.
\end{align*}

Therefore,
\begin{align} \label{putting everything together}
\nonumber
& \lim_{n \rightarrow \infty} \E{(L_n)_q (M_n)_r (N_n)_s} \\
= & 
\lim_{n \rightarrow \infty} \frac{(|\cL_n|)_q(|\cM_n|)_r(|\cN_n|)_s}{\prod_{i=0}^{q+2r+s-1} 2m - 1 - 2i} (1+o(1)) \\
\nonumber
= &
\lim_{n \rightarrow \infty} \left(\frac{|\cL_n|}{\sum_{v \in [n]} d^n(v)}\right)^q \left(\frac{|\cM_n|}{\left(\sum_{v \in [n]} d^n(v)\right)^2}\right)^r\left(\frac{|\cN_n|}{\sum_{v \in [n]} d^n(v)}\right)^s (1+o(1))\\
\nonumber
= &
\left( \nu/2 \right)^q \left( \nu^2/4 \right)^r \left( \eta \right)^s.
\end{align}
It then follows, by Theorem 2.6 of \cite{MR3617364}, that the random variables $L_n,M_n$ and $N_n$ converge to independent Poisson random variables with parameters $\nu/2,\nu^2/4$ and $\eta$ respectively. This proves the theorem in the case that $p(0)+p(1)<1$ and $\eta > 0$.

Lastly we will deal with the cases that arise if $p(0)+p(1) = 1$ or if $\eta = 0$. First, if $\eta = 0$ then $\lim_{n \rightarrow \infty} \sum_{i,j \geq 1} ij \alpha^n(i,j) = \sum_{i,j \geq 1} ij \alpha(i,j) = 0$. For any two half-edges $ui$ and $vj$ with $u,v \in [n]$, $i \in [d^n(u)]$ and $j \in [d^n(v)]$, we have $\p{\I{ui,vj}=1} = \frac{1}{2m-1}$ from the definition of the configuration model. So by $(\ref{expected value of LMN})$, since $m = m(n) = n\mu_1(p^n)/2 = \Theta(n)$, we have that 
\begin{align*}
\E{N_n} 
=
\sum_{uv \in e(h_n)} \sum_{i \in [d^n(u)]} \sum_{j \in [d^n(v)]} \p{\I{ui,vj}=1} 
&=
\frac{\sum_{uv \in e(h_n)} d^n(u)d^n(v)}{2m-1}\\
&=
\frac{n}{2m-1} \sum_{i,j \geq 1} ij \alpha^n(i,j)
=
o(1).
\end{align*}
Hence, $\lim_{n \rightarrow \infty} \E{N_n} = 0$. In this case, if $p(0)+p(1)<1$ then a reprise of the argument leading to (\ref{putting everything together}) gives that for all $q,r \geq 1$,
\[
\lim_{n \rightarrow \infty} \E{(L_n)_q (M_n)_r} = (\nu/2)^q(\nu^2/4)^r,
\]
and therefore that $L_n$ and $M_n$ converge to independent Poisson random variables with parameters $\nu/2$ and $\nu^2/4$ respectively.

Lastly, we deal with the case when $p(0)+p(1)=1$. In this case, $\mu_2(p) = \mu_1(p)$, so $\nu = 0$. Furthermore, 
\[
\E{L_n}
= 
\frac{\frac{1}{2} \sum_{v \in [n]} d^n(v)(d^n(v)-1)}{2m-1}
=
\frac{n}{4m-2}(\mu_2(p^n) - \mu_1(p^n)).
\]
Since $\mu_2(p^n) - \mu_1(p^n) \rightarrow \mu_2(p) - \mu_1(p) = 0$, we get that $\lim_{n \rightarrow \infty} \E{L_n} = 0$, and an analogous argument shows that $\lim_{n \rightarrow \infty} \E{M_n} = 0$. In this case, another reprise of the argument leading to (\ref{putting everything together}) gives that for all $s \geq 1$,
\[
\lim_{n \rightarrow \infty} \E{(N_n)_s} = \eta^s,
\]
so $N_n$ is asymptotically Poisson$(\eta)$ distributed. 
\end{proof}

\begin{proof}[Proof of Corollary~\ref{cor:simple_probability}]
First, the fact that $\sum_{k,\ell\ge 0} k\ell \cdot \alpha(k,\ell) < \infty$ appears in (\ref{eq:klsum_finite}), above,  from which it is immediate that $\eta < \infty$. 
Next, let $G_-(\dseq^n)$ be the subgraph of $G(\dseq^n)$ with edge set $\edges(G(\dseq^n))\setminus \edges(T(\dseq^n))$, and let $\dseq^n_-$ be the degree sequence of $G_-(\dseq^n)$,  as defined in Section~\ref{sec:concentration} previous to Proposition~\ref{prop:akl_conv}. 
Finally, write $L_n=L(G_-(\dseq^n))$, $M_n=M(G_-(\dseq^n),T(\dseq^n))$, and $N_n=N(G_-(\dseq^n),T(\dseq^n))$.  Our aim is to apply Theorem~\ref{Poisson Asymptotics},  with $h_n=T(\dseq^n)$ and $G_n=G_-(\dseq^n)=G(\dseq^n)-T(\dseq^n)$. Note that with these choices, we have $\alpha^n(k,l) = n^{-1}A^n(k,l)$, where $A^n(k,l)$ is as in Proposition~\ref{prop:akl_conv}.
Moreover, we have 
\[
\sum_{l \geq 0} A^n(k,l)
= \#\{u \in [n]: d^n_-(u)=k\}\, .
\]

Conditionally given $T(\dseq^n)$, the graph $G_-(\dseq^n)$ is a random graph with degree sequence $\dseq^n_-$. By Proposition~\ref{prop:akl_conv} we know that $\alpha^n(k,l) \convp \alpha(k,l)$ for all $k,l \ge 0$, and that 
\[
\sum_{k,l \geq 0} kl\alpha^n(k,l) \convp \sum_{k,l \geq 0} kl\alpha(k,l).
\]
Moreover, since $\alpha$ defines a probability distribution, it must be that for all k we have
\begin{equation}\label{eq:dminusformula}
p^n_-(k) := \frac{1}{n}\#\{u \in [n]: d^n_-(u)=k\}
= \frac{1}{n} \sum_{l\ge 0} A^n(k,l) = \sum_{l\ge 0} \alpha^n(k,l) \convp \sum_{l \geq 0} \alpha(k,l)\, . 
\end{equation}
Setting $p_-(k) = \sum_{l \geq 0} \alpha(k,l)$, then (\ref{eq:dminusformula}) states that $p^n_-(k) \convp p_-(k)$ for all $k \geq 0$. Moreover, since $p^n_-(k) \leq p^n(k)$, $p^n_-(k) \convp p_-(k)$, and $p^n(k) \convp p(k)$ for all $k \geq 0$, it follows that $\mu_2(p_-) \leq \mu_2(p) < \infty$ and $\mu_2(p^n_-) \convp \mu_2(p_-)$.
From these observations, Fact~\ref{fact:maxdeg} then implies that 
\[
\max_{v \in [n]}\mathrm{deg}_{T(\dseq^n)}(v)\le \max_{v \in [n]}\mathrm{deg}_{G_n}(v) = o(n^{1/2})\, .
\]
Since $\mu_2(p^n_-)\convp \mu_2(p_-)$, we also have $\mu_1(p^n_-) \convp \mu_1(p_-)$; but 
\[
n\mu_1(p^n_-) = \sum_{i=1}^n d^n_-(u) = \left(\sum_{i=1}^n d^n(u)\right) - 2(n-1)\, ,
\]
so $\mu_1(p_-)=\mu_1(p)-2$. Thus, if $\mu_1(p)>2$ then $\mu_1(p_-)>0$, so $p_-(0)<1$. In this case, applying Theorem~\ref{Poisson Asymptotics}, it follows that conditionally given $T(\dseq^n)$, 
\[
\|\mathrm{Dist}(L_n,M_n,N_n) - \mathrm{Poi}(\nu/2)\otimes 
\mathrm{Poi}(\nu^2/4)\otimes \mathrm{Poi}(\eta)\|_{\mathrm{TV}} \convp 0
\]
as $n \to \infty$. 
If $(L,M,N)$ is $\mathrm{Poi}(\nu/2)\otimes 
\mathrm{Poi}(\nu^2/4)\otimes \mathrm{Poi}(\eta)$-distributed, then we have $\p{L=M=N=0}=\exp(-\nu/2- \nu^2/4-\eta)$; 
since 
$G(\dseq^n)$ is simple if and only if $L_n=M_n=N_n=0$, it follows that 
\begin{align*} 
\probC{G(\dseq^n)~\mathrm{simple}} {T(\dseq^n)} 
& = \probC{L_n=M_n=N_n=0}{T(\dseq^n)}\\ 
& \convp \p{L=M=N=0}= \exp(-\nu/2-\nu^2/4-\eta)\, , 
\end{align*} 
as required. 

It remains to treat the case that $\mu_1(p)=2$, which implies that $\mu_1(p_-)=0$ and $p_-(0)=1$. This case requires a separate argument, which is more involved than one might expect. We note immediately that in this situation, $m=(1+o(1))n$ as $n \to \infty$. 

Write $G_n'$ for the graph obtained from $G_-(\dseq^n)$ by removing the edge $\Gamma(\dseq^n)$. Then let $L_n'=L(G_n')$, $ M_n'=M(G_n',T(\dseq^n))$, and $N_n'=N(G_n',T(\dseq^n))$. Then $G_n'$ is simple precisely if $L_n'+M_n'+N_n'=0$. We will first prove that $\E{L_n' + M_n' + N_n'} \convp 0$, then explain how to deal with the root edge.

By Proposition \ref{prop:tree_formula}, we know that the non-root half-edges chosen for $T(\dseq^n)$ form a uniformly random subset $\mathcal{S}^n$ of $\bigcup_{i=1}^n \{i1, \dots , i(d^n(i)-1)\}$ of size $n-1$. The half-edges which are paired to form $G_-(\dseq^n)$ are precisely the edges of 
$\mathcal{U}:= \bigcup_{i=1}^n \{i1, \dots , i(d^n(i)-1)\} \setminus \mathcal{S}^n$, together with the unique unpaired root half-edge of $T(\dseq^n)$. 

Write $\mathcal{U}$ for the set of half-edges paired to form $G_n'$. By the observations of the preceding paragraph, $\mathcal{U}$ is a uniformly random subset of $\bigcup_{i=1}^n \{i1, \dots , i(d^n(i)-1)\}$ of size $2(m-n)$. Moreover, conditionally given $\mathcal{U}$, the pairing of half-edges in $\mathcal{S}$ is uniformly random and independent of $T(\dseq^n)$. Therefore, we can construct $(G(\dseq^n),T(\dseq^n),\Lambda(\dseq^n))$ as follows. First sample a sequence of $m-n$ disjoint pairs of half-edges from $\bigcup_{i=1}^n \{i1, \dots , i(d^n(i)-1)\}$ uniformly at random and join them to form edges; this determines the set $\mathcal{U}$, and all edges of $G_n'$. Next, build $T(\dseq^n)$ via Pitman's additive coalescent applied to $\bigcup_{i=1}^n \{i1, \dots , i(d^n(i)-1)\} \setminus \mathcal{U}$. Finally, pair the root half-edge of $T^n$ with the sole remaining unpaired non-root half-edge. We analyze this construction procedure in order to bound the expected number of loops and multiple edges in $G_n'$.

Let $(h_1,h_2)$ be a half-edge pair chosen for $G_n'$ in the construction procedure just above. Then $h_1$ and $h_2$ are uniform random half-edges chosen from $\bigcup_{i=1}^n \{i1,\ldots,i(d^n(i)-1)\}$, and $(h_1,h_2)$ is a loop if $v(h_1) = v(h_2)$. Hence,
\begin{align*}
\E{L_n}
&=
\left| e(G_n') \right| \p{v(h_1) = v(h_2)}\\
&=
(m-n) \sum_{i=1}^n \p{v(h_1) = v(h_2) = i}\\
&=
(m-n) \sum_{i=1}^n \frac{(d^n(i)-1)(d^n(i)-2)}{(2m-(n-1))(2m-n)}.
\end{align*}
Similarly, edges $(h_1,h_2)$ and $(h_3,h_4)$ form a double edge if $v(h_1) = v(h_3)$ and $v(h_2) = v(h_4)$ or if $v(h_1) = v(h_4)$ and $v(h_2) = v(h_3)$. Hence,
\begin{align*}
& \E{M_n} \\
= \ & \left(\left| e(G_n') \right|\right) \left(\left| e(G_n')) \right| - 1 \right) \sum_{1 \leq i < j \leq n} \frac{4(d^n(i)-1)(d^n(i)-2)(d^n(j)-1)(d^n(j)-2)}{(2m-n+1))(2m-n)(2m-n-1)(2m-n-2)}\\
= \ &
(m-n) (m-n-1) \sum_{1 \leq i < j \leq n} \frac{4(d^n(i)-1)(d^n(i)-2)(d^n(j)-1)(d^n(j)-2)}{(2m-n+1)(2m-n)(2m-n-1)(2m-n-2)}\\
\leq \ &
2\left( (m-n) \sum_{i=1}^n \frac{(d^n(i)-1)(d^n(i)-2)}{(2m-n-1)(2m-n-2)} \right)^2.
\end{align*} 
Since $m=n+o(n)$, we have $\frac{m-n-1}{2m-n-2} = o(1)$. Since also $\sum_{i=1}^n (d^n(i)-1)(d^n(i)-2) \leq \sum_{i=1}^n d^n(i)^2 = O(n) = O(2m-n-1)$, it follows from the two preceding displayed inequalities that $\E{L_n' + M_n'} \convp 0$.

To show that $\E{N_n'} \convp 0$, we will again use Proposition \ref{prop:tree_formula}. Given a set $\cH \subseteq \bigcup_{i=1}^n \{i1,\ldots,i(d^n(i)-1)\}$, write $\cH_i = \cH \cap \{i1, \dots , i(d^n(i)-1)\}$. By (\ref{eq:exchangeability_1}) we know that for any set $\cH$ as above with $|\cH| = n-1$, for all $1 \leq i \leq n-1$, for any $h \in \cH$ and any root half-edge $r$ with $v(r) \neq v(h)$, 
\[
\Cprob{(r_i,s_i) = (r,h)}{\mathcal{S}^n = \cH} = \Cprob{(r_1,s_1) = (r,h)}{\mathcal{S}^n = \cH}.
\]
Moreover, by construction, $T(d^n)$ and $G_n'$ are conditionally independent given $\mathcal{S}^n$, so for any $1 \le i\le n-1$, 
\begin{align*}
& \Cexp{m_{G_n'}(v(r_i)v(s_i))}{\mathcal{S}^n = \cH}\\
= \ &
\Cexp{m_{G_n'}(v(r_1)v(s_1))}{\mathcal{S}^n = \cH}\\
= \ &
\sum_{j=1}^n \sum_{\substack{k=1 \\ k \neq j}}^n \Cprob{v(r_1) = j, v(s_1) = k}{\mathcal{S}^n = \cH} \cdot \Cexp{m_{G_n'}(jk)}{\mathcal{S}^n = \cH}\\.
\end{align*}
Now, by Proposition~\ref{prop:tree_formula}, 
\[
\Cprob{v(r_1) = j, v(s_1) = k}{\mathcal{S}^n = \cH} = 
\frac{\left| \cH_k \right|}{(n-1)^2}. 
\]
Also, 
\begin{align*}
\Cexp{m_{G_n'}(jk)} {\mathcal{S}^n = \cH}
&=
(m-n) \cdot \frac{(d^n(j)-1-|\cH_j|)(d^n(k)-1-|\cH_k|)}{(2m-2(n-1))(2m-2(n-1)-1)}. 
\end{align*} 
The term $m-n$ above accounts for the number of edges of $G_n'$; the fraction is the probability that a uniformly random pair of half-edges from $\left(\bigcup_{l=1}^n \{l1,\ldots,l(d^n(i)-1)\}\right) \setminus \cH $ are incident to vertices $j$ and $k$. Combining these formulas, we then have that for all $1 \le i \le n-1$, 
\begin{align*}
& \Cexp{m_{G_n'}(v(r_i)v(s_i))}{\mathcal{S}^n = \cH}\\
\leq \ & 
\sum_{j=1}^n \sum_{\substack{k=1 \\ k \neq j}}^n \frac{\left| \cH_k \right|}{(n-1)^2} \cdot (m-n) \cdot \frac{(d^n(j)-1-|\cH_j|)(d^n(k)-1-|\cH_k|)}{(2m-2(n-1))(2m-2(n-1)-1)}\\
= \ & O(1)\cdot 
\sum_{k=1}^n \frac{\left| \cH_k \right|}{(n-1)^2} \cdot \frac{(d^n(k)-1-|\cH_k|)}{(2(m-n)+1)} \sum_{\substack{j=1 \\ j \neq k}}^n (d^n(j)-1-|\cH_j|).
\end{align*}
Now, since $\sum_{j=1}^n |\cH_j| = |\cH|=n-1$, we have 
\begin{align*}
\sum_{\substack{j=1 \\ j \neq k}}^n (d^n(j)-1-|\cH_j|)
&=
2m - n - |\cH| - \left( d^n(k) - 1 - \left| \cH_k \right| \right)
\leq
2(m-n)-1\, ,
\end{align*}
and it follows that 
\begin{align*}
\Cexp{m_{G_n'}(v(r_i)v(s_i))}{\mathcal{S}^n = \cH}
=O(1)\cdot 
\sum_{k=1}^n \frac{\left| \cH_k \right| (d^n(k)-1-|\cH_k|)}{2(n-1)^2}.
\end{align*}
Taking expectation over $\mathcal{S}^n$ in this bound, it follows that 
\[
\E{m_{G_n'}(v(r_i)v(s_i))}
\leq
\frac{1}{(n-1)^2} \E{\sum_{k=1}^n \left| \cH_k \right| (d^n(k)-1-|\cH_k|)}.
\]
We now show that $\E{\sum_{k=1}^n \left| \cH_k \right| (d^n(k)-1-|\cH_k|)} = o(n)$. Notice that $\left| \cH_k \right| \leq d^n(k)$ and that $d^n(k)-1-|\cH_k| = d^n_-(k)$, unless $k$ is incident to the unpaired root half-edge, in which case $d^n(k)-1-|\cH_k| = d^n_-(k) - 1 \ge 0$. Letting $r$ be the unpaired root half-edge, it then follows that
\[
\sum_{k=1}^n \left| \cH_k \right| (d^n(k)-1-|\cH_k|) 
\leq
\sum_{k=1}^n \left(d^n(k)\right)^2 \I{d^n_-(k) > 0}.
\]
Since $|\{k: d^n_-(k)>0\}| \le m-n = o(n)$, by the second assertion of Fact~\ref{fact:maxdeg} it follows that $\sum_{k=1}^n d^n(k)^2\I{d^n_-(k) >0} = o(n)$, which combined with the two preceding inequalities yields that 
\[
\E{m_{G_n'}(v(r_i)v(s_i))} = o\left( \frac{1}{n} \right).
\]
Summing over $i$, it follows that 
\[
\E{N_n'} = \sum_{i=1}^n \E{m_{G_n'}(v(r_i)v(s_i))} = o(1). 
\]
At this point we know that $\E{L_n'+M_n'+N_n'} \to 0$. We now how to deduce the same for $L_n+M_n+N_n$. We provide full details only in the case that $m-n \to \infty$, i.e., that the number of edges of $G_-(\dseq^n)$ tends to infinity, and briefly explain the argument in the simpler case that $m-n=O(1)$.

By Proposition~\ref{eq:gtg_dist}, for any pair $(g,t)$ where $g$ is a graph with degree sequence $\dseq^n$ and $t$ is a spanning tree of $g$, we have 
\begin{equation}\label{eq:ignore_root}
\p{(G(\dseq^n),T(\dseq^n))=(g,t)} \propto  \frac{\sum_{\gamma \in e(g-t)}2^{\I{\gamma~\mathrm{is~a~loop}}}\cdot m_{g-t}(\gamma)}{\prod_{i=1}^n 2^{m_{g-t}(ii)} \cdot \prod_{e \in \edges(g)} m_{g-t}(e)!}\, ,
\end{equation}
and for any edge $\gamma$ of $g-t$, 
\[
\Cprob{\Gamma(\dseq^n)=\gamma}{(G(\dseq^n),T(\dseq^n))=(g,t)}
\propto 2^{\I{\gamma~\mathrm{is~a~loop}}}\cdot m_{g-t}(\gamma).
\]
If the graph with edge set $e(g)-(e(t) \cup \gamma)$ is simple, then $g-t$ has at most one loop, and no edge with multiplicity more than two, so 
\[
\sup_{e \in e(g-t)}\Cprob{\Gamma(\dseq^n)=e}{(G(\dseq^n),T(\dseq^n))=(g,t)} \le \frac{2}{|e(g-t)|}. 
\]
In particular, writing 
\[
\mathcal{B}(g,t) = \cL(g)\cup \cN(g,t) \cup \bigcup_{((ui_i,vj_1),(ui_2,vj_2))\in \cM(g,t)} \{(ui_i,vj_1),(ui_2,vj_2)\}\, ,
\]
and recalling that the half-edges comprising $\Gamma(\dseq^n)$ are $\Gamma^-(\dseq^n)$ and $\Gamma^+(\dseq^n)$, 
it follows that 
\begin{align*}
& \Cprob{(\Gamma^-(\dseq^n),\Gamma^+(\dseq^n)) \in \mathcal{B}(G_-(\dseq^n),T(\dseq^n))}{(G(\dseq^n),T(\dseq^n))}\\
& \le \frac{2|\mathcal{B}(G_-(\dseq^n),T(\dseq^n))|}{|e(G_-(\dseq^n))|}\I{G_n'~\mathrm{simple}} + \I{G_n'~\mathrm{not~simple}}\\
& \le \frac{4(L(G_-(\dseq^n))+N(G_-(\dseq^n),T(\dseq^n))+M(G_-(\dseq^n),T(\dseq^n)))}{|e(G_-(\dseq^n))|} \I{G_n'~\mathrm{simple}}+\I{G_n'~\mathrm{not~simple}}\\
& = \frac{4(L_n+M_n+N_n)}{m-(n-1)}+\I{G_n'~\mathrm{not~simple}}\, ;
\end{align*}
the last inequality is not tight unless $M(G_-(\dseq^n),T(\dseq^n))=0$. Now, 
\begin{equation}\label{eq:boostrap_simple}
L_n+M_n+N_n  \le 3(L_n'+M_n'+N_n'+ \I{(\Gamma^-(\dseq^n),\Gamma^+(\dseq^n)) \in \mathcal{B}(G_-(\dseq^n),T(\dseq^n))})\, ;
\end{equation}
this inequality is never tight but it suffices for our purposes. 
Using this bound, and taking expectations in the previous conditional probability bound, we obtain that 
\begin{align*} 
& \p{(\Gamma^-(\dseq^n),\Gamma^+(\dseq^n)) \in \mathcal{B}(G_-(\dseq^n),T(\dseq^n))} \\
& \le \frac{12}{m-(n-1)}\E{L_n'+M_n'+N_n'+1} + \p{G_n'~\mathrm{not~simple}}\, \\
& = \frac{12}{m-(n-1)}\E{L_n'+M_n'+N_n'+1} + \p{L_n'+M_n'+N_n' > 0}\, .
\end{align*}
Since $\E{L_n'+M_n'+N_n'} \to 0$, if $m-(n-1) \to \infty$ it follows that 
\[
\p{(\Gamma^-(\dseq^n),\Gamma^+(\dseq^n)) \in \mathcal{B}(G_-(\dseq^n),T(\dseq^n))} \convp 0
\]
in which case (\ref{eq:boostrap_simple}) and the fact that $L_n'+M_n'+N_n' \convp 0$ together imply that $L_n+M_n+N_n \convp 0$ as required. 

Finally, if $m-n=O(1)$, the fact that $L_n+M_n+N_n \convp 0$ can be seen as follows. Let $h$ be a uniformly random half-edge from $\bigcup_{1 \le i \le n} \{i1,\ldots,i(d^n(i)-1)\}$. 
Then with high probability, $d^n(v(h))=O(1)$. Since $|\cU|=|\bigcup_{1 \le i \le n} \{i1,\ldots,i(d^n(i)-1)\}\setminus \mathcal{S}^n|=2(m-n)=O(1)$, it follows that with high probability $d^n(v(h))=O(1)$ for all edges of $\bigcup_{1 \le i \le n} \{i1,\ldots,i(d^n(i)-1)\}\setminus \mathcal{H}$ and that $v(h)\ne v(h')$ for all distinct $h,h' \in \cU$. This already implies that $L_n+M_n=0$ with probability $1-o(1)$. 
Finally, by considering Pitman's additive coalescent it is not hard to see that for any vertex $v$ with $d^n(v)=O(1)$, the probability that $v$ is the root of $T^n(v)$ or is adjacent to the root is $o(1)$, and that for any two vertices $v,w$ with $d^n(v)=O(1)$ and $d^n(w)=O(1)$, the probability that $v$ and $w$ are adjacent is $o(1)$. (Verifying the assertions of the last sentence in detail is left to the reader.) This immediately implies that $N_n=0$ with probability $1-o(1)$ and so completes the proof. 
\end{proof}

\section*{Acknowledgements} 
Both authors thank Serte Donderwinkel and an anonymous referee for a very careful reading of the paper and for several useful comments.

\small


\begin{thebibliography}{20}
\providecommand{\natexlab}[1]{#1}
\providecommand{\url}[1]{\texttt{#1}}
\expandafter\ifx\csname urlstyle\endcsname\relax
  \providecommand{\doi}[1]{doi: #1}\else
  \providecommand{\doi}{doi: \begingroup \urlstyle{rm}\Url}\fi

\bibitem[Abraham et~al.(2013)Abraham, Delmas, and Hoscheit]{MR3035742}
Romain Abraham, Jean-Fran\c{c}ois Delmas, and Patrick Hoscheit.
\newblock A note on the {G}romov-{H}ausdorff-{P}rokhorov distance between
  (locally) compact metric measure spaces.
\newblock \emph{Electron. J. Probab.}, 18:\penalty0 no. 14, 21, 2013.
\newblock \doi{10.1214/EJP.v18-2116}.
\newblock URL \url{https://arxiv.org/abs/1202.5464}.

\bibitem[Berestycki et~al.(2017)Berestycki, Laslier, and Ray]{MR3681382}
Nathana\"{e}l Berestycki, Beno\^{\i}t Laslier, and Gourab Ray.
\newblock Critical exponents on {F}ortuin-{K}asteleyn weighted planar maps.
\newblock \emph{Comm. Math. Phys.}, 355\penalty0 (2):\penalty0 427--462, 2017.
\newblock ISSN 0010-3616.
\newblock \doi{10.1007/s00220-017-2933-7}.

\bibitem[Bernardi(2007)]{MR2285813}
Olivier Bernardi.
\newblock Bijective counting of tree-rooted maps and shuffles of parenthesis
  systems.
\newblock \emph{Electron. J. Combin.}, 14\penalty0 (1):\penalty0 Research Paper
  9, 36, 2007.
\newblock URL
  \url{http://www.combinatorics.org/Volume_14/Abstracts/v14i1r9.html}.

\bibitem[Bernardi(2008)]{MR2438581}
Olivier Bernardi.
\newblock Tutte polynomial, subgraphs, orientations and sandpile model: new
  connections via embeddings.
\newblock \emph{Electron. J. Combin.}, 15\penalty0 (1):\penalty0 Research Paper
  109, 53, 2008.
\newblock URL
  \url{http://www.combinatorics.org/Volume_15/Abstracts/v15i1r109.html}.

\bibitem[Bousquet-M\'{e}lou and Courtiel(2015)]{MR3366469}
Mireille Bousquet-M\'{e}lou and Julien Courtiel.
\newblock Spanning forests in regular planar maps.
\newblock \emph{J. Combin. Theory Ser. A}, 135:\penalty0 1--59, 2015.
\newblock ISSN 0097-3165.
\newblock \doi{10.1016/j.jcta.2015.04.002}.
\newblock URL \url{https://arxiv.org/abs/1306.4536}.

\bibitem[Broutin and Marckert(2014)]{MR3188597}
Nicolas Broutin and Jean-Fran\c{c}ois Marckert.
\newblock Asymptotics of trees with a prescribed degree sequence and
  applications.
\newblock \emph{Random Structures Algorithms}, 44\penalty0 (3):\penalty0
  290--316, 2014.
\newblock ISSN 1042-9832.
\newblock \doi{10.1002/rsa.20463}.
\newblock URL \url{https://arxiv.org/abs/1110.5203}.

\bibitem[Ewain~Gwynne and Sheffield(2017)]{gwynne2017}
Jason~Miller Ewain~Gwynne and Scott Sheffield.
\newblock The {T}utte embedding of the mated-{CRT} map converges to {L}iouville
  quantum gravity.
\newblock arXiv:1705.11161 [math.PR], May 2017.
\newblock URL \url{https://arxiv.org/abs/1705.11161}.

\bibitem[Greenhill et~al.(2014)Greenhill, Kwan, and Wind]{MR3177540}
Catherine Greenhill, Matthew Kwan, and David Wind.
\newblock On the number of spanning trees in random regular graphs.
\newblock \emph{Electron. J. Combin.}, 21\penalty0 (1):\penalty0 Paper 1.45,
  26, 2014.
\newblock \doi{10.37236/3752}.
\newblock URL
  \url{https://www.combinatorics.org/ojs/index.php/eljc/article/view/v21i1p45}.

\bibitem[Greenhill et~al.(2017)Greenhill, Isaev, Kwan, and McKay]{MR3645782}
Catherine Greenhill, Mikhail Isaev, Matthew Kwan, and Brendan~D. McKay.
\newblock The average number of spanning trees in sparse graphs with given
  degrees.
\newblock \emph{European J. Combin.}, 63:\penalty0 6--25, 2017.
\newblock ISSN 0195-6698.
\newblock \doi{10.1016/j.ejc.2017.02.003}.
\newblock URL \url{https://arxiv.org/abs/1606.01586}.

\bibitem[Gwynne et~al.(2018)Gwynne, Kassel, Miller, and Wilson]{MR3778352}
Ewain Gwynne, Adrien Kassel, Jason Miller, and David~B. Wilson.
\newblock Active spanning trees with bending energy on planar maps and
  {SLE}-decorated {L}iouville quantum gravity for {$\kappa > 8$}.
\newblock \emph{Comm. Math. Phys.}, 358\penalty0 (3):\penalty0 1065--1115,
  2018.
\newblock ISSN 0010-3616.
\newblock \doi{10.1007/s00220-018-3104-1}.
\newblock URL \url{https://arxiv.org/abs/1603.09722}.

\bibitem[Gwynne et~al.(2020)Gwynne, Holden, and Sun]{MR4126936}
Ewain Gwynne, Nina Holden, and Xin Sun.
\newblock A mating-of-trees approach for graph distances in random planar maps.
\newblock \emph{Probab. Theory Related Fields}, 177\penalty0 (3-4):\penalty0
  1043--1102, 2020.
\newblock ISSN 0178-8051.
\newblock \doi{10.1007/s00440-020-00969-8}.
\newblock URL \url{https://arxiv.org/abs/1711.00723}.

\bibitem[Holden and Sun(2018)]{MR3861296}
Nina Holden and Xin Sun.
\newblock S{LE} as a mating of trees in {E}uclidean geometry.
\newblock \emph{Comm. Math. Phys.}, 364\penalty0 (1):\penalty0 171--201, 2018.
\newblock ISSN 0010-3616.
\newblock \doi{10.1007/s00220-018-3149-1}.
\newblock URL \url{https://arxiv.org/abs/1610.05272}.

\bibitem[Li et~al.(2017)Li, Sun, and Watson]{li2017}
Yiting Li, Xin Sun, and Samuel~S. Watson.
\newblock Schnyder woods, {SLE}(16), and {L}iouville quantum gravity.
\newblock arXiv:1705.03573, May 2017.
\newblock URL \url{https://arxiv.org/abs/1705.03573}.

\bibitem[McKay(1981)]{MR657198}
Brendan~D. McKay.
\newblock Spanning trees in random regular graphs.
\newblock In \emph{Proceedings of the {T}hird {C}aribbean {C}onference on
  {C}ombinatorics and {C}omputing ({B}ridgetown, 1981)}, pages 139--143. Univ.
  West Indies, Cave Hill Campus, Barbados, 1981.

\bibitem[Miller and Sheffield(2019)]{MR4010949}
Jason Miller and Scott Sheffield.
\newblock Liouville quantum gravity spheres as matings of finite-diameter
  trees.
\newblock \emph{Ann. Inst. Henri Poincar\'{e} Probab. Stat.}, 55\penalty0
  (3):\penalty0 1712--1750, 2019.
\newblock ISSN 0246-0203.
\newblock \doi{10.1214/18-aihp932}.
\newblock URL \url{https://arxiv.org/abs/1506.03804}.

\bibitem[Moon(1970)]{MR0274333}
J. W. Moon.
\newblock \emph{Counting labelled trees}, volume 1969 of \emph{From lectures
  delivered to the Twelfth Biennial Seminar of the Canadian Mathematical
  Congress (Vancouver}.
\newblock Canadian Mathematical Congress, Montreal, Que., 1970.

\bibitem[Mullin(1967)]{MR205882}
R. C. Mullin.
\newblock On the enumeration of tree-rooted maps.
\newblock \emph{Canadian J. Math.}, 19:\penalty0 174--183, 1967.
\newblock ISSN 0008-414X.
\newblock \doi{10.4153/CJM-1967-010-x}.
\newblock URL \url{https://doi.org/10.4153/CJM-1967-010-x}.

\bibitem[Pitman(1999)]{MR1673928}
Jim Pitman.
\newblock Coalescent random forests.
\newblock \emph{J. Combin. Theory Ser. A}, 85\penalty0 (2):\penalty0 165--193,
  1999.
\newblock ISSN 0097-3165.
\newblock \doi{10.1006/jcta.1998.2919}.
\newblock URL \url{https://www.stat.berkeley.edu/~pitman/457.pdf}.

\bibitem[van~der Hofstad(2017)]{MR3617364}
Remco van~der Hofstad.
\newblock \emph{Random graphs and complex networks. {V}ol. 1}.
\newblock Cambridge Series in Statistical and Probabilistic Mathematics, [43].
  Cambridge University Press, Cambridge, 2017.
\newblock ISBN 978-1-107-17287-6.
\newblock \doi{10.1017/9781316779422}.
\newblock URL \url{https://www.win.tue.nl/~rhofstad/NotesRGCN.pdf}.

\bibitem[Walsh and Lehman(1972)]{MR314687}
T. Walsh and A.~B. Lehman.
\newblock Counting rooted maps by genus. {II}.
\newblock \emph{J. Combinatorial Theory Ser. B}, 13:\penalty0 122--141, 1972.
\newblock ISSN 0095-8956.
\newblock \doi{10.1016/0095-8956(72)90049-4}.
\newblock URL
  \url{https://www.sciencedirect.com/science/article/pii/0095895672900494}.

\end{thebibliography}
\end{document}